\definecolor{Burgundy1}{RGB}{128,0,32}
\crefname{equation}{}{}
\numberwithin{equation}{section}
\newtheorem{theorem}[equation]{Theorem}
\newtheorem{lemma}[equation]{Lemma}
\newtheorem{cor}[equation]{Corollary}
\Crefname{cor}{Corollary}{Corollaries}
\newtheorem{conjecture}[equation]{Conjecture}
\newtheorem{proposition}[equation]{Proposition}
\theoremstyle{remark}
\newtheorem{remark}[equation]{Remark}
\theoremstyle{definition}
\newtheorem{example}[equation]{Example}
\theoremstyle{definition}
\theoremstyle{definition}
\newtheorem{defi}[equation]{Definition}
\theoremstyle{definition}
\newtheorem{notation}[equation]{Notation}
\theoremstyle{definition}
\newtheorem{convention}[equation]{Convention}
\theoremstyle{definition}
\newtheorem{assumption}[equation]{Assumption}
\theoremstyle{definition}
\newtheorem{caution}[equation]{Caution}
\DeclareSymbolFont{cyrletters}{OT2}{wncyr}{m}{n}
\DeclareMathSymbol{\Sha}{\mathalpha}{cyrletters}{"58}
\def\K{\ensuremath K}
\def\L{\ensuremath L}
\def\W{\ensuremath\mathcal{W}}
\def\F{\ensuremath\mathbb{F}}
\def\Q{\ensuremath\mathbb{Q}}
\def\N{\ensuremath N_{L/K}}
\def\c{\ensuremath \mathfrak{s}}
\def\s{\ensuremath \mathfrak{s}}
\def\cR{\ensuremath \mathcal{R}}
\def\Z{\ensuremath \mathbb{Z}}
\def\ub{\"ubereven}
\address{School of Mathematics and Statistics, University of Glasgow, University Place, Glasgow, G12 8QQ.}
\email{ajmorgan44@gmail.com}
\subjclass[2010]{11G40 (11G10, 11G20, 11G30, 14G10, 14K15)}
\begin{document}

\title{2-Selmer parity for hyperelliptic curves in quadratic extensions}

\author{Adam Morgan}
\setcounter{tocdepth}{1}
\maketitle

\begin{abstract}
We study the 2-parity conjecture for Jacobians of hyperelliptic curves over number fields. Under some mild assumptions on their reduction, we prove the conjecture over quadratic extensions of the base field. The proof proceeds via a generalisation of a formula of Kramer and Tunnell relating local invariants of the curve,  which may be of independent interest. A new feature of this generalisation is the appearance of terms which govern whether or not the Cassels--Tate pairing on the Jacobian is alternating, which first appeared in work of Poonen--Stoll. We establish the local formula in many instances and show that in  remaining cases it follows from standard global conjectures.
\end{abstract}

\tableofcontents
\vspace{-10pt}
\section{Introduction} 
Let $K$ be a number field and $A/K$ an abelian variety. Conjecturally, the corresponding completed $L$-function of $A/K$, $L^\star(A/K,s)$,  has an analytic continuation to the whole of the complex plane and satisfies a functional equation  
\[L^\star(A/K,s)=w(A/K)L^\star(A/K,2-s),\]
where $w(A/K)\in\{\pm 1\}$ is the global root number of $A/K$. The Birch and Swinnerton-Dyer conjecture asserts that the Mordell--Weil rank of $A/K$ agrees with the order of vanishing at $s=1$ of $L^\star(A/K,s)$:
\[\text{ord}_{s=1} L^\star(A/K,s){=}\text{rk}(A/K).\]
If $w(A/K)=1$ (resp. $-1$), then $L^\star(A/K,s)$ is an even (resp. odd) function around $s=1$ and as such its order of vanishing there is even (resp. odd). Thus a consequence of the Birch and Swinnerton-Dyer conjecture is the parity conjecture:
\[w(A/K){=}(-1)^{\text{rk}(A/K)}.\]

Essentially all progress towards the parity conjecture has proceeded via the $p$-parity conjecture. For a fixed prime $p$, denote by $\text{rk}_p(A/K)$ the $p$-infinity Selmer rank of $A/K$. Under the conjectural finiteness of the Shafarevich--Tate group (or indeed, under the weaker assumption that its $p$-primary part is finite), $\text{rk}_p(A/K)$ agrees with $\text{rk}(A/K)$. The $p$-parity conjecture is the assertion that \[w(A/K)=(-1)^{\text{rk}_p(A/K)}.\] Note that without knowing finiteness of the Shafarevich--Tate group, these conjectures are inequivalent for different primes $p$.

\subsection{Known cases of the $p$-parity conjecture}  
For  elliptic curves over $\mathbb{Q}$, Dokchitser--Dokchitser \cite{MR2680426} have shown that the $p$-parity conjecture holds for all primes $p$. Subsequently, Nekov{\'a}{\v{r}} \cite{MR3101073}  extended this result to all totally real number fields, excluding some elliptic curves with potential complex multiplication; these exceptional cases have recently been treated by Green--Maistret \cite{GM21}. For a general number field $K$, \v{C}esnavi\v{c}ius \cite{MR3552491} has shown that the $p$-parity conjecture holds for elliptic curves over $K$ possessing a $p$-isogeny, whilst work of Kramer--Tunnell \cite{MR664648} and Dokchitser--Dokchitser \cite{MR2831512} proves that the $2$-parity conjecture holds for an arbitrary elliptic curve $E/K$, not over $K$ itself, but over any quadratic extension of $K$.

For higher dimensional abelian varieties much less is known. The most general result at present is due to Coates, Fukaya, Kato and Sujatha, who prove in \cite{MR2551757} that for odd primes $p$, the $p$-parity conjecture holds for any abelian variety possessing a suitable $p$-power degree isogeny, subject to some further technical conditions. For $p=2$ the main result is due to  Dokchitser--Maistret \cite{DM19}, who prove the $2$-parity conjecture for quite general semistable abelian surfaces. 

\subsection{Main result}
Following on from the work of Kramer--Tunnell and Dokchitser--Dokchitser for elliptic curves, we consider the $2$-parity conjecture for Jacobians of hyperelliptic curves over quadratic extensions of their field of definition. Our main result is the following: 
   
\begin{theorem} \label{cases of the parity conjecture}
Let $K$ be a number field and $L/K$ a quadratic extension. Let $C/K$ be a hyperelliptic curve of genus $g\geq 2$ and let $J/K$ be the Jacobian of $C$. Suppose that $J$ has semistable reduction at each prime $\mathfrak{p}\nmid 2$ of $K$ which ramifies in $L/K$, and assume moreover that:

\begin{itemize}
\item for each prime $\mathfrak{p}\mid 2$ of $K$ which is inert in $L/K$, $J$ has good reduction at $\mathfrak{p}$,
\item for each prime $\mathfrak{p}\mid 2$ of $K$ which ramifies in $L/K$, $J$ has good ordinary reduction at $\mathfrak{p}$ and  $K_\mathfrak{p}(J[2])/K_\mathfrak{p}$ has odd degree.  
\end{itemize}
Then the $2$-parity conjecture holds for $J/L$. 
\end{theorem}

\begin{remark}
 \Cref{cases of the parity conjecture} gives a large supply of hyperelliptic curves satisfying the $2$-parity conjecture over every quadratic extension of their field of definition; see \Cref{explicit ordinary cor} for explicit conditions on a Weierstrass equation defining $C$ that ensure the conditions of \Cref{cases of the parity conjecture} at primes dividing $2$  are satisfied.
 \end{remark}
 
 \begin{remark}
If the genus of $C$ is $2$ then one can weaken the assumption that $J$ has good reduction at each inert prime dividing 2 to assume only that $J$ has semistable reduction at such primes; see \Cref{prop:unram_cases_of_conjecture}. 
\end{remark}
 
 \subsection{Reduction to a local question} \label{subsec:red_to_local}
 The proof of \Cref{cases of the parity conjecture} proceeds by reducing to a purely local question, as we now explain. 
 
In the notation of \Cref{cases of the parity conjecture},  for each place $v$ of $K$ which is non-split in $L$, denote by $\mathfrak{v}$ the unique place of $L$ extending $v$. Since $J$ is defined over $K$, the root number $w(J/L)$ decomposes as a product of local terms indexed by places of $K$ which are non-split in $L/K$:
\begin{equation} \label{root_number_decomp_intro}
w(J/L)=\prod_{\substack{v\textup{ place of }K\\ v\textup{ non-split in }L}} w(J/L_\mathfrak{v}),
\end{equation}
where $w(J/L_\mathfrak{v})\in \{\pm 1\}$ is the local root number of $J/L_\mathfrak{v}$.
The strategy to prove \Cref{cases of the parity conjecture} is to similarly decompose the parity of the $2$-infinity Selmer rank of $J$ over $L$ into local terms, and compare these place by place. Specifically, results of \cite{MR3951582} combined with work of Poonen--Stoll \cite{MR1740984} give a decomposition of the parity of $\text{rk}_{2}(J/L)$ into local terms as detailed below, generalising a theorem of Kramer \cite[Theorem 1]{MR597871} for elliptic curves. Before stating this decomposition we need to introduce some notation.

\begin{notation} \label{intro_notat_nrm}
 For each place $v$ of $K$ which does not split in $L$, define the \textit{local norm map}  
\[N_{L_{\mathfrak{v}}/K_v}:J(L_{\mathfrak{v}})\rightarrow J(K_{v})\] sending $P\in J(L_\mathfrak{v})$ to 
\[
  N_{L_{\mathfrak{v}}/K_v}(P)=\sum_{\sigma\in\text{Gal}(L_{\mathfrak{v}}/K_{v})}\sigma(P).
\]
Note that, as a quotient of $J(K_v)/2J(K_v)$, the cokernel of this map is a finite dimensional $\mathbb{F}_2$-vector space. 

Define also the invariant $\epsilon(C/K_v)\in \{0,1\}$  by setting
\[\epsilon(C/K_v)=\begin{cases}1~~&~~C/K_v \textup{ is deficient},\\ 0~~&~~\textup{otherwise.} \end{cases}\]
Here, following \cite[Section 8]{MR1740984}, we say that $C/K_v$ is deficient if $C$ has no $K_v$-rational divisor of degree $g-1$.
\end{notation}

The relevance of the invariant $\epsilon(C/K_v)$ comes from a result of Poonen and Stoll \cite[Theorem 8]{MR1740984} characterising the failure of the Shafarevich--Tate group of $J/K$ to have square order (if finite) in terms of the $\epsilon(C/K_v)$. Denoting by $C^L/K$ the quadratic twist of $C$ by $L$, we define $\epsilon(C^L/K_v)$ similarly. We then have the following decomposition of the parity of $\textup{rk}_2(J/L)$ into local terms:

\begin{theorem}[=\Cref{selmer decomposition}] \label{selmer decomposition_intro}
We have
\[
 (-1)^{\textup{rk}_{2}(J/L)}=\prod_{\substack{v\textup{ place of }K\\v\textup{ non-split in }L}}(-1)^{\epsilon(C/K_v)+\epsilon(C^L/K_v)+\dim J(K_{v})/N_{L_\mathfrak{v}/K_v}J(L_{\mathfrak{v}})}.\]
\end{theorem}

Ideally, one might hope that the local terms contributing to $w(J/L)$ and $(-1)^{\text{rk}_{2}(J/L)}$ simply agree place by place. However, this turns out not to be the case, and so the strategy hinges on identifying the discrepancy between these local terms as a quantity which vanishes globally. To this end we conjecture the following, generalising a formula of  Kramer--Tunnell \cite{MR664648} for elliptic curves:

\begin{conjecture} \label{Kramer Tunnell}  
Let $K$ be a local field of characteristic different from $2$. Let $L/K$ be a quadratic extension, let $C/K$ be a hyperelliptic curve, and denote by $J/K$ the Jacobian of $C$. Then we have
\[
w(J/L)=(\Delta_C,L/K)(-1)^{\epsilon(C/K)+\epsilon(C^L/K)+\dim J(K)/\N J(L)}.
\]
\end{conjecture} 
Here the quantity $\Delta_C$ is the discriminant of $f(x)$ for any Weierstrass equation $y^2=f(x)$ defining $C$, and $ (\Delta_C,L/K)\in \{\pm 1\}$ is the Hilbert/Artin symbol of $\Delta_C$ with respect to the extension $L/K$.\footnote{Given another Weierstrass equation $y^2=h(x)$ for $C$, the discriminants of $f(x)$ and $h(x)$   differ by a square in $K$, hence the term $(\Delta_C,L/K)$ is independent of the choice of Weierstrass equation.}
 
Returning now to the case where $L/K$ is a quadratic extension of number fields and $C$ is a hyperelliptic curve defined over $K$, by the product formula for Hilbert symbols we have 
\[\prod_{\substack{v\textup{ place of }K\\v\textup{ non-split in }L}}(\Delta_C,L_\mathfrak{v}/K_v)=1.\]
 In particular, we see from \eqref{root_number_decomp_intro} and \Cref{selmer decomposition_intro} that \Cref{Kramer Tunnell} implies the 2-parity conjecture for $J/L$. We will prove \Cref{Kramer Tunnell} under the assumptions on the reduction of $C$ appearing in the statement of \Cref{cases of the parity conjecture}, hence proving that result. Specifically, our second main result is the following:

\begin{theorem} \label{thm:cases_of_kramer_tunnell}
\Cref{Kramer Tunnell} holds in the following cases:
\begin{itemize}
\item $K=\mathbb{R}$,
\item $K$ has odd residue characteristic, and either $L/K$ is unramified or $J/K$ has semistable reduction,
\item $K$ is a finite extension of $\mathbb{Q}_2$, $L/K$ is unramified, and either $J/K$ has good reduction or $g=2$ and $J/K$ has semistable reduction,
\item $K$ is a finite extension of $\mathbb{Q}_2$,  $J/K$ has good ordinary reduction, and $K(J[2])/K$ has odd degree. 
\end{itemize}
\end{theorem}

\begin{remark}
More generally,  \Cref{Kramer Tunnell} holds if there is an odd degree Galois extension $F/K$ over which $C$ satisfies the conditions of \Cref{thm:cases_of_kramer_tunnell} with $L/K$ replaced by $FL/F$; see \Cref{compatibility results}.
\end{remark}

As further evidence for \Cref{Kramer Tunnell}, we show that the  cases above (and in fact substantially fewer) are sufficient to deduce \Cref{Kramer Tunnell} from the 2-parity conjecture via a global-to-local argument, at least for curves arising via base-change from a number field. 

\begin{theorem}[=\Cref{global to local}] \label{global to local_intro}
Let $K$ be a number field, $C/K$ a hyperelliptic curve, $J/K$ its Jacobian, and $v_0$ a place of $K$. If the $2$-parity conjecture holds for $J$ over every quadratic extension of $K$,   then \Cref{Kramer Tunnell} holds for $J/K_{v_0}$ and every quadratic extension $L/K_{v_0}$. 
\end{theorem}      

\begin{remark} \label{genus_0_KT}
We remark that \Cref{Kramer Tunnell} makes sense (and, surprisingly, is not entirely vacuous) in genus 0. Indeed, for a quadratic extension $L/K$ of local fields of characteristic different from $2$, consider a hyperelliptic curve $C:y^2=f(x)$ where $f(x)\in K[x]$ is a squarefree polynomial of degree $1$ or $2$. The Jacobian of $C$ is trivial, so the root number  and cokernel of the local norm map are trivial also. Further, $C/K$ (resp. $C^L/K$) is deficient if and only if it has no $K$-point. It is then easy to check that $(\Delta_C,L/K)=(-1)^{\epsilon(C/K)+\epsilon(C^L/K)}$ for any quadratic extension $L/K$. 
\end{remark}

\subsection{Comparison with work of Kramer--Tunnell}
\Cref{Kramer Tunnell}   has its origins in work of Kramer--Tunnell. Specifically, for a local field $K$, a separable quadratic extension $L/K$, and an elliptic curve $E/K$, Kramer--Tunnell \cite{MR664648} conjectured  the formula
\begin{equation}\label{eq:original_kramer_tunnell}
w(E/K)w(E^L/K)=(-\Delta_E,L/K)(-1)^{\dim E(K)/N_{L/K}E(L)},
\end{equation}
and proved it in many cases, including in every instance when $K$ has odd residue characteristic. This conjecture is now known in all cases thanks to subsequent work of Dokchitser--Dokchitser \cite{MR2831512} and \v{C}esnavi\v{c}ius--Imai \cite{MR3577895}.

By \cite[Proposition 3.11]{MR3552491} we have 
\[w(E/L)=w(E/K)w(E^L/K)(-1,L/K),\]
whilst $\epsilon(E/K)=0$ for every local field $K$ and elliptic curve $E/K$. Thus \Cref{Kramer Tunnell} specialises to \Cref{eq:original_kramer_tunnell} when $C/K$ is an elliptic curve. 

The presence of the new terms $\epsilon(C/K)$ and $\epsilon(C^L/K)$ in the purely local \Cref{Kramer Tunnell},  which are `forced' by global considerations concerning the possible failure of the Shafarevich--Tate group of a   principally polarised abelian variety   to have square order (see \Cref{2-Selmer Groups in Quadratic Extensions}), is a key new feature of this work. These terms also place constraints on possible proofs of \Cref{Kramer Tunnell}. Indeed,   $\epsilon(C/K)$ is not a function purely of the Jacobian of $C$ (as in \Cref{genus_0_KT}, $\epsilon(C/K)$ can be non-trivial even for curves of genus $0$!).  
  A lot of the technical difficulty in this work is involved in relating invariants defined in terms of the Jacobian of $C$,  such as the cokernel of the local norm map, to the invariants $\epsilon(C/K)$, $\epsilon(C^L/K)$ and $(\Delta_C,L/K)$, which have no obvious meaning for general abelian varieties. 
 
As above, the Kramer--Tunnell formula \eqref{eq:original_kramer_tunnell} is known to hold for local fields of characteristic $2$ and separable quadratic extensions $L/K$. It is thus tempting to extend the scope of \Cref{Kramer Tunnell} to include such extensions (especially in light of the  work of  \v{C}esnavi\v{c}ius--Imai \cite{MR3577895} who reduce \eqref{eq:original_kramer_tunnell} over local fields of characteristic $2$ to the corresponding conjecture for finite extensions of $\mathbb{Q}_2$). However, since we prove no instances of \Cref{Kramer Tunnell} over local fields of characteristic $2$ in this work, we have elected not to do this. 

\subsection{Overview of the paper}
 In \Cref{2-Selmer Groups in Quadratic Extensions} we explain how to deduce \Cref{selmer decomposition_intro} by combining results of \cite{MR3951582} with work of Poonen--Stoll \cite{MR1740984}. 

In \Cref{local norm section} we recall and prove some basic properties of the local norm map for general abelian varieties. Of particular use later is \Cref{norm map as Tamagawa numbers} which, for nonarchimedean local fields of odd residue characteristic, expresses the order of the cokernel of the local norm map in terms of Tamagawa numbers, generalising a result of Kramer--Tunnell  \cite[Corollary 7.6]{MR664648} for elliptic curves.

In \Cref{compatibility results} we prove some compatibility results concerning the behaviour of  \Cref{Kramer Tunnell} under quadratic twist, and under odd-degree Galois extension of the base field. 

Across  \Cref{2-tors sect,deficiency section} we collect and prove some basic results concerning, respectively, $2$-torsion in Jacobians of hyperelliptic curves, and criteria for determining when a hyperelliptic curve over a local field $K$ is deficient. Whilst much of this material is standard, \Cref{deficiency lemma}, which characterises deficiency for a particular class of hyperelliptic curves (essentially those with a $K$-rational theta characteristic) may be of independent interest.  

In \Cref{first cases} we combine the results of  \Cref{2-tors sect,deficiency section} to deduce some simple cases of \Cref{Kramer Tunnell}. Namely, we establish \Cref{Kramer Tunnell} when $K$ is nonarchimedean, and when  $K$ has odd residue characteristic and $J/K$ has good reduction. Then in \Cref{global to local section} we show that these special cases are already enough to deduce  \Cref{global to local_intro}.

With the exception of the short Sections \ref{residue characteristic 2} and \ref{sec:main_thm_proofs} (which, respectively, consider \Cref{Kramer Tunnell} for finite extensions of $\mathbb{Q}_2$, and tie together results from previous sections to prove \Cref{cases of the parity conjecture,thm:cases_of_kramer_tunnell}), the remainder of the paper splits into $2$ parts. Firstly, in \Cref{main unramified section,proof of compatibility} we consider \Cref{Kramer Tunnell}  when the extension $L/K$ is unramified,  proving it completely in this case when $K$ has odd residue characteristic. We do this by analysing the minimal proper regular model of $C$. 
The key fact making \Cref{Kramer Tunnell}  accessible here is that the formation of the minimal regular model commutes with unramified base change; this enables a comparison between invariants of $C$ and those of its unramified quadratic twist. 
 The central technical result of these sections is \Cref{homomorphism theorem}, which we formulate for general curves, and which shows that the quantity 
\[2^{\epsilon(C/K)}\frac{|\Phi(\bar{k})|}{|\Phi(k)|},\]
viewed as an element of $\mathbb{Q}^{\times}/\mathbb{Q}^{\times 2}$, 
behaves well under quite general twisting. Here $k$ is the residue field of $K$ and $\Phi$ is the N\'{e}ron component group  of the Jacobian of $C$. 
We would also like to advertise \Cref{tam computations}, which is a by-product of the proof of \Cref{homomorphism theorem}, and which gives a relatively simple way of computing the Tamagawa number of the Jacobian of an arbitrary curve, modulo rational squares,  as a function of its minimal regular model. This result plays a prominent role  in simplifying computations in \Cref{sec:ram_twist_hyp_curve}.

 Finally, across  Sections \ref{min reg model ramified} to \ref{completion_ram_quad_odd_res}, we prove  \Cref{Kramer Tunnell} when  $C/K$ has semistable reduction and when $L/K$ is a ramified quadratic extension of local fields with odd residue characteristic. Roughly speaking, once again our strategy is to encode each of the invariants appearing in  \Cref{Kramer Tunnell} in terms of the minimal proper regular models of both $C$ and $C^L$. However, since now $L/K$ is ramified,  the minimal regular model of $C^L$ can  be significantly different to that of $C$, making it hard to relate the relevant invariants. We overcome this by fixing a Weierstrass equation $y^2=f(x)$ for $C$ and drawing on the explicit description of the minimal regular models of $C$ and $C^L$ in terms of clusters (certain combinatorial objects encoding the distances between the roots of $f(x)$) afforded by the works  \cite{DDMM18} and \cite{MR4201122}. This essentially reduces \Cref{Kramer Tunnell}  to a purely combinatorial question about clusters, though one that still seems far from straightforward. We split the resulting analysis into two parts. First, in \Cref{h1 pi computation} we give an explicit description in terms of clusters of the group $\mathfrak{B}_{C/K}$ introduced by Betts--Dokchitser in \cite{MR3933907}; this group packages together information about the Tamagawa number of the Jacobian of $C$ over both $K$ and $L$, but seems simpler to describe than each of these quantities. Then in \Cref{sec:ram_twist_hyp_curve} we study the minimal regular model of $C^L$, describing in terms of clusters the Tamagawa number of the Jacobian of $C^L$ modulo rational squares; see \Cref{main_ram_quad_twist_cor}. Finally, in \Cref{completion_ram_quad_odd_res} we combine these results to establish the sought case of  \Cref{Kramer Tunnell}.

\subsection*{Notation and conventions}

For a field $K$ we denote by $\bar{K}$ a (fixed once and for all) algebraic closure of $K$, and denote by $K^s\subseteq \bar{K}$ the separable closure of $K$. We denote by $G_K=\textup{Gal}(K^s/K)$ the absolute Galois group of $K$.

\subsubsection{Hyperelliptic curves} \label{convention}
By  a hyperelliptic curve $C$ over a field $K$ we mean a smooth, proper, geometrically connected curve of genus $g \geq 2$, defined over $K$, and admitting a finite separable morphism  $C\rightarrow \mathbb{P}^1_K$ of degree $2$. When $K$ has characteristic different from $2$, one can always find a separable polynomial $f(x)\in K[x]$ of degree $2g+1$ or $2g+2$ such that $C$ is isomorphic to the curve given by gluing the affine schemes
\[U_1=\text{Spec}\frac{K[x,y]}{y^2-f(x)}\quad \textup{ and }\quad U_2=\text{Spec}\frac{K[u,v]}{v^2-u^{2g+2}f(1/u)},\]
  via the relations $x=1/u$ and $y=x^{g+1}v$. 
 By an abuse of notation we say that $C$ is given by the Weierstrass equation $y^2=f(x)$, and refer to elements of $U_2(\bar{K}) \setminus U_1(\bar{K})$ as the points at infinity. There are 2 such points if $\text{deg}(f)$ is even, and 1 if $\text{deg}(f)$ is odd. We denote by $\iota$ the hyperelliptic involution of $C$. For $C:y^2=f(x)$ this is the automorphism $(x,y)\mapsto (x,-y)$.  
 
 When $\textup{char}(K)\neq 2$, we define the discriminant $\Delta_C\in K^\times$ of a hyperelliptic curve given by a Weierstrass equation $C:y^2=f(x)$  by the formula given in   \cite[Seciton 2]{MR1363944}. One sees from that work that, up to squares in $K^\times$, this both agrees with the polynomial discriminant of $f(x)$ and is independent of the choice of Weierstrass equation for $C/K$. In particular, we will often consider $\Delta_C\in K^\times/K^{\times 2}$ without reference to a Weierstrass equation for $C$. Further, if we write $f(x)=c_ff_0(x)$ where $c_f$ is the leading coefficent of $f(x)$ and $f_0(x)$ is monic, then the discriminants of $f(x)$ and $f_0(x)$ differ by $c_f^{2\textup{deg}
(f)-2}$, hence agree modulo squares in $K$. In particular, the class $\Delta_C\in K^\times/K^{\times 2}$ does not feel the leading coefficient of $f(x)$.

\subsubsection{Quadratic twists} \label{quad_twist_hyp_convention}
Let $K$ be a field of characteristic different from $2$, and $L/K$ a quadratic extension. For a hyperelliptic curve $C/K$ we denote by $C^L/K$ the quadratic twist of $C$ by $L/K$. This is the twist of $C/K$ corresponding to the $1$-cocycle 
\[\textup{Gal}(L/K)\stackrel{\sim}{\longrightarrow}\{1,\iota\}\leq \textup{Aut}_L(C).\]
Suppose that  $C/K$ is given by a Weierstrass equation $y^2=f(x)$, and that $L=K(\sqrt{d})$ for some $d\in K^\times$. Then $C^L/K$ is given by the Weierstrass equation $y^2=df(x)$. In particular, it follows from the discussion on hyperelliptic discriminants above that, as elements  of $K^{\times}/K^{\times 2}$, we have $\Delta_C=\Delta_{C^L}$.

For an abelian variety $A/K$ we similarly denote by $A^L/K$ the quadratic twist of $A$ by $L/K$, which corresponds to the $1$-cocycle 
\[\textup{Gal}(L/K)\stackrel{\sim}{\longrightarrow}\{\pm 1\}\leq \textup{Aut}_L(A).\]
Denote by $\chi:G_K\rightarrow \{\pm 1\}$ the quadratic character corresponding to $L/K$. Then there is a $K^s$-isomorphism $\psi:A\stackrel{\sim}{\longrightarrow}A^L$ such that,  for all $\sigma \in G_K$, the composition $\psi^{-1}\circ {}^\sigma\psi$ is multiplication by $\chi(\sigma)$ on $A$, where ${}^\sigma \psi$ denotes the unique isomorphism $A\rightarrow A^L$ acting as $\sigma \circ \psi \circ \sigma^{-1}$ on $K^s$-points.  In particular, $\psi$ restricts to an isomorphism of $G_K$-modules $A[2]\cong A^L[2]$.

Since the hyperelliptic involution on $C$ induces multiplication by $-1$ on its Jacobian $J/K$,   the Jacobian of $C^L/K$ coincides with $J^L/K$.

\subsubsection{Galois cohomology}

For a profinite group $G$, a discrete $G$-module $M$, and integer $i\geq 0$, we denote by $H^i(G,M)$ the $i$-th cohomology group of $G$ with coefficients in $M$,  as defined in e.g. \cite{MR0225922}. We denote by $M^G$ the subgroup of elements of $M$ fixed by $G$. For $g\in G$ we denote by $M^g$ the subgroup of elements fixed by $g$. 

 When $G=G_K$ for a field $K$ we will often write $H^i(K,M)$ in place of $H^i(G_K,M)$. Similarly, for a Galois extension $L/K$ and a discrete $\textup{Gal}(L/K)$-module $M$, we often write $H^i(L/K,M)$ in place of $H^i(\textup{Gal}(L/K),M)$. 

\subsubsection{Notation for number fields and local fields}
For a number field $K$ we denote by $\mathcal{O}_K$ the ring of integers of $K$. For a  place $v$ of $K$, $K_v$ will denote the corresponding completion.    

By a local field $K$ we mean a locally compact valued field. Thus $K$ is isomorphic (as a valued field) to one of $\mathbb{R}$, $\mathbb{C}$, or a finite extension of either $\mathbb{Q}_p$ or $\mathbb{F}_p((t))$ for a prime $p$. For a nonarchimedean local field $K$ we take the following notation:

$\begin{array}{ll}
\mathcal{O}_\K &  \text{ring of integers of}~\K, \\
k &  \text{residue field of}~\K, \\
\pi & \text{a choice of uniformiser of }K,\\
v:\bar{K}^{\times}\rightarrow \mathbb{Q}  & \text{valuation on }\bar{K}\text{ normalised with respect to }K, \text{ so that }v(\pi)=1, \\
\mathfrak{m} &\textup{maximal ideal of the ring of integers of }\bar{K},\\
\K^\text{nr} & \text{maximal unramified extension of}~\K, \\
(a,\L/\K) & \text{Artin symbol of}~ a\in \K^\times~\text{in a Galois extension}~\L/\K\text{. We will usually take}~\L/\K \\   
&  \text{quadratic, in which case we regard this symbol as being valued in}~\{\pm 1\}.\\
\end{array}$
\bigskip

\subsubsection{Notation for  curves  and abelian varieties} \label{Ab_var_notat_intro}
For a smooth, proper, geometrically connected  curve $X$ over a  local field $K$, we define $\epsilon(X/K)\in \{0,1 \}$ to be equal to $1$ if $X$ is deficient over $K$, and equal to $0$ else. Thus $\epsilon(X/K)=1$ if and only if $X$ has a $K$-rational divisor of degree $g-1$, where $g$ is the genus of $X$.

Throughout the paper, for a field $K$, $C/K$ will almost always denote a hyperelliptic curve over $K$, $g$ will denote the genus of $C$, and $J/K$ will denote the Jacobian of $C$. 

For an abelian variety $A$ over a field $K$ (usually the Jacobian of a hyperelliptic curve $C$), we take the following notation.

\smallskip

\textit{For $K$ a number field:}
\smallskip

$\begin{array}{ll}
\textup{rk}_2(A/K) & \text{the }2\text{-infinity Selmer rank of }A/K,\\
\textup{Sel}^2(A/K) &\textup{the }2\text{-Selmer group of }A/K,\\ 
\Sha(A/K) & \text{the Shafarevich--Tate group of}~A/K,\\
\Sha_{\textup{nd}}(A/K) & \text{the quotient of}~\Sha(A/K)~ \text{by its maximal divisible subgroup},\\
w(A/K) & \text{the global root number of}~A/K.\\
\end{array}$
\smallskip

\textit{For $K$ a nonarchimedean local field:}
\smallskip

$\begin{array}{ll}
\Phi &   \text{the component group of the special fibre of the N\'{e}ron model of }A/K;\\
& \text{we often refer to this as the N\'{e}ron component group of $A$.}\\
c(A/\K) &  \text{the Tamagawa number of }A/K.\text{ By definition this is the order of the group }\Phi(k)\\ 
&\text{ of }k\text{-rational points of }\Phi,\\
w(A/K) & \text{the local root number of }A/K,\\
N_{L/K} & \text{for }L/K\text{ separable quadratic, denotes the norm map }A(L)\rightarrow A(K)\\
& \text{ sending }P\in A(L)\text{ to }N_{L/K}:=\sum_{\sigma \in \textup{Gal}(L/K)}\sigma(P).
\end{array}$

\subsection*{Acknowledgements}
A significant part of this work appears in the author's PhD thesis \cite{AMo15} at the University of Bristol. I would like to thank my advisor Tim Dokchitser for suggesting the problem, for constant encouragement, and for many helpful suggestions and comments. I would also like to thank K\k{e}stutis \v{C}esnavi\v{c}ius for several important comments and correspondence during that time. 

Compared to \cite{AMo15}, the main results are strengthened by drawing on the works  \cite{DDMM18} and \cite{MR4201122}, and I am grateful to Omri Farragi and Sarah Nowell for answering my questions about their work \cite{MR4201122}. I would also like to thank Alex Bartel, L. Alexander Betts, Vladimir Dokchitser, Qing Liu and C\'{e}line Maistret for helpful conversations.

Parts of this work were completed while the author was supported by the Engineering and Physical Sciences Research Council (EPSRC) grants EP/M016846/1 `Arithmetic of hyperelliptic curves', and  EP/V006541/1 `Selmer groups, Arithmetic Statistics and Parity Conjectures'. 

\section{2-Selmer groups in quadratic extensions} \label{2-Selmer Groups in Quadratic Extensions}

 In this section we combine results of \cite{MR3951582} and \cite{MR1740984} to deduce \Cref{selmer decomposition_intro}. Let $L/K$ be a quadratic extension of number fields, let $C/K$ be a hyperelliptic curve, and let $J/K$ denote the Jacobian of $C$.  Further, denote by $\textup{rk}_2(J/K)$ the $2$-infinity Selmer rank of $J/K$, and recall from \Cref{intro_notat_nrm} the definitions of the local norm map and the invariant $\epsilon(C/K_v)$ for a place $v$ of $K$. Let $C^L/K$ (resp. $J^L/K$) denote the quadratic twist of $C$ (resp. $J$) by $L/K$.

\begin{theorem}[=\Cref{selmer decomposition_intro}] \label{selmer decomposition}
We have
\[
 \textup{rk}_{2}(J/L)\equiv \sum_{\substack{v\textup{ place of }K\\v\textup{ non-split in }L/K}}\Big(\epsilon(C/K_v)+\epsilon(C^L/K_v)+\dim J(K_{v})/N_{L_\mathfrak{v}/K_v}J(L_{\mathfrak{v}}) \Big)~~\quad(\textup{mod }2).\]
\end{theorem}

\begin{proof}
 By \cite[Theorem 10.12]{MR3951582} we have 
\[\dim\textup{Sel}^2(J/K)+\dim\textup{Sel}^2(J^L/K)\equiv \sum_{\substack{v\textup{ place of }K\\v\textup{ non-split in }L/K}}\dim J(K_{v})/N_{L_\mathfrak{v}/K_v}J(L_{\mathfrak{v}})~~(\textup{mod }2)\]
where here $\textup{Sel}^2(J/K)$ denotes the $2$-Selmer group of $J/K$ (and similarly for  $J^L/K$).  
Consequently (cf. \cite[proof of  Theorem 10.20]{MR3951582}) we have 
\[\textup{rk}_2(J/L)\equiv \dim\Sha_{\textup{nd}}(J/K)[2]+\dim\Sha_{\textup{nd}}(J^L/K)[2]+\sum_{\substack{v\textup{ place of }K\\v\textup{ non-split in}L}}\dim J(K_{v})/N_{L_\mathfrak{v}/K_v}J(L_{\mathfrak{v}})~\quad~(\textup{mod }2),\]
where  $\Sha_{\textup{nd}}(J/K)$ denotes the quotient of the Shafarevich--Tate group of $J/K$ by its maximal divisible subgroup.  
 It follows from \cite[Theorem 11]{MR1740984} that 
\[ \dim\Sha_{\textup{nd}}(J/K)[2] \equiv  \sum_{v\textup{ place of }K}\epsilon(C/K_v)\quad\quad \textup{ and }\quad\quad \dim\Sha_{\textup{nd}}(J^L/K)[2] \equiv \sum_{v\textup{ place of }K}\epsilon(C^L/K_v),\]
where both congruences are modulo $2$. For  the second equality we are using that the Jacobian of $C^L$ coincides with the quadratic twist $J^L$.
Since $C$ and $C^L$ are isomorphic over $K_v$ for each place $v$ that splits in $L/K$, the result follows.
\end{proof} 

\begin{remark}
One of the key reasons for working with Jacobians of hyperelliptic curves in this paper is that the quadratic twist $J^L$ is again the Jacobian of an explicit curve: the quadratic twist $C^L$. This allows us to give an explicit description of the parity of both $ \dim\Sha_{\textup{nd}}(J/K)[2] $ and $\dim\Sha_{\textup{nd}}(J^L/K)[2] $ in terms of deficiency. 
\end{remark}

\section{Basic properties of the local norm map} \label{local norm section}

In this section we prove some basic properties of the cokernel of the local norm map. Take $K$ to be a local field of characteristic different from $2$, and let $L/K$ be a quadratic extension. We work with arbitrary principally polarised abelian varieties since everything goes through in this setting. Thus for now we fix a principally polarised abelian variety $A/K$, and denote by $A^L$ the quadratic twist of $A$ by $L/K$. Denote by $N_{L/K}:A(L)\rightarrow A(K)$ the local norm map, sending $P\in A(L)$ to $\sum_{\sigma \in \textup{Gal}(L/K)}\sigma(P)$. Further, denote by $\chi:G_K\rightarrow \{\pm 1\}$ the quadratic character corresponding to $L/K$.

\begin{lemma} \label{cokernel of mult by 2}
We have 
\[\dim A(K)/2A(K) = \dim A^{L}(K)/2A^{L}(K) .\]
\end{lemma}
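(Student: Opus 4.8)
The plan is to compute the two sides separately via the standard local index formula relating the mod-$2$ cokernel of an abelian variety to its $2$-torsion, and then to check that the only two quantities entering that formula coincide for $A$ and for its twist $A^L$.

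First I would recall that, for any abelian variety $B/K$ of dimension $d$ over a local field of characteristic zero, $B(K)$ is a compact topological group containing (by Mattuck's theorem, via the formal logarithm) an open subgroup $U\cong\mathcal{O}_K^{\,d}$. Applying the snake lemma to multiplication by $2$ on $0\to U\to B(K)\to B(K)/U\to 0$ and using that the finite group $B(K)/U$ has trivial Euler characteristic for the map $[2]$, one obtains
\[
\#\bigl(B(K)/2B(K)\bigr)=\#B(K)[2]\cdot\#\bigl(U/2U\bigr)=\#B(K)[2]\cdot|2|_K^{-d},
\]
where $|\cdot|_K$ is the normalised absolute value and I have used that $U$ is torsion free with $\#(\mathcal{O}_K/2\mathcal{O}_K)=|2|_K^{-1}$. (The same identity holds at the archimedean places, where $B(K)$ has a divisible identity component.)

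I would then apply this with $B=A$ and $B=A^L$. Since $A^L$ becomes isomorphic to $A$ over $L$, it has the same dimension $g$, so the factors $|2|_K^{-g}$ agree and the lemma reduces to the equality $\#A(K)[2]=\#A^L(K)[2]$. For this, recall that $A^L$ is the twist of $A$ by the cocycle $\tau\mapsto[\chi(\tau)]$ valued in $\mathrm{Aut}(A)$, where $\chi\colon\mathrm{Gal}(\bar K/K)\to\{\pm1\}$ is the quadratic character cutting out $L$; thus there is a $\bar K$-isomorphism $\psi\colon A\to A^L$ with $\psi^{-1}\psi^{\tau}=[\chi(\tau)]$. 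Because $[-1]$ acts trivially on $2$-torsion, for $P\in A[2](\bar K)$ one computes
\[
\psi(P)^{\tau}=\psi^{\tau}(P^{\tau})=\psi\bigl([\chi(\tau)]P^{\tau}\bigr)=\psi(P^{\tau}),
\]
so $\psi$ restricts to a $\mathrm{Gal}(\bar K/K)$-equivariant isomorphism $A[2]\xrightarrow{\sim}A^L[2]$. Taking Galois invariants gives $A(K)[2]\cong A^L(K)[2]$, and combining this with the displayed index formula for $A$ and for $A^L$ yields the claim.

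The only non-formal ingredient is the local index formula of the second paragraph; everything else is a direct cocycle computation. Accordingly I expect the main point requiring care to be the correct packaging of Mattuck's structure theorem and the Euler-characteristic bookkeeping (in particular the normalisation of $|2|_K$ and the archimedean case), rather than any genuine difficulty, since the $2$-torsion comparison is immediate from the fact that the twisting automorphism is $\pm1$.
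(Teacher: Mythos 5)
Your proposal is correct and follows essentially the same route as the paper: both reduce the statement to the equality $\#A(K)[2]=\#A^L(K)[2]$ (proved by the same cocycle computation with $\psi^{-1}\psi^{\sigma}=[-1]$) via the index formula $\#\bigl(B(K)/2B(K)\bigr)=\#B(K)[2]\cdot|2|_K^{-\dim B}$. The only cosmetic difference is that the paper cites Schaefer for this formula in the non-archimedean case and verifies the real case by hand from the Lie-group structure of $A(\mathbb{R})$, whereas you sketch the Mattuck/snake-lemma derivation uniformly.
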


\begin{proof}
Let $\delta:A(K)/2A(K)\hookrightarrow H^1(K,A[2])$ be the connecting map associated to the multiplication-by-$2$ Kummer sequence for $A$. By \cite[Proposition 4.10]{MR2833483} the image of $\delta$ is a maximal isotropic subspace of $H^1(K,A[2])$ with respect to the pairing coming from cup-product and the local invariant map. Since $A[2]\cong A^L[2]$ as $G_K$-modules this gives
\[\dim A(K)/2A(K)=\frac{1}{2}\dim H^1(K,A[2])=\dim A^L(K)/2A^L(K),\]
as desired.
\end{proof}

From the definition of the quadratic twist $A^L$ we have an $L$-isomorphism $\psi:A\stackrel{\sim}{\longrightarrow} A^L$ such that, for all $\sigma \in G_K$, the composition $\psi^{-1}\circ {}^\sigma\psi$ is multiplication by $\chi(\sigma)$ on $A$. The map $\psi^{-1}$ identifies $A^L(L)$ with $A(L)$, and identifies $A^L(K)$ with $\text{ker}\left(\N:A(L)\rightarrow A(K)\right)$.  
The local norm map $A^{L}(L)\rightarrow A^{L}(K)$ then  identifies with the map sending $P\in A(L)$ to $P-\sigma(P)$. To avoid confusion, we denote this map by $\N^{L}$. 

\begin{lemma} \label{basic properties of norm map} 
The group $A(K)/\N A(L)$ is a finite dimensional $\mathbb{F}_{2}$-vector space and  
\[
\dim A(K)/\N A(L) =\dim A^{L}(K)/\N^L A(L).
\]
\end{lemma}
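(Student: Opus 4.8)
The plan is to interpret both sides as Tate cohomology groups of the $G$-module $A(L)$, where $G=\mathrm{Gal}(L/K)$ is cyclic of order two, and to deduce the equality of their orders from the statement that the Herbrand quotient $h(A(L))$ equals $1$.

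For the first assertion, note that if $P\in A(K)$ then, regarding $P$ inside $A(L)$, one has $\N(P)=P+\sigma(P)=2P$, so $2A(K)\subseteq\N A(L)$. Hence $A(K)/\N A(L)$ is a quotient of $A(K)/2A(K)$, which is finite (as established in the proof of \Cref{cokernel of mult by 2}), and it is annihilated by $2$; it is therefore a finite-dimensional $\F_2$-vector space. Writing $\hat{H}^i(G,-)$ for Tate cohomology, we have $\hat{H}^0(G,A(L))=A(L)^G/\N A(L)=A(K)/\N A(L)$, while the identifications of the preceding discussion, namely $\ker(\N:A(L)\to A(K))=A^L(K)$ and $(\sigma-1)A(L)=\N^L A(L)$, give $\hat{H}^{-1}(G,A(L))=A^L(K)/\N^L A(L)$. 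Thus the asserted equality is precisely $|\hat{H}^0(G,A(L))|=|\hat{H}^{-1}(G,A(L))|$, i.e. that $h(A(L))=1$.

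To compute $h(A(L))$ I would use its invariance under passage to $G$-submodules of finite index, together with its triviality on finite modules and on induced modules. When $K$ is non-archimedean, the formal logarithm provides a $G$-stable open subgroup of $A(L)$ that is commensurable, as a $G$-module, with a full $G$-stable lattice in the tangent space $\mathrm{Lie}(A)(L)\cong L^{\oplus g}$; by the normal basis theorem $L\cong K[G]$ as a $K[G]$-module, so this module is commensurable with $\mathcal{O}_K[G]^{g}$, which is induced and hence has Herbrand quotient $1$. When $K=\mathbb{R}$ and $L=\mathbb{C}$, the exponential sequence $0\to\Lambda\to\mathrm{Lie}(A)(\mathbb{C})\to A(\mathbb{C})\to 0$ of $G$-modules, together with the fact that the uniquely divisible, torsion-free module $\mathrm{Lie}(A)(\mathbb{C})$ is cohomologically trivial, reduces the computation to $h(\Lambda)$ for the lattice $\Lambda\cong\mathbb{Z}^{2g}$. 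Since complex conjugation acts antilinearly on $\mathrm{Lie}(A)(\mathbb{C})$, its $+1$ and $-1$ eigenspaces both have real dimension $g$, whence $\Lambda$ is commensurable with $\mathbb{Z}^{g}\oplus(\mathbb{Z}^{-})^{g}$, where $\mathbb{Z}^{-}$ is the sign representation; as $h$ is $2$ on the trivial module and $1/2$ on $\mathbb{Z}^{-}$, this yields $h(\Lambda)=2^{g}\cdot(1/2)^{g}=1$.

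I expect the main obstacle to be the bookkeeping in the two commensurability arguments: pinning down the correct finite-index $G$-stable lattices (via the formal logarithm and via the exponential uniformisation respectively), checking that the Herbrand quotient is genuinely unchanged under these reductions, and carrying out the eigenspace dimension count in the archimedean case. By contrast, the cohomological reformulation is routine once the identifications $A^L(K)=\ker\N$ and $\N^L A(L)=(\sigma-1)A(L)$ from the preceding discussion are in hand, and the finiteness of both groups (needed for the Herbrand quotient to be meaningful) follows as for $\hat{H}^0$ above.
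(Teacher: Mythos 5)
Your proof is correct, but it takes a genuinely different route from the paper's for the key equality of orders. The paper's own argument is elementary and self-contained: it exhibits an explicit isomorphism $\theta\colon (\N A(L))/2A(K)\to(\N^{L}A(L))/2A^{L}(K)$, $\N(P)\mapsto\N^{L}(P)$, and combines it with the equality $|A(K)/2A(K)|=|A^{L}(K)/2A^{L}(K)|$ from \Cref{cokernel of mult by 2}; no cohomological machinery is needed. You instead identify the two groups with $\hat{\mathrm{H}}^{0}(G,A(L))$ and $\hat{\mathrm{H}}^{-1}(G,A(L))$ and compute the Herbrand quotient via the formal logarithm (non-archimedean case) and the exponential uniformisation (archimedean case) — precisely the alternative the paper flags in the remark following the lemma, though the paper only mentions the formal-group route for non-archimedean $K$ and you supply the archimedean eigenspace count as well. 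Your identifications $A^{L}(K)=\ker\N$ and $\N^{L}A(L)=(\sigma-1)A(L)$, the reduction to lattices commensurable with $\mathcal{O}_K[G]^{g}$ (resp.\ $\mathbb{Z}^{g}\oplus(\mathbb{Z}^{-})^{g}$), and the finiteness needed for the Herbrand quotient to be defined are all in order. What the paper's approach buys is brevity and independence from uniformisation theorems; what yours buys is a conceptual explanation (triviality of the Herbrand quotient of $A(L)$) that does not rely on first establishing \Cref{cokernel of mult by 2} for the twist, and that generalises readily.
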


\begin{proof}
That $A(K)/\N A(L)$ is a finite dimensional $\mathbb{F}_2$-vector space follows from the fact that $2A(K)\subseteq \N A(L)$  along with the well-known finiteness of $A(K)/2A(K)$. Next, consider the map
\[\theta:  \N A(L) /2A(K)\longrightarrow \N^{L}A(L)/2A^{L}(K)\]
sending $\N(P)$ to $\N^{L}(P)$. This is readily checked to be a (well defined)  isomorphism. The result now follows from \Cref{cokernel of mult by 2}.
\end{proof}

Now let $\textup{Res}_{L/K}A$ denote the Weil restriction of scalars of $A$ from $L$ to $K$. This is an abelian variety over $K$ of dimension $2\textup{dim}A$ which represents the functor $T\mapsto A(T\times_K L)$. As explained in \cite[Section 2]{MR0330174} (see also \cite[Proposition 4.1]{MR2331769}), denoting by $\gamma$ the involution of $A\times A$ swapping the factors,   $\textup{Res}_{L/K}A$ can be described as the twist of $A\times A$ corresponding to the $1$-cocycle $G_K\rightarrow \textup{Aut}_{K^s}(A\times A)$ defined by
\[\sigma \mapsto \begin{cases}\textup{id}~~&~~\chi(\sigma)=1,\\ \gamma~~&~~\chi(\sigma)=-1. \end{cases}\]
This  identifies $(\textup{Res}_{L/K}A)(K)$ with the $L$-points of $A$ diagonally embedded in $A(\bar{K})\times A(\bar{K})$, realising the functor of points description for $T=K$.

As above, both $\textup{Res}_{L/K}A$ and $A\times A^L$ are twists of $A\times A$, and one checks that the endomorphism of $A\times A$  given by $(P,Q)\mapsto (P+Q,P-Q)$ descends to an isogeny $\phi: \textup{Res}_{L/K}A \rightarrow A \times A^L$. On $K$-points this is just the map 
\begin{equation} \label{eq:phi_on_K_points}
(N_{L/K},N^L_{L/K}):(\textup{Res}_{L/K}A)(K)=A(L)\longrightarrow A(K)\times A^L(K).\end{equation}
(See \cite[Sections 4 and 5]{MR2331769} for generalisations of this isogeny when $L/K$ is replaced by a  general finite Galois extension.)

We exploit the  isogeny $\phi$ to prove the final lemma of this section, which expresses the cokernel of the local norm map in terms of Tamagawa numbers. The special case of this for elliptic curves is due to Kramer and Tunnell \cite[Corollary 7.6]{MR664648}, although the proof is different.   Recall from \Cref{Ab_var_notat_intro} that $c(A/K)$ denotes the Tamagawa number of $A/K$. 

\begin{lemma} \label{norm map as Tamagawa numbers}
Assume that the residue characteristic of $K$ is odd. Then

\begin{equation*}
\dim A(K)/\N A(L)=\textup{ord}_2 \frac{c(A/K)c(A^L/K)}{c(A/L)}.
\end{equation*}
\end{lemma}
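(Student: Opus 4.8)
The plan is to feed the isogeny $\phi_f:\mathrm{Res}_{L/K}(A)\to A\times A^L$ of \Cref{Twists} into the general principle that, for an isogeny of abelian varieties over a local field whose degree is prime to the residue characteristic, the ratio $|\mathrm{coker}|/|\ker|$ on $K$-points computes the ratio of Tamagawa numbers. Since $\phi_f$ and $[2]_A$ both have $2$-power degree and the residue characteristic is odd by hypothesis, this principle applies to each of them. Combined with \Cref{norm as isogeny}, which already expresses $|A(K)/\N A(L)|$ in terms of $|\mathrm{coker}(\phi_f)|/|\ker(\phi_f)|$ and the analogous quantity for $[2]_A$, this reduces the lemma to a bookkeeping of Tamagawa numbers together with one structural identity for the Weil restriction.

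First I would establish the key local principle: for an isogeny $\psi\colon B\to B'$ of abelian varieties over $K$ with $\deg\psi$ prime to the residue characteristic,
\[
\frac{|\mathrm{coker}(\psi\colon B(K)\to B'(K))|}{|\ker(\psi\colon B(K)\to B'(K))|}=\frac{c(B'/K)}{c(B/K)}.
\]
The argument filters $B(K)$ and $B'(K)$ by the formal group, the identity component of the special fibre, and the component group of the Néron model, and uses that the invariant $q(-):=|\mathrm{coker}|/|\ker|$ is multiplicative across the resulting short exact sequences (snake lemma). On the formal group $\psi$ induces an isomorphism, since $\ker\psi$ is étale of order prime to $p$; on the identity component of the special fibre $q$ is trivial by Lang's theorem together with $|\mathrm H^0(k,-)|=|\mathrm H^1(k,-)|$ for finite Galois modules over the finite field $k$; and on the component groups $q(\psi)=|\Phi_{B'}(k)|/|\Phi_{B}(k)|=c(B'/K)/c(B/K)$. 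Applying this to $[2]_A$ gives $q([2]_A)=c(A/K)/c(A/K)=1$, and applying it to $\phi_f$ gives $q(\phi_f)=c(A/K)c(A^L/K)/c(\mathrm{Res}_{L/K}(A)/K)$, using $c((A\times A^L)/K)=c(A/K)c(A^L/K)$. Substituting these into \Cref{norm as isogeny} yields
\[
|A(K)/\N A(L)|=\frac{c(A/K)\,c(A^L/K)}{c(\mathrm{Res}_{L/K}(A)/K)}.
\]

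It then remains to identify $c(\mathrm{Res}_{L/K}(A)/K)$ with $c(A/L)$. The clean way is to show that $\mathrm{Res}_{\mathcal O_L/\mathcal O_K}(\mathcal N)$, the Weil restriction of the Néron model $\mathcal N$ of $A/L$, is the Néron model of $\mathrm{Res}_{L/K}(A)$: Weil restriction along the finite flat map $\mathcal O_L/\mathcal O_K$ preserves smoothness, and the Néron mapping property transfers through the adjunction $\mathrm{Hom}_{\mathcal O_K}(Z,\mathrm{Res}_{\mathcal O_L/\mathcal O_K}\mathcal N)=\mathrm{Hom}_{\mathcal O_L}(Z_{\mathcal O_L},\mathcal N)$ applied to smooth $Z/\mathcal O_K$. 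One then checks that passing to component groups gives $\Phi_{\mathrm{Res}}(k)\cong\Phi_{A}(k_L)$, whence $c(\mathrm{Res}_{L/K}(A)/K)=c(A/L)$. This is where I expect the real work to lie: in the ramified case $\mathcal O_L/\mathfrak m_K\mathcal O_L$ is non-reduced, so the special fibre of the Weil restriction acquires extra connected (unipotent) directions and $\mathrm{Res}_{L/K}(A)$ need not have good reduction even when $A/L$ does; one must argue that these infinitesimal contributions are connected and hence do not affect $\pi_0$, reducing the computation to the separable extension $k_L/k$. Granting this, the displayed identity becomes $|A(K)/\N A(L)|=c(A/K)c(A^L/K)/c(A/L)$. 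Since the left-hand side is a power of $2$ by \Cref{basic properties of norm map}, so is the right-hand side, and taking $\mathrm{ord}_2$ (equivalently $\log_2$) of both sides gives the claimed formula.
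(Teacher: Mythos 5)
Your proposal is correct and follows essentially the same route as the paper: \Cref{norm as isogeny} combined with the isogeny--Tamagawa-number principle for isogenies of degree prime to the residue characteristic, followed by the identification $c(\mathrm{Res}_{L/K}(A)/K)=c(A/L)$ and $c((A\times A^L)/K)=c(A/K)c(A^L/K)$. The only difference is that the paper simply cites these two ingredients (Schaefer's Lemma 3.8 in \cite{MR1370197} and Lorenzini's Proposition 3.19 in \cite{MR2961846}) rather than proving them, whereas you sketch their proofs -- correctly, including the genuine subtlety about the non-reduced special fibre of the Weil restriction in the ramified case.
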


\begin{proof}
To ease notation write $X=\text{Res}_{L/K}A$ and $Y=A\times A^{L}$. With $\phi$ as above, since $K$ has odd residue characteristic  it follows from  a formula of Schaefer \cite[Lemma 3.8]{MR1370197} that  
\begin{equation} \label{eq:tam_cong_iden_1}
\textup{ord}_2\frac{\big|Y(K)/\phi X(K)\big|}{\big|X(K)[\phi]\big|}=\textup{ord}_2 \frac{c(Y/K)}{c(X/K)}=\textup{ord}_2 \frac{c(A/K)c(A^L/K)}{c(A/L)},
\end{equation}
the last equality following from  \cite[Proposition 3.19]{MR2961846} (see also \cite[Proof of Proposition 2]{MR0330174}).  From the description \eqref{eq:phi_on_K_points} of the map $\phi$ on $K$-points, one sees that $X(K)[\phi]\cong A(K)[2]$ and that we have a short exact sequence
\begin{equation} \label{tam_congruence_seq_2}
0\longrightarrow A^L(K)/2A^L(K)\longrightarrow Y(K)/\phi X(K)\longrightarrow A(K)/N_{L/K}A(L)\longrightarrow 0,
\end{equation}
the first map induced by inclusion into the second factor, and the second map being the projection onto the first factor. Since $K$ has odd residue characteristic  we have  (cf. \cite[Proposition 3.9]{MR1370197} for example)
\[\dim A^L(K)/2A^L(K)=\dim A^L(K)[2] \quad (=\dim A(K)[2]) .\]
 The result now follows by combining this last observation with \eqref{eq:tam_cong_iden_1} and \eqref{tam_congruence_seq_2}. 
\end{proof}

\section{Compatibility results} \label{compatibility results}

In this section we prove several compatibility results for \Cref{Kramer Tunnell}. These provide some evidence in favour of the conjecture and will also be used to make some reductions as part of the proof of \Cref{thm:cases_of_kramer_tunnell}. 

In what follows, $K$ denotes a local field of characteristic different from $2$. Let $L/K$ be a quadratic extension and let $C/K$ be a hyperelliptic curve. 

\subsection{Odd degree Galois extensions}

\begin{lemma}\label{odd degree extension}
Every individual term in \Cref{Kramer Tunnell} is unchanged under odd degree Galois extension of the base field. In particular, if $F/K$ is an odd degree Galois extension, then \Cref{Kramer Tunnell} holds for $C/K$ and the extension $L/K$  if and only if it holds for $C/F$ and the extension $LF/F$.  
\end{lemma}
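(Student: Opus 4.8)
The plan is to check that each of the six factors in \Cref{Kramer Tunnell} is individually invariant when one passes from $K$ to $F$ (relative to $LF/F$); the displayed ``if and only if'' then follows immediately, since both sides of the conjectural identity are products of these invariant factors. Throughout write $D=\mathrm{Gal}(F/K)$, so $|D|$ is odd. Because $[L:K]=2$ is coprime to $|D|$ we have $L\cap F=K$, so $LF/F$ is again quadratic and restriction gives an isomorphism $\mathrm{Gal}(LF/F)\xrightarrow{\sim}\mathrm{Gal}(L/K)$; moreover $\mathrm{Gal}(LF/K)=\mathrm{Gal}(L/K)\times D$. Finally, if $K$ is archimedean its only odd degree extension is $K$ itself, so we may assume $K$ non-archimedean.

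Three of the terms I would dispatch by formal arguments. For the Artin symbol, set $a=(-1)^g\Delta_C\in K^\times$; functoriality of the reciprocity map under the identification $\mathrm{Gal}(LF/F)\cong\mathrm{Gal}(L/K)$ gives $(a,LF/F)_F=(N_{F/K}a,L/K)_K=(a^{[F:K]},L/K)_K$, which equals $(a,L/K)_K$ since $[F:K]$ is odd and the symbol lies in $\{\pm1\}$. For the deficiency terms, I would use that by Poonen--Stoll deficiency over a local field is detected by whether a certain $2$-torsion Brauer class has local invariant $0$ or $\tfrac12$; the obstruction over $F$ is the restriction of that over $K$, and restriction multiplies the invariant by $[F:K]$, preserving its value modulo $\mathbb{Z}$ as $[F:K]$ is odd, so $i_d(C)$ and $i_d(C^L)$ are unchanged. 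For the root numbers $w(J/K)$ and $w(J^L/K)$ I would invoke the standard invariance of local root numbers of the symplectic Weil--Deligne representation $H^1$ under odd degree base change.

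The substance is the norm term: I must show $\dim_{\mathbb{F}_2}J(K)/\N J(L)\equiv\dim_{\mathbb{F}_2}J(F)/N_{LF/F}J(LF)\pmod 2$. Put $V=\widehat{H}^0(\mathrm{Gal}(L/K),J(L))\cong J(K)/\N J(L)$ and $W=\widehat{H}^0(\mathrm{Gal}(LF/F),J(LF))$, both finite $\mathbb{F}_2$-vector spaces by \Cref{basic properties of norm map}, and note $D$ acts on $W$ since it commutes with $\mathrm{Gal}(LF/F)$ inside the product $\mathrm{Gal}(LF/K)$. Using $J(L)=J(LF)^D$, the inclusion $J(L)\hookrightarrow J(LF)$ and the trace $\sum_{d\in D}d\colon J(LF)\to J(L)$ induce maps $V\to W$ and $W\to V$ whose composite $V\to V$ is multiplication by $|D|$, hence the identity on the $\mathbb{F}_2$-space $V$. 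Since $|D|$ is odd, $\mathbb{F}_2[D]$ is semisimple and the opposite composite on $W$ is the projection onto $W^D$; it follows that the first map is injective with image exactly $W^D$, so $V\cong W^D$. It therefore suffices to prove that $\dim_{\mathbb{F}_2}(W/W^D)$ is even.

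To control this parity I would use the pairing coming from the principal polarisation. Cup product combined with the Weil pairing and the invariant isomorphism $\widehat{H}^2(\mathrm{Gal}(LF/F),LF^\times)\cong\tfrac12\mathbb{Z}/\mathbb{Z}$ yields a nondegenerate pairing $W\times W\to\mathbb{F}_2$ (local Tate duality for the finite group $\mathrm{Gal}(LF/F)$, using $J\cong J^t$ and the two-periodicity of Tate cohomology), which is $D$-equivariant with $D$ acting trivially on the target; thus $W\cong W^\vee$ as $\mathbb{F}_2[D]$-modules. Now I invoke the modular representation theory of the odd order group $D$: every nontrivial irreducible $S$ occurs in the self-dual module $W$ with the same multiplicity as its dual $S^\vee$, so pairs $S\not\cong S^\vee$ contribute an even dimension, while a self-dual nontrivial irreducible is itself even-dimensional (an odd order group has no nontrivial self-dual irreducible in characteristic $0$, and coprimality of $|D|$ to $2$ forces the corresponding Frobenius orbit of $\overline{\mathbb{F}}_2$-constituents to be inversion-closed of even length). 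Stripping off the trivial part $W^D$, each remaining summand of $W/W^D$ has even dimension, as required. I expect this final term to be the real obstacle: the reduction $V\cong W^D$ is routine, but fixing the parity genuinely needs both the self-duality of the Tate pairing and this input on odd order groups.
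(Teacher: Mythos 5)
Your treatment of the Artin symbol, the deficiency terms and the root numbers is fine (the deficiency argument is precisely the one the paper gives in \Cref{deficiency in extensions}, and for the remaining terms the paper likewise defers to standard references), and your reduction $V\cong W^{D}$ for the norm term via restriction--corestriction and semisimplicity of $\mathbb{F}_2[D]$ is correct. The gap is in the construction of the self-pairing on $W=\hat{\mathrm{H}}^{0}(\mathrm{Gal}(LF/F),J(LF))$. Tate's finite-level local duality pairs $\hat{\mathrm{H}}^{r}$ of $J$ with $\hat{\mathrm{H}}^{1-r}$ of $J^{t}$ --- the degrees sum to $1$, not $0$ --- so after identifying $J$ with $J^{t}$ via the principal polarisation what you actually obtain is a perfect $D$-equivariant pairing between $W$ and $\mathrm{H}^{1}(\mathrm{Gal}(LF/F),J(LF))$, and the latter group is canonically the norm cokernel of the quadratic twist $J^{LF}$ (see the remark following \Cref{basic properties of norm map}), not $W$ itself. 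Two-periodicity of Tate cohomology of a cyclic group identifies $\hat{\mathrm{H}}^{0}$ with $\hat{\mathrm{H}}^{2}$ and $\hat{\mathrm{H}}^{1}$ with $\hat{\mathrm{H}}^{-1}$; it never identifies $\hat{\mathrm{H}}^{0}$ with $\hat{\mathrm{H}}^{1}$, and a cup product $\hat{\mathrm{H}}^{0}\times\hat{\mathrm{H}}^{0}\to\hat{\mathrm{H}}^{1}(\mathrm{Gal}(LF/F),(LF)^{\times})$ is killed by Hilbert 90. So the asserted isomorphism $W\cong W^{\vee}$ of $\mathbb{F}_2[D]$-modules is not established, and without it your (correct) representation-theoretic input on odd-order groups has nothing to act on. Note that merely knowing $|W|=|\mathrm{H}^{1}|$ (triviality of the Herbrand quotient) does not suffice: running your isotypic decomposition on the genuinely self-dual module $W\oplus W^{\vee}$ returns only the tautology $2\dim W\equiv 2\dim W^{D}$.

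To repair the argument along these lines you would need the $D$-equivariant refinement of \Cref{basic properties of norm map}, namely that $\hat{\mathrm{H}}^{0}(\mathrm{Gal}(LF/F),J(LF))$ and $\mathrm{H}^{1}(\mathrm{Gal}(LF/F),J(LF))$ are isomorphic as $\mathbb{F}_2[D]$-modules. For finite coefficients this follows from a Jordan--H\"older computation in the Grothendieck group of finite $D$-modules, and in odd residue characteristic one reduces to that case because the kernel of reduction is uniquely $2$-divisible, hence cohomologically trivial; but in residue characteristic $2$ the kernel of reduction is a nontrivial $\mathbb{Z}_2[\mathrm{Gal}(LF/K)]$-lattice for which $\hat{\mathrm{H}}^{0}$ and $\hat{\mathrm{H}}^{1}$ genuinely differ, so this step requires real work there. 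The paper avoids all of this by citing Kramer--Tunnell (Proposition 3.5) for the norm term; as written, your proof of its invariance under odd degree Galois base change is incomplete.
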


\begin{proof}
That the term $(\Delta_C,L/K)$ is invariant under odd degree extensions (not necessarily Galois) is standard. Similarly, it's not hard to show that the terms involving deficiency of $C$ and its twist are also individually invariant under arbitrary odd degree extensions (cf. \Cref{deficiency in extensions} for a more general result which implies this). The statement for each of the root numbers is also standard; see  for example  \cite[Lemma A.1~and~Proposition A.2]{MR2534092} or \cite[Proposition 3.4]{MR664648}. For the cokernel of the local norm map, the statement for elliptic curves is \cite[Proposition 3.5]{MR664648} and the argument for general abelian varieties is identical.
\end{proof}  

\subsection{First compatibility with quadratic twist}

\begin{lemma} \label{twist 1}
  \Cref{Kramer Tunnell} holds for $C/K$ and the extension $L/K$ if and only if it holds for $C^L/K$ and the same extension. 
\end{lemma}

\begin{proof}
Since the root number and terms involving deficiency appear symmetrically between $J$ and $J^L$ in \Cref{Kramer Tunnell}, it suffices to show that 
\[(\Delta_C,L/K)=(\Delta_{C^L},L/K)\]
 and
\[\dim J(K)/\N J(L)\equiv \dim J^L(K)/\N J(L) ~ (\text{mod 2}).\]
For the first equality one checks readily that $\Delta_C$ and $\Delta_{C^L}$ lie in the same class in $K^\times/K^{\times 2}$ (cf. \Cref{quad_twist_hyp_convention}). The second statement follows from \Cref{basic properties of norm map}.
\end{proof}       

\subsection{Second compatibility with quadratic twist}

The second compatibility result involving quadratic twist is more subtle. That such a compatibility result should exist for elliptic curves was  discussed in the original paper  of Kramer and Tunnell \cite[remark following Proposition 3.3]{MR664648} and the result was later proven (again for elliptic curves) by Klagsbrun, Mazur and Rubin \cite[Lemma 5.6]{MR3043582}. The key step in that proof is to establish the following congruence. In order to state it, fix distinct quadratic extensions $L_1/K$ and $L_2/K$, and denote by $L_3/K$   the third quadratic subextension of $L_1L_2/K$. 

\begin{lemma} \label{norm_max_iso_congruence}
Let  $A/K$ be a principally polarised abelian variety.  Then we have
\[\dim A(K)/N_{L_1/K}A(L_1)+\dim A(K)/N_{L_2/K}A(L_2) \equiv  A^{L_1}(K)/N_{L_3/K}A^{L_1}(L_3)~~\textup{(mod 2)}.\]
\end{lemma}

\begin{proof}
The case where $A/K$ is an elliptic curve is \cite[Lemma 5.6]{MR3043582} and the argument is essentially the same. Let $L_0=K$ and for each $i=1,2,3$ identify $A^{L_i}[2]$ with $A[2]$ as $G_K$-modules in the usual way. For each $i$ let   $X_i$ denote the image of $A^{L_i}(K)/2A^{L_i}(K)$ under the map 
\[A^{L_i}(K)/2A^{L_i}(K)\stackrel{\delta^{L_i}}{\longrightarrow} H^1(K,A^{L_i}[2])=H^1(K,A[2]),\]
where $\delta^{L_i}$ is the connecting homomorphism associated to the multiplication-by-$2$ Kummer sequence for $A^{L_i}$. By \cite[Proposition 5.2]{MR2373150}, for $i=1,2,3$ we have 
\[A(K)/N_{L_i/K}A(L_i)\cong X_0/(X_0\cap X_i).\]
Similarly, we have
\[A^{L_1}(K)/N_{L_3/K}A^{L_1}(L_3)\cong X_1/(X_1\cap X_2).\]
In the elliptic curve case treated in \cite{MR3043582} it is shown that each $X_i$ is a maximal isotropic subspace with respect to a certain quadratic form on $H^1(K,A[2])$. The result is then deduced from \cite[Corollary 2.5]{MR3043582} which is a general result concerning the parity of the dimension of  intersections of maximal isotropic subspaces. For general principally polarised abelian varieties, the fact that each $X_i$ is a maximal isotropic subspace for the natural generalisation of this quadratic form   is detailed in \cite[Section 10.1]{MR3951582}.
The one difference from the case of elliptic curves is that now the quadratic form (in general) takes values in $\mathbb{Z}/4\mathbb{Z}$, rather than just   $\mathbb{Z}/2\mathbb{Z}$ as is assumed in \cite[Corollary 2.5]{MR3043582}. However, one readily verifies that this assumption is not used in the proof of  \cite[Corollary 2.5]{MR3043582}.
\end{proof}
 
We now return to the case where $C/K$ is a hyperelliptic curve and $J/K$ is its Jacobian. 

\begin{cor} \label{twist 2}
 \Cref{Kramer Tunnell} for $J/K$ and the extensions $L_1/K$ and $L_2/K$  implies \Cref{Kramer Tunnell} for $J^{L_1}/K$ and the extension $L_3/K$.
\end{cor}

\begin{proof}
By \cite[Proposition 3.11]{MR3552491}, for any quadratic extension $L/K$ we have 
\begin{equation} \label{rooty_root_formula}
w(J/L)=\left((-1)^g,L/K\right)w(J/K)w(J^L/K),
\end{equation}
where $g$ is the genus of $C$ (the cited result is only stated for elliptic curves, but the proof generalises verbatim to give the claimed formula). 
From \eqref{rooty_root_formula} it follows that
\[w(J/L_1)w(J/L_2)=w(J^{L_1}/L_3).\]
Further, by standard properties of Hilbert symbols and the fact that the discriminants of $C$ and any quadratic twist of $C$ differ by squares, we have 
\[(\Delta_C,L_1/K) (\Delta_C,L_2/K) =(\Delta_{C^{L_1
}},L_3/K).\]
Since we also have 
\begin{eqnarray*}
 \epsilon(C/K)+\epsilon(C^L_1/K) + \epsilon(C/K)+\epsilon(C^{L_2}/K) &\equiv&\epsilon(C^{L_1}/K)+\epsilon(C^L_2/K) ~~(\textup{mod }2) \\&=&\epsilon(C^{L_1}/K)+\epsilon((C^{L_1})^{L_3}/K),
 \end{eqnarray*}
the result follows from \Cref{norm_max_iso_congruence}.
\end{proof}

\begin{remark}
For a local field $K$ and hyperelliptic curve $C/K$, it follows from \Cref{twist 1} and \Cref{twist 2} that, in order to prove \Cref{Kramer Tunnell} for $C/K$ and all quadratic extensions of $K$, it suffices to prove the same result but with $C$ replaced by an arbitrary quadratic twist.
\end{remark}

\section{Two torsion in the Jacobian of a hyperelliptic curve} \label{2-tors sect}

For this section let $K$ be a field of characteristic different from $2$. Let  $C/K:y^2=f(x)$ be a hyperelliptic curve of genus $g$ and let $J/K$ be its Jacobian. Denote by $\mathcal{W}$  the $G_K$-set of ramification points of the $x$-coordinate morphism $C\rightarrow \mathbb{P}^1$. Thus $\mathcal{W}$ consists of the points $(r,0)$ for $r$ a root of $f(x)$, along with the unique point at infinity on $C$ if $\textup{deg}(f)$ is odd. As $G_K$-modules we then have 
\begin{equation}\label{2_tors_g_mod_desc}
J[2]\cong \ker\left(\mathbb{F}_2^{\mathcal{W}}\stackrel{\Sigma}{\longrightarrow}\mathbb{F}_2\right)/ \mathbb{F}_2  D.
\end{equation}
 Here $\mathbb{F}_2^{\mathcal{W}}$ denotes the permutation representation over $\mathbb{F}_2$ on the elements of $\mathcal{W}$, $\Sigma:\mathbb{F}_2^{\mathcal{W}}\rightarrow \mathbb{F}_2$ denotes the sum-of-coefficients map, and $D=\sum_{w\in\W} w$. See \cite[Section 6]{MR1465369} for more details.  
Noting that $g\geq 2$, hence $|\mathcal{W}|> 4$,  we see from the above description that $K(J[2])/K$ is the splitting field of $f(x)$. 

We now compute the dimension of the rational $2$-torsion $J(K)[2]$. The case where $K(J[2])/K$ is cyclic is treated  in \cite[Theorem 1.4]{MR1865865} (but note the erratum \cite{MR2169307}) whilst the case where $f(x)$ has an odd degree factor over $K$ is \cite[Lemma 12.9]{MR1465369}. We will require a slightly more general statement. 

In what follows we write $f(x)=c_ff_0(x)$, where $c_f\in K^\times$ is the leading coefficient of $f(x)$, and $f_0(x)\in K[x]$ is monic. To clean up the statement we also make the following convention.

\begin{convention} \label{convention_hyp_factor}
In what follows, if $\text{deg}(f)$ is odd, the rational point at infinity on $C$ is to be interpreted as an odd degree irreducible factor of $f(x)$ over $K$.
\end{convention}

\begin{lemma} \label{two torsion}
Let $n$ be the number of irreducible factors of $f(x)$ over $K$ (see \Cref{convention_hyp_factor} above). If $f(x)$ has an odd degree factor over $K$ then \[\dim J(K)[2]=n-2.\] Otherwise, if each irreducible factor of $f(x)$  has even degree, let $F/K$ be the splitting field of $f(x)$ and let $m$ be the number of quadratic subextensions of $F/K$ over which $f_0(x)$ factors as a product of $2$ distinct conjugate polynomials. Then
\[\dim J(K)[2]= \begin{cases}
n-1 & ~~g~\text{even},\\
n-1+\textup{ord}_2(1+m) & ~~g~\text{odd}.
\end{cases}
\]
\end{lemma}

\begin{proof}
Denote by $G$ the Galois group of $F/K$ and let $M$ be the $G$-module $M=\ker\big(\mathbb{F}_2^\mathcal{W}\stackrel{\Sigma}{\longrightarrow}\mathbb{F}_2\big)$. Then by \eqref{2_tors_g_mod_desc} we have an exact sequence 
\begin{equation}\label{exact sequence}
0\longrightarrow \mathbb{F}_2D \longrightarrow M^G \longrightarrow J[2]^G\longrightarrow \ker\left(H^1(G,\mathbb{F}_2D)\rightarrow H^1(G,M)\right)\longrightarrow 0.
\end{equation}
Now
\[\dim M^G= \ker\big((\F_2^\mathcal{W})^G\stackrel{\Sigma}{\longrightarrow}\F_2\big)=\begin{cases} n-1 & ~~f(x) ~ \text{has an odd degree factor over} ~K,\\  n & ~~\text{else.}\end{cases}\]
Consequently, we must show that $\ker\left(H^1(G,\mathbb{F}_2D)\rightarrow H^1(G,M)\right)$ has dimension $0$ or $\textup{ord}_2(1+m)$ according, respectively, to whether $g$ is even or odd  (note that if $f(x)$ has an odd degree factor over $K$  then $m=0$). 

Now $H^1(G,\F_2D)=\text{Hom}(G,\F_2D)$, and the non-trivial homomorphisms from $G$ into $\F_2D$ correspond to the quadratic subextensions of $F/K$. Let $\phi$ be such a homomorphism, corresponding to a quadratic subextension $E/K$. Then $\phi$ maps to $0$ in $H^1(G,M)$ if and only if there is $\eta\in M$ with $\sigma(\eta)+\eta=\phi(\sigma)D$ for each $\sigma\in G$. Now an element $\eta \in \mathbb{F}_2^{\W}$ satisfying this equation corresponds to a factor of $f_0(x)$ over $E$, $h(x)$ say, for which $f_0(x)=h(x)\cdot \tau h(x)$, where $\tau$ denotes the generator of $\textup{Gal}(E/K)$. Since $\frac{1}{2}|\W|=g+1$,  such an $\eta$ is in the sum-zero part of $\F_2^\W$ if and only if $g$ is odd. We conclude from this that the number of non-identity elements in $\ker\left(H^1(G,\mathbb{F}_2D)\rightarrow H^1(G,M)\right)$ is equal to $0$ if $g$ is even, and $m$ if $g$ is odd. This gives the result.
\end{proof}

Now let $\Delta_{f}$ be the discriminant of $f(x)$.  It is a square in $K$ if and only if the Galois group of $f(x)$ is a subgroup of the alternating group $A_{n}$ where $n=\deg f$. As a corollary of \Cref{two torsion}  we observe that, if $K(J[2])/K$ is cyclic, then whether or not the discriminant of $f(x)$ is a square in $K$ can essentially be detected from the rational $2$-torsion in $J$. In the statement, we continue to impose \Cref{convention_hyp_factor}. 

\begin{cor}\label{two torsion cor}

Suppose $K\left(J[2]\right)/K$ is cyclic. Then $\Delta_{f}$ is a square in $K$ if and only if one of the following holds:

\begin{enumerate}
	\item[\textit{(i)}] $(-1)^{\dim{J(K)[2]}}=1$ and either $g$ is odd or $f(x)$ has an odd degree factor over $K$,
	\item[\textit{(ii)}]  $(-1)^{\dim{J(K)[2]}}=-1$, $g$ is even, and all factors of $f(x)$ over $K$ have even degree. 
\end{enumerate}
\end{cor}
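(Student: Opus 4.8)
The plan is to translate ``$\Delta_f$ is a square'' into a statement about the cyclic Galois group and then read off $\dim J(K)[2]$ from \Cref{two torsion} case by case. Since $g\geq 2$, the field $K(J[2])$ is the splitting field $F$ of $f$, so the hypothesis is precisely that $G:=\mathrm{Gal}(F/K)$ is cyclic; fix a generator $\sigma$. Write $n'$ for the number of irreducible factors of the polynomial $f$ itself (not counting any point at infinity). Each such factor is an orbit of $G$ on the roots, and because $G$ is cyclic, $\sigma$ acts on an orbit of size $d$ as a single $d$-cycle; hence $\sigma$, viewed as a permutation of the $\deg f$ roots of $f$, has sign $\prod_i(-1)^{d_i-1}=(-1)^{\deg f-n'}$. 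As $\Delta_f$ is a square in $K$ exactly when $G\subseteq A_{\deg f}$, i.e. when $\sigma$ is even, I obtain the clean criterion
\[
\Delta_f\in K^{\times 2}\iff \deg f\equiv n'\pmod 2.
\]

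Next I would observe that the two alternatives in the corollary are organised by a single dichotomy: the parenthetical hypothesis in (i) is exactly the negation of the structural hypothesis ``$g$ even and every irreducible factor of $f$ has even degree'' appearing in (ii) (and the latter forces $\deg f$ even). It therefore suffices to show that $\Delta_f\in K^{\times 2}\iff (-1)^{\dim J(K)[2]}=1$ whenever $f$ has an odd degree factor or $g$ is odd, and that $\Delta_f\in K^{\times 2}\iff (-1)^{\dim J(K)[2]}=-1$ in the complementary case. I would then run the case analysis, relating $n'$ to the quantity $n$ of \Cref{two torsion}: they agree when $\deg f$ is even, while $n=n'+1$ when $\deg f$ is odd, the extra factor being the rational point at infinity. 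When $f$ has an odd degree factor one has $\dim J(K)[2]=n-2$, and both parities of $\deg f$ collapse to $\Delta_f\in K^{\times 2}\iff n\ \text{even}\iff(-1)^{\dim J(K)[2]}=1$; the case $g$ even with all factors of even degree (where $\dim J(K)[2]=n-1$) similarly yields the sign $-1$.

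The one case requiring genuine extra input, and the step I expect to be the main obstacle, is $g$ odd with all irreducible factors of even degree, where \Cref{two torsion} gives $\dim J(K)[2]=n+m-1$ and I must pin down the parity of $m$. Here I would use that a cyclic group has at most one subgroup of index $2$, so $F/K$ has at most one quadratic subextension $E$; since all $G$-orbits have even size, $|G|$ is even and such an $E$ exists, corresponding to $H=\langle\sigma^2\rangle$. The key computation is that each $G$-orbit, being a single even cycle under $\sigma$, splits into exactly two $H$-orbits of half the size which are interchanged by $\sigma$; grouping these shows $f=P\cdot\sigma(P)$ over $E$ with $P\neq\sigma(P)$, so $E$ contributes to $m$ and hence $m=1$. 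Then $\dim J(K)[2]=n$, and the criterion above again gives $\Delta_f\in K^{\times 2}\iff(-1)^{\dim J(K)[2]}=1$, completing the verification in this final case and hence the corollary.
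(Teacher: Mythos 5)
Your proposal is correct and follows essentially the same route as the paper: translate squareness of $\Delta_f$ into the sign of a generator $\sigma$ of the cyclic Galois group acting on the roots, compute that sign as $(-1)^{\deg f-n'}$ from the cycle type, and then combine with \Cref{two torsion}, using that a cyclic extension has at most one quadratic subextension and that this subextension yields a factorisation of $f$ into two distinct conjugate polynomials precisely when every irreducible factor of $f$ has even degree. Your treatment of the $g$ odd, all-even-degree-factors case (showing $m=1$ via the splitting of each $\sigma$-orbit into two $\langle\sigma^2\rangle$-orbits) is exactly the content the paper compresses into its final sentence, and your bookkeeping of the point at infinity matches the paper's convention.
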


\begin{proof} 
Let $\sigma$ be a generator of $\text{Gal}\left(K(J[2])/K\right)$.  Then $\Delta_{f}$ is a square in $K$ if and only if $\epsilon(\sigma)=1$, where $\epsilon(\sigma)$ is the sign of $\sigma$ as a permutation on the roots of $f(x)$. Suppose $\sigma$ has cycle type $(d_{1},...,d_{n})$, so that the $d_{i}$ are   the degrees of the irreducible factors of $f(x)$ over $K$. Then we have $\epsilon(\sigma)=(-1)^{\sum_{i=1}^{n}(d_{i}-1)}=(-1)^{\text{deg}f-n}$. Now $J[2]^{\sigma}=J(K)[2]$. Moreover, $K(J[2])/K$ contains at most one quadratic subextension, which yields a factorisation of $f_0(x)$ into $2$ distinct conjugate polynomials if and only if each $d_i$ is even. The result now follows from \Cref{two torsion}.
\end{proof}

\section{Deficiency} \label{deficiency section}

Let $K$ be a local field. Recall from \Cref{Ab_var_notat_intro} that  a (smooth, proper, geometrically connected) curve $X/K$ of genus $g$   is said to be deficient  if $X$ has no $K$-rational divisor of degree $g-1$ . 
In this section we collect some results on deficiency which will be of use later. Firstly, we determine the behaviour of deficiency in field extensions. Next, we give some criteria for determining when a hyperelliptic curve is deficient, which apply in particular when $K(J[2])/K$ is cyclic. Finally, for nonarchimedean base fields we recall a criteria due to Poonen and Stoll which describes deficiency of a general curve in terms of its minimal proper regular model. 

The first $2$ results mentioned above are a consequence of the following description of deficiency, which  arises as part of the proof of \cite[Theorem 11]{MR1740984}. Consider the short exact sequences of $G_K$-modules 
\begin{equation} \label{deficiency_exact_seq_1}
0 \longrightarrow K^s(X)^{\times}/K^{s \times}\stackrel{\text{div}}{\longrightarrow}\text{Div}(X_{K^s})\longrightarrow \text{Pic}(X_{K^s}) \longrightarrow 0
\end{equation} 
and 
\begin{equation} \label{deficiency_exact_seq_2}
0 \longrightarrow K^{s \times}\longrightarrow K^s(X)^{\times}\longrightarrow K^s(X)^{\times}/K^{s \times} \longrightarrow 0.
\end{equation}
Here $K^s(X)$ is the function field of $X$ over the separable closure of $K$, $\textup{Div}(X_{K^s})$ is the group of divisors on the base-change of $X$ to $K^s$, $\textup{Pic}(X_{K^s})$ is the Picard group of $X_{K^s}$, and the map $\textup{div}$ sends a rational function on $X_{K^s}$ to its associated divisor. 

As explained in the proof of \cite[Theorem 11]{MR1740984}, combining the associated long exact sequences for Galois cohomology we obtain an exact sequence
\begin{equation*}
0\longrightarrow \text{Pic}(X) \longrightarrow \text{Pic}(X_{K^s})^{G_K} \longrightarrow \text{Br}(K),
\end{equation*} 
where $\textup{Br}(K)=H^2(K,K^{s \times})$ denotes the Brauer group of $K$.
\begin{notation}
We denote by $\phi_K$ the composition  
\[\phi_K:\text{Pic}(X_{K^s})^{G_K} \longrightarrow \text{Br}(K)\stackrel{\textup{inv}_K}{\longrightarrow}\mathbb{Q}/\mathbb{Z}, \]
where the first map is the one constructed above, and the second is the local invariant map.
\end{notation}
By a result of Lichtenbaum  \cite{MR242831} (see also \cite[Section 4]{MR1740984}) $X$  has a $K$-rational divisor class   of degree $g-1$. Fix such a class $\mathcal{L} \in \text{Pic}^{g-1}(X_{K^s})^{G_K}$. In the proof of \cite[Theorem 11]{MR1740984}, Poonen--Stoll show that  $(g-1)\phi_K (\mathcal{L})\in \{0,1/2\}$, and that  $X$ is deficient over $K$  if and only if $(g-1)\phi_K(\mathcal{L})=1/2$.

\subsection{Deficiency in field extensions}
 
Recall that $\epsilon(X/K)\in \{0,1\}$ is defined to be equal to $1$ if $X$ is deficient over $K$, and equal to $0$ otherwise.

\begin{lemma} \label{deficiency in extensions}
For any finite extension $L/K$ we have
\[ \epsilon(X/L)\equiv [L:K]\epsilon(X/K) ~~\textup{(mod 2)}.\]
\end{lemma}

\begin{proof}
Fix  a rational divisor class $\mathcal{L}$ of degree $g-1$ in $\text{Pic}(X_{K^s})^{G_K}$, so that  $(g-1)\phi_K(\mathcal{L}) \in \mathbb{Q}/\mathbb{Z}$ is equal to $\frac{1}{2}$ (resp.  $0$) if $X$ is deficient over $K$ (resp. is not deficient over $K$). Then $\mathcal{L}$ also gives a rational divisor class of degree $g-1$ in $\text{Pic}(X_{K^s})^{G_L}$, and commutativity of the diagram   
\begin{equation} \label{brauer diagram}
\xymatrix{\text{Br}(K)\ar[d]\ar[r]^{\text{inv}_{K}}\ar[d]^{\text{res}} & \mathbb{Q}/\mathbb{Z}\ar[d]^{[L:K]}\\
\text{Br}(L)\ar[r]^{\text{inv}_{L}} & \mathbb{Q}/\mathbb{Z}
}
\end{equation}
(see e.g. \cite[Proposition XIII.3.7]{MR554237}) shows that $(g-1)\phi_L(\mathcal{L})=[L:K](g-1)\phi_K(\mathcal{L})$.  
\end{proof}

\subsection{Deficiency for hyperelliptic curves}
Now suppose that the characteristic of $K$ is different from $2$. Take $C/K$ to be a hyperelliptic curve of genus $g$ and fix a Weierstrass equation $y^2=f(x)$ for $C$. Since  $C$ has $K$-rational divisors of degree 2 (arising as the pull-back of rational points on $\mathbb{P}^1_K$),  if  $g$ is odd then $C$ is not deficient. Consequently, we impose the following assumption.

\begin{assumption}
For the rest of this subsection, suppose that $g$ is even.
\end{assumption}  

Again using that $C$ has $K$-rational divisors of degree 2, we see under this assumption that having a $K$-rational divisor of degree $g-1$ is equivalent to having a $K$-rational divisor of any odd degree, which is in turn equivalent to having a rational point over some odd degree extension of $K$. In particular, if $f(x)$ has an odd degree factor over $K$ then $C$ is not deficient. 

Now write $f(x)=c_ff_0(x)$, where $c_f$ is the leading coefficient of $f(x)$ and $f_0(x)$ is monic. The following proposition gives a convenient criterion for testing deficiency in the special case that $f_0(x)$ factors as a product of $2$ conjugate polynomials over some quadratic extension of $K$.

\begin{proposition}\label{deficiency lemma} 
Suppose  $f_0(x)$ factors over a quadratic extension $F/K$ as a product of $2$ polynomials conjugate under the action of $\textup{Gal}(F/K)$. Then $C$ is deficient if and only if $(c_f,F/K)=-1$. 
\end{proposition}

\begin{proof}
Over $F$, write  $f_0(x)= f_{0a}(x)f_{0b}(x)$ where $f_{0a}(x)$ and $f_{0b}(x)$ are monic and conjugate under the action of $\textup{Gal}(F/K)$. As $g$ is assumed even, both $f_{0a}(x)$ and $f_{0b}(x)$ necessarily have odd degree $g+1$. For each root $r$ of $f(x)$, write $P_r=(r,0)\in C(K^s)$. For $\star\in \{a,b\}$, consider the degree $g+1$ divisor  
\[D_\star=\sum_{r\textup{ root of }f_{0\star}(x)}P_r\quad \quad \in \textup{Div}(C_{K^s}).\]
Denote by $\chi :G_K\rightarrow \{\pm 1\}$ the quadratic character corresponding to $F/K$. Then for all $\sigma \in G_K$ we have
\[\sigma(D_a)=\begin{cases} D_a~~&~~\chi(\sigma)=1,\\D_b~~&~~\chi(\sigma)=-1.\end{cases}\]
Since $\textup{div}(y/f_{0b}(x))=D_a-D_b$ we see that the class of $D_a$ in $\textup{Pic}(C_{K^s})$, which we denote $[D_a]$, is invariant under $G_K$. Further, we see that  under the connecting map $\textup{Pic}(C_{K^s})^{G_K}\rightarrow H^1(K,K^s(C)^{\times}/K^{s \times})$ associated to \eqref{deficiency_exact_seq_1}, the class $[D_a]$ maps to the class of the $1$-cocycle $\rho$ defined by 
 \[\rho(\sigma)=\begin{cases}1~~&~~\chi(\sigma)=1,\\ y/f_{0b}(x)~~&~~\chi(\sigma)=-1. \end{cases}\]
 This lifts via the same formula to a $1$-cochain valued in $K^s(C)^{\times}$. The image of  $\rho$ under the connecting map $H^1(K,K^s(C)^{\times}/K^{s \times})\rightarrow H^2(K,K^{s \times})=\textup{Br}(K)$   associated to $\eqref{deficiency_exact_seq_2}$ is thus represented by the class of the $2$-cocycle $\alpha$ defined by  $\alpha(\sigma,\tau)=\rho(\sigma) \cdot {}^\sigma \rho(\tau) \cdot \rho(\sigma \tau)^{-1}$. A straightforward computation shows that $\alpha(\sigma,\tau)=1$ unless $\chi(\sigma)=-1=\chi(\tau)$, in which case it is equal to 
\begin{equation*}
\frac{y}{f_{0b}(x)}\cdot \frac{y}{f_{0a}(x)}=\frac{y^2}{f_0(x)}=c_f.
\end{equation*}
Under $\text{inv}_K:\text{Br}(K)\rightarrow\ \mathbb{Q}/\mathbb{Z}$, the class of this $2$-cocycle is mapped to  0 if $c_f$ is a norm from $F^{\times}$, and to $1/2$ otherwise. Thus  $(g+1)\phi([D_a])=1/2$ if and only if  $(c_f,F/K)=-1$.
\end{proof}

\begin{remark}
As in \Cref{2-tors sect}, let $\mathcal{W}$ denote the $G_K$-set of ramification points of the $x$-coordinate morphism $C\rightarrow \mathbb{P}^1_K$. Further, let $\mathcal{T}$ denote the quotient of the sum-$1$ part of the permutation module $\mathbb{F}_2^\mathcal{W}$ by the diagonal action of $\mathbb{F}_2$. We see from \eqref{2_tors_g_mod_desc} that $\mathcal{T}$ is a torsor under $J[2]$. In fact, $\mathcal{T}$ can be identified as a $G_K$-set with the collection of theta characteristic on $C_{K^s}$ (see \cite[Section 4]{Mum71} and recall that $g$ is assumed even). From this we see that $C$ has a $K$-rational theta characteristic if and only if either $f(x)$ has an odd degree factor over $K$, or $f_0(x)$  factors as a product of $2$  conjugate polynomials over some quadratic extension of $K$.  Thus the results above concerning deficiency apply precisely when $C$ has a $K$-rational theta characteristic.
\end{remark}

\begin{cor}\label{cyclic deficiency lemma}
Suppose that $K(J[2])/K$ is cyclic. If $C$ is deficient over $K$ then ($g$ is even and) every irreducible factor of $f(x)$ over $K$ has even degree. When this is the case, denote by   $F/K$ the unique quadratic subextension of $K(J[2])/K$. Then $C$ is deficient if and only if $(c_f,F/K)=-1$.
\end{cor}

\begin{proof}
As noted above we may assume that each irreducible factor of $f(x)$ over $K$ has even degree, in which case $f(x)$ has degree $2g+2$. The assumption that $K(J[2])/K$ is cyclic now ensures that there is indeed a unique quadratic subextension of $K(J[2])/K$, $F/K$ say, and that $f_0(x)$ factors into $2$ conjugate  polynomials over $F$. The  result now follows from \Cref{deficiency lemma}.  
\end{proof}

\begin{remark} \label{using_cyclic_def_rmk}
Suppose we are in the situation of \Cref{cyclic deficiency lemma}, so that ($g$ is even and) $K(J[2])/K$ is cyclic. Let $L/K$ be a quadratic extension, say $L=K(\sqrt{d})$ for some $d\in K^\times$. Then the quadratic twist of $C$ by $L/K$ has Weierstrass equation $C^L:y^2=df(x)$. As above, we have $\epsilon(C/K)=0=\epsilon(C^L/K)$ unless every irreducible factor of $f(x)$ over $K$ has even degree. In this latter case, with $F/K$ as in the statement of \Cref{cyclic deficiency lemma}, we see from that result that 
\[(-1)^{\epsilon(C/K)+\epsilon(C^L/K)}=(c_f,F/K)(dc_f,F/K)=(d,F/K).\]
\end{remark}
 
\subsection{Deficiency in terms of the minimal proper regular model}
Now suppose that $K$ is a  nonarchimedean local field, possibly of characteristic $2$,  and that $X/K$ is a  (not necessarily hyperelliptic) curve  of genus $g$. To conclude the section we recall a characterisation of deficiency in terms of the minimal proper regular model of $X$. We will make extensive use of this criterion later. 

Let $\mathcal{X}/\mathcal{O}_{K}$ denote the minimal proper regular model of $X$, and let $\mathcal{X}_{\bar{k}}$ denote the base-change to $\bar{k}$ of its special fibre. Let $\{\Gamma_i\}_{i\in I}$ denote the set of irreducible components of $\mathcal{X}_{\bar{k}}$. For each $i\in I$, let $d_i$ denote the multiplicity of $\Gamma_i$ in $\mathcal{X}_{\bar{k}}$, and let $\textup{Orb}_{G_k}(\Gamma_i)$ denote the $G_k$-orbit of $\Gamma_i$.  


\begin{lemma} \label{min reg def remark} \label{min reg def lemma}
The curve $X$ is deficient over $K$ if and only if 
\[\textup{gcd}_{i\in I}\{d_i\cdot |\textup{Orb}_{G_k}(\Gamma_i)|\}\]
 does not divide $g-1$.
\end{lemma}

\begin{proof}  
This is observed by  Poonen--Stoll; see \cite{MR1740984}, remark following the proof of Lemma 16.
\end{proof}

\begin{remark}
When $X/K$ is a hyperelliptic curve we have $\textup{gcd}_{i\in I}\{d_i\cdot |\textup{Orb}_{G_k}(\Gamma_i)|\}\in \{1,2\}$. This follows from \cite[Corollary 1.5]{MR1717533} and the fact that all hyperelliptic curves have closed points of degree dividing $2$.
\end{remark}


\section{First cases of Conjecture 1.7} \label{first cases}

In this section we prove \Cref{Kramer Tunnell} in two cases: when $K$ is archimedean, and when $K$ has odd residue characteristic and $J/K$ has good reduction. It will turn out that these are the only cases needed to prove \Cref{global to local_intro} (in fact, even the case of archimedean $K$ is not necessary for this). 

\subsection{Archimedean local fields}   

Here we consider \Cref{Kramer Tunnell}  for archimedean local fields. Clearly the only case of interest is the extension $\mathbb{C}/\mathbb{R}$.   Let $C/\mathbb{R}$ be a hyperelliptic curve of genus $g$ and let $J/\mathbb{R}$ be its Jacobian. 

\begin{proposition} \label{kramer-tunnell for reals places}
\Cref{Kramer Tunnell} holds for  $C/\mathbb{R}$ and the extension $\mathbb{C}/\mathbb{R}$.
\end{proposition}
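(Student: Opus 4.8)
The only nontrivial extension is $\mathbb{C}/\mathbb{R}$, and the plan is to show that both sides of \Cref{Kramer Tunnell} equal $+1$. For the left-hand side I would argue that the twist leaves the archimedean root number unchanged. Writing $J$ for the Jacobian, the representation $H^1$ of $J_{/\mathbb{R}}$, viewed as a representation of the Weil group $W_{\mathbb{R}}$, is a direct sum of $g$ copies of the $2$-dimensional representation induced from $W_{\mathbb{C}}$ attached to the Hodge type $(1,0)+(0,1)$ (complex conjugation interchanges the two Hodge lines, forcing the induced structure). Each such induced representation is isomorphic to its own twist by the quadratic character of $W_{\mathbb{R}}$ cutting out $\mathbb{C}/\mathbb{R}$, since this character restricts trivially to $W_{\mathbb{C}}$. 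As twisting $J$ by $\mathbb{C}/\mathbb{R}$ tensors $H^1$ by exactly this character, $w(J^{\mathbb{C}}/\mathbb{R})=w(J/\mathbb{R})$, whence $w(J/\mathbb{R})w(J^{\mathbb{C}}/\mathbb{R})=w(J/\mathbb{R})^2=1$.

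It then remains to show the right-hand side is $+1$. Write $C:y^2=af(x)$ with $f$ monic, let $s$ be the number of conjugate pairs of complex roots of $f$ and $r$ the number of real roots. The Hilbert symbol is the easiest term: since $(\cdot,\mathbb{C}/\mathbb{R})$ is the sign character and $\mathrm{sign}(\Delta_C)=(-1)^s$ (each conjugate pair contributes a negative factor $(\alpha-\bar\alpha)^2<0$ to the discriminant, and rescaling by $a$ only changes $\Delta_C$ by a square), we have $((-1)^g\Delta_C,\mathbb{C}/\mathbb{R})=(-1)^{g+s}$. For the deficiency terms I would use that $\mathbb{R}(J[2])/\mathbb{R}$ is automatically cyclic of degree $1$ or $2$, so \Cref{cyclic deficiency lemma} applies: for $g$ even it shows $C$ is deficient precisely when $f$ has no real root and $a<0$, i.e. exactly when $C(\mathbb{R})=\emptyset$, and for $g$ odd neither $C$ nor $C^{\mathbb{C}}$ is deficient. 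Since twisting sends $a\mapsto -a$ while fixing the roots, at most one of $C,C^{\mathbb{C}}$ is deficient, and only in the all-complex-root case.

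The substantive term is the norm index $\dim_{\mathbb{F}_2}J(\mathbb{R})/N_{\mathbb{C}/\mathbb{R}}J(\mathbb{C})$. Here I would first identify it with $\log_2\#\pi_0(J(\mathbb{R}))$: the norm map $P\mapsto P+\bar P$ has connected (divisible) image containing $2J(\mathbb{R})$, and since $2J(\mathbb{R})$ is exactly the identity component $J(\mathbb{R})^0$ by the structure isomorphism $J(\mathbb{R})\cong(\mathbb{R}/\mathbb{Z})^g\times(\mathbb{Z}/2)^k$ of \Cref{cokernel of mult by 2}, the image is $J(\mathbb{R})^0$ and the cokernel is $\pi_0(J(\mathbb{R}))\cong(\mathbb{Z}/2)^k$. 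Thus $\dim_{\mathbb{F}_2}J(\mathbb{R})/N_{\mathbb{C}/\mathbb{R}}J(\mathbb{C})=k=\dim_{\mathbb{F}_2}J(\mathbb{R})[2]-g$, and I can compute $\dim_{\mathbb{F}_2}J(\mathbb{R})[2]$ directly from the real factorisation of $f$ via \Cref{two torsion}. When $f$ has an odd-degree factor over $\mathbb{R}$ (equivalently $r\geq1$, or $\deg f$ odd) this gives $k=g-s$; when every factor is even (the all-complex-root case, $s=g+1$) it gives $k=0$ for $g$ even and $k=1$ for $g$ odd, the discrepancy coming precisely from the $m=1$ correction term of \Cref{two torsion} for the subextension $\mathbb{C}/\mathbb{R}$.

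Assembling, and writing $\delta(C)\in\{0,1\}$ for the exponent with $i_d(C)=(-1)^{\delta(C)}$, the right-hand side is $(-1)^E$ with $E=(g+s)+\delta(C)+\delta(C^{\mathbb{C}})+k$, and the proof concludes by checking $E\equiv 0\pmod 2$ in two cases. If $f$ has an odd real (or odd-degree) factor then both deficiency terms vanish and $E=(g+s)+(g-s)=2g$; in the all-complex-root case $s=g+1$, and for $g$ even one deficiency term contributes while $k=0$, giving $E=2g+2$, whereas for $g$ odd no deficiency term contributes but $k=1$, again giving $E=2g+2$. In all cases $E$ is even, so the right-hand side is $+1$. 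I expect the main obstacle to be getting the norm index right in the all-complex-root case: naively one might expect $J(\mathbb{R})$ to be connected whenever $C(\mathbb{R})=\emptyset$, but the parity of $g$ intervenes (reflecting the monodromy of the two points at infinity), and it is exactly this parity-dependent term, interacting with the deficiency, that makes the delicate case balance.
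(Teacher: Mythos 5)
Your proof is correct and follows essentially the same route as the paper: the same identification of $N_{\mathbb{C}/\mathbb{R}}J(\mathbb{C})$ with the identity component of $J(\mathbb{R})$ via the structure theorem, the same use of \Cref{two torsion} and \Cref{cyclic deficiency lemma} to balance the two-torsion, discriminant and deficiency terms, and the same (standard) vanishing of the product of archimedean root numbers, which you derive from the Weil-group description where the paper cites a reference. The only cosmetic difference is that you run the case analysis by hand where the paper packages it into \Cref{two torsion cor}.
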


\begin{proof}
We have $w(J/\mathbb{C})=(-1)^{g}$ (see, for example, \cite[Lemma 2.1]{MR2309184}). Further, by \cite[Lemma 10.9 (ii)]{MR3951582} we have $|J(\mathbb{R})/N_{\mathbb{C}/\mathbb{R}} J(\mathbb{C}) |=2^{-g}|J(\mathbb{R})[2]|$.  Denote by $J_{-1}$ the quadratic twist of $J$ by $\mathbb{C}/\mathbb{R}$, and and denote by $C_{-1}$ the quadratic twist of $C$ by $\mathbb{C}/\mathbb{R}$ similarly.  To verify \Cref{Kramer Tunnell} we must show that 
\begin{equation*}
(-1)^{\dim J(\mathbb{R})[2]}=(\Delta_C,\mathbb{C}/\mathbb{R})(-1)^{\epsilon(C/K)+\epsilon(C_{-1}/K)}.
\end{equation*}

Now  $\mathbb{R}(J[2])/\mathbb{R}$ is cyclic, and  $(\Delta_C,\mathbb{C}/\mathbb{R})=1$ if and only if $\Delta_C$ is a square in $\mathbb{R}$. Consequently, \Cref{two torsion cor} gives $(-1)^{\dim J(\mathbb{R})[2]}=(\Delta_C,\mathbb{C}/\mathbb{R})$, except when $g$ is even and all irreducible factors of $f(x)$ over $\mathbb{R}$ have even degree. In this latter case the two expressions differ by a sign. Since by \Cref{cyclic deficiency lemma} (cf. also \Cref{using_cyclic_def_rmk}) this is exactly the case where $\epsilon(C/K)+\epsilon(C_{-1}/K)=1$, we have the result.      
\end{proof}

\subsection{Good reduction in odd residue characteristic} \label{odd good}

Suppose now that $L/K$ is a quadratic extension of nonarchimedean local fields of odd residue characteristic. Let $C/K$ be a hyperelliptic curve and $J/K$ its Jacobian.  We denote by $v$  the normalised valuation on $K$.

\begin{proposition} \label{kramer-tunnell for good reduction in odd residue char}
Suppose that $J$ has good reduction over $K$. Then \Cref{Kramer Tunnell} holds for $C$ and the extension $L/K$. 
\end{proposition}

\begin{proof}
Since $J$ has good reduction over $K$ we have $w(J/L)=1$, so we are reduced to showing that
\begin{equation*}
(-1)^{\dim J(K)/\N J(L)}=(\Delta_C,L/K)(-1)^{\epsilon(C/K)+\epsilon(C^L/K)}.
\end{equation*}
Suppose first that $L/K$ is unramified. Then \cite[Lemma 10.9 (i)]{MR3951582}  gives $(-1)^{\dim J(K)/\N J(L)}=1$ (this goes back to a result of Mazur \cite[Corollary 4.4]{MR0444670}). Moreover, the assumption  on the reduction of $J$ implies that $K(J[2])/K$ is unramified. Thus adjoining a square root of $\Delta_C$ to $K$ produces an unramified extension. In particular, $v(\Delta_C)$ is even and  $(\Delta_C,L/K)=1$. Finally, \Cref{cyclic deficiency lemma} gives $(-1)^{\epsilon(C/K)+\epsilon(C^L/K)}=1$ also.

Now suppose $L/K$ is ramified. This time, \cite[Lemma 10.9 (i)]{MR3951582}  gives \[(-1)^{\dim J(K)/\N J(L)}=(-1)^{\dim J(K)[2]}.\] Moreover, as $v(\Delta_C)$ is even we have $(\Delta_C,L/K)=1$ if and only if $\Delta_C$ is a square in $K$. We now conclude by \Cref{two torsion cor} and \Cref{cyclic deficiency lemma}.
\end{proof}


\section{Global conjectures imply instances of Conjecture 1.7} \label{global to local section}

We have already proven enough cases of \Cref{Kramer Tunnell} to prove \Cref{global to local_intro}.

\begin{theorem}[=\Cref{global to local_intro}] \label{global to local}
Let $K$ be a number field, $C/K$ a hyperelliptic curve, $J/K$ its Jacobian and $v_0$ a place of $K$. If the $2$-parity conjecture holds for $J$ over every quadratic extension $F/K$, then \Cref{Kramer Tunnell} holds for $J/K_{v_0}$ and every quadratic extension $L/K_{v_0}$. 
\end{theorem}      

\begin{proof}
Let $L/K_{v_0}$ be a quadratic extension, and write $L=K_{v_0}(\sqrt{\alpha})$. Let $S$ be a finite set of places of $K$ containing all places where $J$ has bad reduction, all places dividing 2, and all archimedean places. Set $T=S-\{v_0\}$. 

Now let $F/K$ be a quadratic extension such that:
\begin{itemize}
\item each place $v\in T$ splits in $F/K$, 
\item there is exactly one place $\mathfrak{v}_0$ of $F$ extending $v_0$,
\item  we have $F_{\mathfrak{v}_0}=L$.
\end{itemize}
 Explicitly, we may take $F=K(\sqrt{\beta})$ where $\beta \in K$ is chosen, by weak approximation, to be sufficiently close to $\alpha$ $v_0$-adically, and sufficiently close to $1$ $v$-adically for all $v\in T$. 
 
With such an extension $F/K$ chosen, for a place $v$ of $K$ which is non-split in $F/K$, denote by $\mathfrak{v}$ the unique place of $F$ extending $v$. Then (cf. \Cref{selmer decomposition}) the products
\[ 
\prod_{\substack{v\textup{ place of }K\\\textup{non-split in }F/K}} \hspace{-12pt}w(J/F_\mathfrak{v}) \quad\text{ and }~~ \quad \prod_{\substack{v\textup{ place of }K\\\textup{non-split in }F/K}} \hspace{-12pt} (\Delta_C,F_\mathfrak{v}/K_v)(-1)^{\dim J(K_v)/N_{F_\mathfrak{v}/K_v}J(F_\mathfrak{v}) +\epsilon(C/K_v)+\epsilon(C^F/K_v)}
\]
are equal to $w(J/F)$ and $(-1)^{\text{rk}_2(J/F)}$ respectively, hence agree under the assumption that the $2$-parity conjecture holds for $J$ over $F$. 

 On the other hand, by \Cref{kramer-tunnell for good reduction in odd residue char} and our assumptions on $F/K$, the individual contributions to these products at a  place $v$ agree, save possibly at $v=v_0$. Thus the contributions at $v=v_0$ must agree also.
\end{proof}

\section{Unramified extensions} \label{main unramified section}

Let $K$ be a nonarchimedean local field of characteristic different from $2$. In this section we begin the study of \Cref{Kramer Tunnell} for  unramified quadratic extensions. Thus we fix a hyperelliptic curve $C/K$, and denote by $L/K$ the unique  unramified quadratic extension of $K$. As usual, we denote by $J/K$ the Jacobian of $C$. Across Sections  \ref{main unramified section} and \ref{proof of compatibility} we will prove the following:

\begin{proposition} \label{prop:unram_cases_of_conjecture}
\Cref{Kramer Tunnell}  for $C/K$ and the extension $L/K$ holds in each of the following cases:
\begin{itemize}
\item[(i)] the residue characteristic of $K$ is odd,
\item[(ii)] the residue characteristic of $K$ is $2$ and  $J/K$  has good reduction,
\item[(iii)] the residue characteristic of $K$ is $2$, $C$ has genus $2$, and $J/K$ has semistable reduction.
\end{itemize}
\end{proposition}

To prove this, the key fact we will exploit is that the formation of N\'{e}ron models and minimal regular models commutes with unramified base-change. As $L/K$ is assumed unramified this makes the  quantities appearing in  \Cref{Kramer Tunnell} comparatively easy to describe and relate to one another (in particular, it allows us to readily relate invariants of $C$ to invariants of the quadratic twist of $C$ by $L/K$). This enables us to reduce \Cref{Kramer Tunnell} to a statement which  depends only on the curve $C$ considered over the maximal unramified extension of $K$; see \Cref{unramifed reduction}. We then prove this statement under the conditions of  \Cref{prop:unram_cases_of_conjecture}.

 Denote by $k$ the residue field of $K$, and denote by $k_L$ the residue field of $L$.   Further, denote by $\mathfrak{f}(J/K)$   the conductor of $J$, and denote by $\Phi$  the component group of the special fibre of the N\'{e}ron model of $J$.

\begin{lemma} \label{unram root numbers and norm}
We have
\[w(J/L)=(-1)^{\mathfrak{f}(J/K)}\]
and 
\[\dim J(K)/\N J(L)= \dim H^1(k_L/k,\Phi(k_L)).\] 
\end{lemma}

\begin{proof}
For the statement about root numbers see \cite[Corollary 4.6]{MR3860395}. The statement concerning the norm map follows from \Cref{basic properties of norm map} and \cite[Proposition 4.3]{MR0444670}. 
\end{proof}  

\Cref{unram root numbers and norm} describes two of the terms appearing in \Cref{Kramer Tunnell}. Moreover, as $L/K$ is unramified  we have
\begin{equation}
\left(\Delta_C,L/K\right)=(-1)^{v(\Delta_C)},
\end{equation}
where $v$ denotes the normalised valuation on $K$.
We thus see that  \Cref{Kramer Tunnell} for $C$ and $L/K$ is equivalent to the assertion  
\begin{equation} \label{unramified kramer tunnell_1}
\mathfrak{f}(J/K)\stackrel{?}{\equiv} v(\Delta_C)+\dim  H^1(k_L/k,\Phi(k_L))+\epsilon(C/K)+\epsilon(C^L/K)~~~\text{ (mod 2)}.
\end{equation}
Since $\mathfrak{f}(J/K)$ and $v$ are unchanged under unramified extension, this predicts that the quantity 
\begin{equation} \label{unramified norm quantity}
\dim  H^1(k_L/k,\Phi(k_L))+\epsilon(C/K)+\epsilon(C^L/K)~~~\text{ (mod 2)}
\end{equation}
is unchanged   upon replacing $K$ by a finite unramified extension $F$, and replacing $L$ by the unique quadratic unramified extension $F'/F$. In fact, we can use this observation to predict a simpler expression for \eqref{unramified norm quantity}. We begin with some notation. 

\begin{notation} \label{notat:epsilon_C}
Denote by $\mathcal{C}$ the minimal regular model of $C$ over $\mathcal{O}_K$. For each irreducible component $\Gamma$ of $\mathcal{C}_{\bar{k}}$, write $d(\Gamma)$ for its multiplicity. Further, denote by $\iota$ the automorphism of $\mathcal{C}_{\bar{k}}$ induced from the hyperelliptic involution on $C$ (which extends to an automorphim of $\mathcal{C}$ by uniqueness of the minimal regular model). We then define
\[\eta(C)=\begin{cases}1~~&~~g\textup{ even and }  |\textup{orb}_\iota(\Gamma)|\cdot d(\Gamma)~\equiv 0~~(\textup{mod }2) \textup{ for each irred. cmpt. }\Gamma\textup{ of }\mathcal{C}_{\bar{k}},\\0~~&~~\textup{otherwise,} \end{cases}\]
where here $\textup{orb}_\iota(\Gamma)$ denotes the orbit of an irreducible component $\Gamma$ under the action of $\iota$.
\end{notation}
Now suppose that $F/K$ is a sufficiently large even-degree unramified extension so that  $G_{k_F}$ acts trivially on both $\Phi(\bar{k})$ and on the set of irreducible components of $\mathcal{C}_{\bar{k}}$. Let $F'/F$ be the unique quadratic unramified extension. Then we have 
\begin{equation} \label{invariants_after_large_extension}
H^1(k_{F'}/k_F,\Phi(k_{F'}))\cong\Phi(\bar{k})[2],\quad  \quad \epsilon(C/F)=0 \quad \textup{ and } \quad \epsilon(C^{F'}/F)=\eta(C).
\end{equation}
 The first equality follows from our assumptions on the $G_{k_F}$-action on $\Phi(\bar{k})$, along with the description of the cohomology of cyclic groups given in \cite[Section 8]{MR0219512}. The second equality follows from \Cref{deficiency in extensions} since $F/K$ is assumed to have even degree. The third equality follows from   \Cref{min reg def remark}, our assumptions on the $G_{k_F}$-action on the irreducible components of $\mathcal{C}_{\bar{k}}$, and the fact that the formation of the minimal regular model commutes with unramified base-change. Indeed, this last fact allows us to identify the geometric special fibre of the minimal regular model of $C^L/K$ with that of $C/K$, save with $G_k$-action twisted by $\iota$.
 
 From \eqref{invariants_after_large_extension} we find
\[\dim H^1(k_{F'}/k_F,\Phi(k_{F'}))+\epsilon(C/F)+\epsilon(C^{F'}/F)=\Phi(\bar{k})[2]+ \eta(C).\]
 Consequently, the discussion preceding \Cref{notat:epsilon_C} predicts the following identity, which we will give an unconditional proof of.
  
\begin{proposition} \label{unchangedness of norm}
With the notation above, we have
\[\dim  H^1(L/K,\Phi(k_L))+\epsilon(C/K)+\epsilon(C^L/K) \equiv \dim \Phi(\bar{k})[2]+ \eta(C)~~\textup{ (mod 2)}.\] 
\end{proposition}

The proof of \Cref{unchangedness of norm} that we will give is somewhat lengthy and we postpone it to the next section.   

An immediate corollary of \Cref{unchangedness of norm} is the following:

\begin{cor} \label{unramifed reduction}
 \Cref{Kramer Tunnell} holds for $C/K$ and the extension $L/K$  if and only if
\begin{equation} \label{unramified claim}
\mathfrak{f}(J/K) \equiv v(\Delta_C)  +\dim \Phi(\bar{k})[2]+ \eta(C)~~\textup{ (mod 2)}.
\end{equation}
\end{cor} 

\begin{proof}
Combine (the discussion surrounding) \eqref{unramified kramer tunnell_1} with \Cref{unchangedness of norm}.
\end{proof}

\begin{remark} \label{is_the_congruence_easier}
In the statement of \Cref{unchangedness of norm} it is not simply true that \[\dim H^1(L/K,\Phi(k_L)) \equiv \dim\Phi(\bar{k})[2] ~~\textup{ (mod 2)}\]
and \[\epsilon(C/K)+\epsilon(C^L/K) \equiv \eta(C)~~\textup{ (mod 2)},\]
as the following example shows. 
\end{remark}

\begin{example} \label{unram_genus_2_ub_example}
Consider the genus $2$  hyperelliptic curve 
\[C/\mathbb{Q}_3:y^2=(x^2+3)((x-i)^2-3^2)((x+i)^2-3^2),\]
 where $i$ is a square root of $-1$.  Reducing the defining equation modulo $3$ gives a semistable curve. In fact, the above equation  viewed as a scheme over $\mathbb{Z}_3$, along with the usual chart at infinity, gives the stable model of $C$.  After base-changing to $\mathbb{Z}_3^{\textup{nr}}$ its special fibre consists of $2$ rational curves, intersecting transversally at the $3$ points $(0,0)$, $(i,0)$ and $(-i,0)$. The final two  intersection points are swapped by the Frobenius element $F\in G_k$, whilst the hyperelliptic involution $\iota: (x,y)\mapsto (x,-y)$ fixes each intersection point but  swaps the  $2$ components. The minimal proper regular model   of $C$ over $\mathbb{Z}_3^{\textup{nr}}$ is  obtained by blowing up once each at the intersection points  $(i,0)$ and $(-i,0)$. Its special fibre, along with the actions of $F$ and $\iota$, is thus as pictured:
 \vspace{-5pt}
 \begin{center}
\begin{figure}[!htbp]
\begin{tikzpicture}[scale=0.6]
\draw [line width=1pt] (1.1,-3.2)-- node[above]{} node[below, font=\small]{} ++(5,1);  
\draw [line width=1pt] (1.1,-2.9)-- node[above]{} node[below, font=\small]{} ++(5,-1);  
\draw  [line width=1pt] (4,-2.1)-- ++(0,-2); 
\draw  [line width=1pt] (5.5,-1.9)--   ++(0,-2.4); 
\draw [to-to](4,-1.5)-- node[above]{$F$} (5.5,-1.5);
\draw [to-to](6.6,-2)--  (6.6,-4);
\draw (7,-3) node {$\iota$};
\end{tikzpicture}
\end{figure}
\end{center}
Here each component pictured is a rational curve of multiplicity $1$.
Write $L= \mathbb{Q}_3(i)$ for the unique  unramified quadratic extension of $\mathbb{Q}_3$.
 Since $F$ fixes the $2$ components drawn horizontally we see from \Cref{min reg def lemma} that $\epsilon(C/\mathbb{Q}_3)=0$. Similarly, we have $\eta(C)=0$ since $\iota$ fixes the $2$ components drawn vertically. However, each $\iota \circ F$-orbit of components has size $2$, so appealing to \Cref{min reg def lemma} once more we find $\epsilon(C^L/\mathbb{Q}_3)=1$.   
 Thus  \[\epsilon(C/K)+\epsilon(C^L/K) \not \equiv \eta(C)~~\textup{ (mod 2)}.\]
 
 However, one can also show (e.g. using the description of $\Phi(\bar{k})$ detailed later in   \Cref{sec:component_group_abstract_description}) that $\Phi(\bar{k})\cong \mathbb{Z}/8\mathbb{Z}$ with   $F$  acting as multiplication by $5$. Thus we have 
 \[\dim \Phi(\bar{k})[2] =1\quad \textup{ and }\quad \dim H^1(L/K,\Phi(k_L))=0.\]  In particular, \Cref{unchangedness of norm} holds in this example, even though neither of the individual congruences in \Cref{is_the_congruence_easier} hold.
\end{example}

\subsection{Establishing \Cref{unramified claim} in odd residue characteristic}

Assume  that the residue characteristic of $K$ is odd. Under this assumption we now establish the congruence \Cref{unramified claim}.

\begin{lemma} \label{KES lemma}
We have 
\[\mathfrak{f}(J/K)=\mathfrak{f}(J[2])+\dim\Phi(\bar{k})[2],\]
where here $\mathfrak{f}(J[2])$ denotes the Artin conductor of the $G_K$-module $J[2]$. 
\end{lemma}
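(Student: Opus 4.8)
The plan is to compute both conductors from the action of the ramification filtration on the $2$-adic Tate module $T_2J$ and on its reduction $J[2]=T_2J/2T_2J$, exploiting that the residue characteristic is odd so that $2$ is invertible on the relevant wild inertia quotients. Passing to $K^{\mathrm{nr}}$ (which changes none of the three quantities), I write $I=\mathrm{Gal}(\bar K/K^{\mathrm{nr}})$ for inertia and $P\subseteq I$ for wild inertia. Using the intrinsic upper-numbering filtration $\{G_K^u\}$ I would decompose each Artin conductor into a tame part plus a Swan part,
\[\mathfrak{f}(J/K)=\operatorname{codim}_{V_2J}(V_2J)^{I}+\mathrm{Sw}(V_2J),\qquad \mathfrak{f}(J[2])=\operatorname{codim}_{J[2]}J[2]^{I}+\mathrm{Sw}(J[2]),\]
where $V_2J=T_2J\otimes\mathbb{Q}_2$, noting that both representations have dimension $2g$.

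The first step is to show the two Swan parts agree. Since $P$ is pro-$p$ with $p$ odd, its image in $\mathrm{Aut}(T_2J)=\mathrm{GL}_{2g}(\mathbb{Z}_2)$ meets the pro-$2$ principal congruence subgroup $\ker(\mathrm{GL}_{2g}(\mathbb{Z}_2)\to\mathrm{GL}_{2g}(\mathbb{F}_2))$ trivially, so this image is a finite group of odd order; the same then holds for the image of each $G_K^u$ with $u>0$. For a finite odd-order group the averaging idempotent lies in the corresponding group ring over $\mathbb{Z}_2$, so taking $G_K^u$-invariants is exact and commutes with reduction modulo $2$; hence $\dim_{\mathbb{Q}_2}(V_2J)^{G_K^u}=\dim_{\mathbb{F}_2}J[2]^{G_K^u}$ for every $u>0$. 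Integrating over the filtration gives $\mathrm{Sw}(V_2J)=\mathrm{Sw}(J[2])$. Phrasing this through the intrinsic upper-numbering filtration is what lets me avoid comparing the a priori different splitting fields of $V_2J$ and $J[2]$.

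It then remains to compare the tame parts, i.e.\ to compute $\dim_{\mathbb{F}_2}J[2]^{I}-\dim_{\mathbb{Q}_2}(V_2J)^{I}$. Taking $I$-cohomology of $0\to T_2J\xrightarrow{2}T_2J\to J[2]\to0$ yields a short exact sequence $0\to (T_2J)^{I}/2\to J[2]^{I}\to H^1(I,T_2J)[2]\to0$, and since $\dim_{\mathbb{F}_2}(T_2J)^{I}/2=\mathrm{rank}_{\mathbb{Z}_2}(T_2J)^{I}=\dim_{\mathbb{Q}_2}(V_2J)^{I}$, this difference is exactly $\dim_{\mathbb{F}_2}H^1(I,T_2J)[2]$. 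Combining with the Swan computation gives $\mathfrak{f}(J/K)-\mathfrak{f}(J[2])=\dim_{\mathbb{F}_2}H^1(I,T_2J)[2]$.

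The final step, and the one I expect to be the main obstacle to state and cite cleanly, is to identify this with $\dim_{\mathbb{F}_2}\Phi(\bar k)[2]$. Here I would invoke Grothendieck's description of the component group in SGA 7 (valid for $\ell\neq p$), giving a canonical isomorphism between the $\ell$-primary part $\Phi(\bar k)(\ell)$ and the torsion subgroup of $H^1(I,T_\ell J)$; as the free part of $H^1(I,T_2J)$ carries no $2$-torsion, this yields $H^1(I,T_2J)[2]\cong\Phi(\bar k)[2]$ and completes the proof. One can sanity-check the entire chain on a Tate curve, where $\Phi(\bar k)=\mathbb{Z}/v(q)\mathbb{Z}$ and the monodromy computation reproduces the claimed $2$-torsion. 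The delicate points are pinning down the precise form and reference of the Grothendieck identification, and verifying the averaging/Swan-invariance argument at the level of the ramification filtration rather than of individual elements.
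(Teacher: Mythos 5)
Your argument is correct, but note that the paper does not actually prove this lemma: its ``proof'' is a one-line citation to \cite[Lemma 4.2]{KES14}. So rather than matching an argument in the text, you have reconstructed the standard proof that lies behind the citation, and each of your three steps checks out. The splitting $\mathfrak{f}=(\text{tame})+(\text{Swan})$ for both $V_2J$ and $J[2]$ is the usual one; the Swan parts agree because every $G_K^u$ with $u>0$ lies in wild inertia, hence has finite odd-order image in $\mathrm{Aut}(T_2J)$, so the averaging idempotent lies in $\mathbb{Z}_2[\,\cdot\,]$ and invariants commute with reduction mod $2$ (this is precisely where $p\neq 2$ enters, as the paper's remark after the lemma warns); and the Kummer sequence for $T_2J$ converts the difference of tame parts into $\dim_{\mathbb{F}_2}H^1(I,T_2J)[2]$, using that $(T_2J)^I$ is a free $\mathbb{Z}_2$-module whose rank is $\dim_{\mathbb{Q}_2}(V_2J)^I$. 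The only delicate point is the one you flag: the identification $H^1(I,T_2J)_{\mathrm{tors}}\cong\Phi(\bar{k})(2)$ is indeed Grothendieck's result (SGA 7$_{\mathrm{I}}$, Exp.~IX, 11.3.8); alternatively one can see it directly, since $H^1(I,T_2J)_{\mathrm{tors}}$ is the image of $(V_2J/T_2J)^{I}=J(K^{\mathrm{nr}})[2^{\infty}]$ under the connecting map, and the kernel of $J(K^{\mathrm{nr}})[2^{\infty}]\twoheadrightarrow\Phi(\bar{k})(2)$ is exactly the $2$-divisible part because multiplication by $2$ is surjective on the identity component of the special fibre when $p$ is odd. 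Any ambiguity in the literature as to whether the statement concerns $J$ or its dual is harmless here: $J$ is a principally polarised Jacobian and only the order of $\Phi(\bar{k})[2]$ is used. In short, your proof is a valid, self-contained replacement for the external citation in \Cref{KES lemma}.
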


\begin{proof}
This is observed by \v{C}esnavi\v{c}ius in \cite[Lemma 4.2]{MR3860395} (we remark that the cited result uses the assumption that $K$ has odd residue characteristic).
\end{proof}

In light of \Cref{KES lemma}, to establish \Cref{unramified claim} it remains to show that 
\begin{equation}  \label{yet_another)unram_simplificaiton}
\mathfrak{f}(J[2]) \equiv v(\Delta_C)+\eta(C) ~~ \text{ (mod 2)}.
\end{equation}

Fix a Weierstrass equation $C/K:y^2=f(x)$ for $C$, where $f(x)\in K[x]$ is a squarefree polynomial of (without loss of generality) even degree $2g+2$. Let $E/K^{\text{nr}}$ be the field extension $E=K^{\text{nr}}(J[2])$, and set $G=\text{Gal}(E/K^{\textup{nr}})$. As explained in \Cref{2-tors sect}, $E$ coincides with the splitting field of $f(x)$ over $K^{\text{nr}}$ (recall that we are assuming $g\geq 2$). Let $G=G_0 \vartriangleright G_1 \vartriangleright G_2 \vartriangleright ...$ be the ramification filtration on $G$, and write $g_i=|G_i|$. Thus $G_1$ is the wild inertia group of $E/K^{\text{nr}}$ and is a $p$-group where $p=\text{char}(k)$ (so in particular has odd order), and $G/G_1$ is cyclic. Let $\mathcal{R}$ denote the $G$-set of roots of $f(x)$ in $E$. By definition we have
 \[\mathfrak{f}(J[2])=\sum_{i=0}^{\infty} \frac{g_i}{g_0} \textup{codim}J[2]^{G_i}.\]

\begin{notation}
With $f(x)\in K[x]$ as above, define $\eta(f)\in \{0,1\}$ to be equal to $1$ if the genus $g$ of $C$ is even and each irreducible factor of $f(x)$ over $K^{\text{nr}}$ has even degree. Otherwise, set $\eta(f)=0$.
\end{notation}

\begin{lemma} \label{comparison of conductors}
Let $V=\mathbb{C}[\mathcal{R}]$ be the complex permutation representation of $G$ associated to the  set of roots  of $f(x)$. Then we have
\[\mathfrak{f}(J[2])\equiv \mathfrak{f}(V)+\eta(f)~~(\textup{mod }2).\]
\end{lemma}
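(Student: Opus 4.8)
The plan is to compute both conductors directly from the ramification filtration via $\mathfrak{f}(W)=\sum_{i\geq 0}\frac{g_i}{g_0}\,\mathrm{codim}\,W^{G_i}$ and to compare the two term by term, orbit-theoretically. The basic input is the $G$-module description $J[2]\cong M/\mathbb{F}_2[\Delta]$ with $M=\ker\big(\Sigma\colon\mathbb{F}_2[\W]\to\mathbb{F}_2\big)$, together with the fact that for the complex permutation representation $\dim_{\mathbb{C}}V^{G_i}$ equals the number of $G_i$-orbits on $\W$. Since $\dim_{\mathbb{F}_2}J[2]=2g$ and $|\W|=2g+2$, the whole comparison reduces to relating $\dim_{\mathbb{F}_2}J[2]^{G_i}$ to the number of $G_i$-orbits on $\W$.

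The key simplification occurs at the higher ramification groups. For $i\geq 1$ the group $G_i$ is contained in the wild inertia $G_1$, which is a $p$-group with $p$ odd; hence $|G_i|$ is odd, all higher cohomology of $G_i$ on $\mathbb{F}_2$-modules vanishes, and taking $G_i$-invariants is exact. First I would push the two short exact sequences $0\to M\to\mathbb{F}_2[\W]\to\mathbb{F}_2\to 0$ and $0\to\mathbb{F}_2[\Delta]\to M\to J[2]\to 0$ through $G_i$-invariants: because every $G_i$-orbit ($i\geq1$) has odd length, $\Sigma$ stays surjective on invariants, and one gets $\dim_{\mathbb{F}_2}J[2]^{G_i}=(\text{number of }G_i\text{-orbits on }\W)-2$. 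Comparing with $\dim_{\mathbb{C}}V^{G_i}$ shows $\mathrm{codim}_{\mathbb{F}_2}J[2]^{G_i}=\mathrm{codim}_{\mathbb{C}}V^{G_i}$ for every $i\geq 1$, so all of these terms cancel and $\mathfrak{f}(J[2])-\mathfrak{f}(V)$ collapses to the single inertia term at $i=0$.

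It then remains to evaluate the $G_0$-term. Here $\dim_{\mathbb{C}}V^{G_0}$ is the number $n$ of irreducible factors of $f$ over $K^{\text{nr}}$, while $\dim_{\mathbb{F}_2}J[2]^{G_0}=\dim_{\mathbb{F}_2}J(K^{\text{nr}})[2]$ is exactly what \Cref{two torsion} computes. I would substitute the three cases of that lemma --- $f$ has an odd-degree factor, all factors even with $g$ even, all factors even with $g$ odd --- and read off that the resulting discrepancy in $\mathrm{codim}$ at $G_0$ is precisely $\epsilon(f)$. Concretely this amounts to checking that $\dim_{\mathbb{F}_2}J(K^{\text{nr}})[2]\equiv n+\epsilon(f)\pmod 2$, which is immediate in the first two cases.

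The hard part is genuinely confined to this last step, for two reasons. The $i=0$ term is the one place where $G_0$ can have even order, so invariants are no longer exact and the naive additivity of $\mathfrak{f}$ along the two sequences fails; the failure is governed by $H^1(G_0,\mathbb{F}_2[\Delta])=\mathrm{Hom}(G_0,\mathbb{F}_2)$ and by whether $\Sigma$ remains surjective on $G_0$-invariants, which is exactly the cohomological bookkeeping already carried out in the proof of \Cref{two torsion}. The delicate case is all factors even with $g$ odd, where \Cref{two torsion} contributes the count $m$ of quadratic subextensions of the splitting field over which $f$ splits into conjugate halves; one must verify that $m$ enters with the correct parity, so that it cancels against the contribution of $\Sigma$ and leaves only $\epsilon(f)$. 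Since the identity is ultimately used only modulo $2$, it suffices to track everything modulo $2$, and it is precisely the $G_0$-computation via \Cref{two torsion} that makes the parities come out right.
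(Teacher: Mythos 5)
Your overall strategy is exactly the paper's: expand both conductors via the ramification filtration, observe that for $i\geq 1$ the group $G_i$ has odd order so that $\mathrm{codim}_{\mathbb{F}_2}J[2]^{G_i}=\mathrm{codim}_{\mathbb{C}}V^{G_i}$ (you get this from exactness of $G_i$-invariants and surjectivity of $\Sigma$ on invariants; the paper gets it by quoting \Cref{two torsion}, since all irreducible factors of $f$ over $E^{G_i}$ have odd degree --- these are the same computation), and then reduce everything to the single claim $\dim_{\mathbb{F}_2}J[2]^{G_0}\equiv n+\epsilon(f)\pmod 2$ at the inertia term, which you correctly dispatch in two of the three cases of \Cref{two torbion}. (One small point: as in the paper's own proof, what actually comes out of this bookkeeping is the congruence $\mathfrak{f}(J[2])\equiv\mathfrak{f}(V)+\epsilon(f)\pmod 2$ rather than a literal equality, but that is all that is used downstream.)

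The gap is that you name the only genuinely substantive step --- the case where $g$ is odd and every irreducible factor of $f$ over $K^{\mathrm{nr}}$ has even degree --- without carrying it out. There \Cref{two torsion} gives $\dim_{\mathbb{F}_2}J[2]^{G_0}=n+m-1$, and what you must prove is that $m$ is odd; "verifying that $m$ enters with the correct parity" is precisely the content of the lemma at this point, and it is not a formal consequence of the cohomological bookkeeping in \Cref{two torsion}, which merely defines $m$. It is also the one place where the local structure of $K^{\mathrm{nr}}$ is used in an essential way (over a general base field $m$ can be even). The paper's argument is in two steps: first, $E/K^{\mathrm{nr}}$ has \emph{at most one} quadratic subextension, because any such subextension lies inside $E^{G_1}$ and $E^{G_1}/K^{\mathrm{nr}}$ is cyclic (the wild inertia $G_1$ being a normal $p$-group of odd order); second, $f$ really does split into two distinct conjugate factors over that subextension: one groups the irreducible factors of $f$ over $E^{G_1}$ (all of odd degree) into $\sigma$-orbits, each of even length by the hypothesis on the factors over $K^{\mathrm{nr}}$, and takes $h=\prod_i\prod_{j\ \mathrm{odd}}h_{i,j}$, which satisfies $\sigma^2(h)=h$, $\sigma(h)\neq h$ and $f=h\,\sigma(h)$. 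Hence $m=1$. Without this argument the proof is incomplete.
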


\begin{proof}
This will follow from the definitions of $\mathfrak{f}(J[2])$ and $\mathfrak{f}(V)$, along with a comparison between $\text{codim}_\mathbb{C}V^{G_i}$ and $\text{codim}_{\mathbb{F}_2}J[2]^{G_i}$ for each $i$ (afforded by \Cref{two torsion}).

First let $i\geq1$ so that $G_i$ has odd order. Then necessarily $f(x)$ has an odd degree factor over $E^{G_i}$, so it follows from \Cref{two torsion} that $\dim_{\mathbb{F}_2}J[2]^{G_i}=\dim_{\mathbb{C}}V^{G_i}-2$. Since also $\dim_{\mathbb{F}_2}J[2]=\dim_{\mathbb{C}}V-2$  we see that
\[\sum_{i=1}^{\infty}\frac{g_i}{g_0}\text{codim}_{\F_2}J[2]^{G_i}=\sum_{i=1}^{\infty}\frac{g_i}{g_0}\text{codim}_{\mathbb{C}}V^{G_i}.\] 
It remains  to show that
\[\text{codim}_{\F_2}J[2]^G\equiv\text{codim}_{\mathbb{C}}V^G+\eta(f)~~~\textup{(mod  ~2)}.\]

When $g$ is even, \Cref{two torsion} gives $\dim_{\F_2}J[2]^G=\dim_{\mathbb{C}}V^G-2+\eta(f)$ and we are done. So suppose that $g$ is odd. If $f(x)$ has an odd degree factor over $K^{\text{nr}}$ then again we conclude immediately from \Cref{two torsion}. Finally, suppose each irreducible factor of $f(x)$ over $K^{\text{nr}}$ has even degree. Write $f(x)=c_ff_0(x)$ where $c_f$ is the leading coefficient of $f(x)$ and $f_0(x)$ is monic. Applying \Cref{two torsion} once again it suffices to show that there is a unique quadratic subextension of $E/K^\text{nr}$ over which $f_0(x)$ factors into $2$ distinct, conjugate polynomials. To see this, first note that there is a unique quadratic subextension of $E/K^\text{nr}$. Indeed, any such extension must necessarily be contained in $E^{G_1}$, and $E^{G_1}/K^{\text{nr}}$ is cyclic and (by the assumption on the degrees of the irreducible factors of $f(x)$ over $K^{\textup{nr}}$) has even degree. To see that $f_0(x)$ admits the required factorisation over this extension, let $S=\{h_1,...,h_l\}$ be the set of irreducible factors of $f_0(x)$ over $E^{G_1}$, each of which necessarily has odd degree. The cyclic group $G/G_1$ acts on $S$ and since each factor of $f_0(x)$ over $K^{\text{nr}}$ has even degree, each orbit of $G/G_1$ on $S$ has even order. Denote these disjoint orbits by $S_1,...,S_t$, and for $1\leq i \leq t$ write $S_i=\{h_{i,1}(x),...,h_{i,d_i}(x)\}$. Fix a generator $\sigma$ of $G/G_1$ and assume without loss of generality that $\sigma h_{i,j} (x)=h_{i,j+1(\text{mod}~d_i)}(x)$.Then the polynomial
\[h(x)=\prod_{i=1}^{t}\prod_{j~\text{odd}}h_{i,j}(x)\]
is fixed by $\sigma^2$, has $\sigma h(x)\neq h(x)$, and is such that $f_0(x)=h(x)\cdot \sigma h(x)$.        
\end{proof} 

Next, we relate the Artin conductor of $V=\mathbb{C}[\mathcal{R}]$ to the discriminant $\Delta_f$ of $f(x)$. 

\begin{lemma} \label{lem:conductor_congruence}
Let $V=\mathbb{C}[\mathcal{R}]$ be as above.  Then 
\[\mathfrak{f}(V) \equiv v(\Delta_f) ~~~\textup{(mod 2)}.\] 
\end{lemma}

\begin{proof}
Write  $f(x)=f_1(x)...f_t(x)$ as a product of irreducible polynomials over $K$, and write $\mathcal{R}_i$ for the set of roots of $f_i(x)$ in $E$.  Then $V$ is a direct sum of the permutation modules $V_i=\mathbb{C}[\mathcal{R}_i]$, so $\mathfrak{f}(V)$ is the sum of the $\mathfrak{f}(V_i)$. Further,  since for polynomials $h_1,h_2$ we have $\Delta_{h_1h_2}=\Delta_{h_1}\Delta_{h_2} \text{Res}(h_1,h_2)^2$  (here $\text{Res}(h_1,h_2)$ denotes the resultant of $h_1,h_2$), we see that the discriminant of $f(x)$ is, up to squares in $K$, the product of the discriminants of the $f_i(x)$. In this way we  reduce to the case where $f(x)$ is irreducible, which we treat now. 

Assuming that $f(x)$ is irreducible, let $F/K$ be the splitting field of $f(x)$ and let $H$ be the stabiliser in $\textup{Gal}(F/K)$ of a root $r\in \mathcal{R}$. Then $V \cong \mathbb{C}[\textup{Gal}(F/K)/H]$ so by the conductor-discriminant formula \cite[VI.2 corollary to Proposition 4]{MR554237} we have $\mathfrak{f}(V)=v(\Delta_{F^{H}/K})$, where $\Delta_{F^{H}/K}$ denotes the discriminant of $F^{H}/K$. Since $F^{H}=K(r)$ we have $v(\Delta_{F^{H}/K}) \equiv v(\Delta_f) ~\text{(mod 2)}$, as desired.  
\end{proof}

Combined, \Cref{comparison of conductors,lem:conductor_congruence} establish the congruence 
\begin{equation} \label{conductor_vs_disc_odd_res}
\mathfrak{f}(J[2])\equiv v(\Delta_f)+\eta(f) ~~\textup{ (mod 2)}.
\end{equation}
To prove \Cref{unramified claim} it remains to reinterpret the `correction' term $\eta(f)$. 

\begin{lemma} \label{geom def}
Let $F/K$ be a finite unramified extension and denote by $F'/F$ the unique quadratic unramified extension of $F$. Then, provided $F/K$ is sufficiently  large,  we have $\eta(f)=\epsilon(C^{F'}/F).$ In particular,  we have $\eta(f)=\eta(C)$.
\end{lemma}

\begin{proof}
Arguing as in the proof of \Cref{comparison of conductors} we see that $f(x)$ either has an odd degree factor over   $K^{\text{nr}}$, or factors as a product of $2$ conjugate polynomials over the unique quadratic extension of  $K^{\text{nr}}$. From this we deduce that for every sufficiently large unramified extension $F/K$, either $f(x)$ has an odd degree factor over $F$, or $f(x)$ factors as a product of $2$ conjugate polynomials over some  totally ramified quadratic extension $L/F$ (the latter happening if and only if each irreducible factor of $f(x)$ over $K^{\text{nr}}$ has even degree). In the latter case, by enlarging $F/K$ further if necessary we may also assume that the leading coefficient of $f(x)$ is a norm from this quadratic extension. Since the quadratic twist $C^{F'}/F$ is given by the equation $C^{F'}:y^2=uf(x)$ where $u$ is a non-square unit in $F$, the claim that $\eta(f)=\epsilon(C^{F'}/F)$ follows from \Cref{deficiency lemma} and the fact that $(u,L/F)=-1$. That $\eta(f)=\eta(C)$ now follows from \eqref{invariants_after_large_extension}.
\end{proof}

\begin{cor} \label{unramified kramer tunnell}
Under the assumption that $K$ has odd residue characteristic, \eqref{unramified claim} holds for $C$.
\end{cor} 

\begin{proof}
 \Cref{geom def} allows us to replace $\eta(f)$ with $\eta(C)$ in \eqref{conductor_vs_disc_odd_res}, hence establishing \eqref{yet_another)unram_simplificaiton}. Combining this with \Cref{KES lemma} gives the result. 
\end{proof}

\subsection{Residue characteristic $2$}

 We now give certain conditions under which  the congruence \eqref{unramified claim} holds (or rather, can be shown to hold) when the residue characteristic of $K$ is $2$. Thus for the rest of this subsection we assume that $K$ is a finite extension of $\mathbb{Q}_2$. 
 
 \subsubsection{Good reduction of the Jacobian}
 If the Jacobian $J/K$ of $C$ has good reduction then we have
\begin{equation} \label{trivial_quantities}
\mathfrak{f}(J/K)=0\quad \textup{ and } \quad \Phi(\bar{k})=0.
\end{equation}
 Moreover, we have the following: 
\begin{proposition} \label{valuation of discriminant}
Under the assumption that $J/K$ has good reduction, we have
\[v(\Delta_C)\equiv 0~~ ~~(\textup{mod }2).\]
\end{proposition}
\begin{proof}
Let $\mathcal{J}/\mathcal{O}_{K}$ be the N\'{e}ron model of $J$. The assumption that $J$ has good reduction over
$K$ implies that $\mathcal{J}[2]$ is a finite flat group scheme over $\mathcal{O}_{K}$ \cite[Proposition 20.7]{MR861974}. Letting $e$ denote the absolute ramification index of $K$, it is a theorem of Fontaine that $G_{K}^{u}$ acts trivially on $\mathcal{J}[2](\bar{K})=J[2]$ provided $u>2e-1$ \cite[Th{\'e}or{\`e}me  A]{MR807070}. Note that we are using Serre's upper numbering for the higher ramification groups. Let $F=K\left(\sqrt{\Delta_{C}}\right)$ and $G=\text{Gal}(F/K)$, noting that $F\subseteq K(J[2])$. Combining the above discussion with Herbrand's theorem (see, for example,  \cite[IV, Lemma 3.5]{MR554237}) we see that $G^{u}$ is trivial for $u\geq2e$. In particular, the conductor  of $F/K$, which we denote $\mathfrak{f}(F/K)$, satisfies $\mathfrak{f}(F/K)\leq2e$. 

Now suppose, for contradiction, that $v(\Delta_{C})$ is odd. We thus have $F=K\left(\sqrt{\pi}\right)$ for some uniformiser $\pi$ of $K$. Letting $\sigma$ denote the
non-trivial element of $G$ this gives \[v_{F}\big(\sigma\left(\sqrt{\pi}\right)-\sqrt{\pi}\big)=v_{F}(2)+1=2e+1,\]
where $v_F$ denotes the normalised valuation on $F$.
 From this we obtain $\mathfrak{f}(F/K)=2e+1$, contradicting the bound above. Thus  $v(\Delta_C)$ is even.
\end{proof}

\begin{remark}
This proposition is trivially true also if $J$ has good reduction and the residue characteristic of $K$ is odd. Indeed, then $J[2]$ is unramified  so  $\Delta_C$ is a square in $K^{\text{nr}}$. In particular, $\Delta_C$ has even valuation. 
\end{remark}

\begin{lemma} \label{vanishing_epsilon_good_red_jac}
Under the assumption that $J/K$ has good reduction, we have $\eta(C)=0$.
\end{lemma}

\begin{proof} 
 Let $\mathcal{C}/\mathcal{O}_K$ denote the minimal regular model of $C$. Since $J$ has good reduction, the curve $C/K$ is semistable and the dual graph  of $\mathcal{C}_{\bar{k}}$ is a tree \cite[Proposition 10.1.51]{MR1917232}.\footnote{By definition, the dual graph of $\mathcal{C}_{\bar{k}}$ is the finite connected graph with a vertex for each irreducible component of $\mathcal{C}_{\bar{k}}$, and  such that vertices corresponding to components $\Gamma_1$ and $\Gamma_2$ are joined by one edge for each ordinary double point of $\mathcal{C}_{\bar{k}}$ lying on both $\Gamma_1$ and $\Gamma_2$.} Moreover, as there are no exceptional curves in $\mathcal{C}_{\bar{k}}$, each leaf corresponds to a positive genus component (which necessarily has multiplicity $1$). Since the quotient of $\mathcal{C}_{\bar{k}}$ by the hyperelliptic involution has arithmetic genus zero, the hyperelliptic involution necessarily fixes every leaf, hence acts trivially on the dual graph. 
\end{proof}

\begin{cor} \label{eq:equation_holds_with_good_char_2}
If $J/K$ has good reduction then \eqref{unramified claim} holds for $C$.
\end{cor}

\begin{proof}
Combine \eqref{trivial_quantities} with \Cref{valuation of discriminant} and \Cref{vanishing_epsilon_good_red_jac}.
\end{proof}

\subsubsection{Semistable curves of genus $2$}

When the genus of $C$ is $2$ we can draw on results of Liu  \cite{MR1302311} to establish additional cases of  \eqref{unramified claim}.

\begin{proposition} \label{equation_genus_2_prop}
Suppose that $C/K$ is semistable and has genus $2$.  Then \eqref{unramified claim} holds for $C$.
\end{proposition}

\begin{proof}
This follows from  Liu's genus $2$ version of Ogg's formula   \cite[Theoreme 1]{MR1302311}. Specifically, combining Theoreme 1, Theoreme 2 and Proposition 1 of \cite{MR1302311}, one obtains (independently of the residue characteristic of $K$) 
\[f(J/K) \equiv v(\Delta_C)+n-1+\frac{d-1}{2}~~\textup{ (mod 2)}\]
where $n$ is the number of irreducible components of $\mathcal{C}_{\bar{k}}$ (as usual $\mathcal{C}$ denotes the minimal regular model of $C$ over $\mathcal{O}_K$) and where $d$  is  defined in the statement of Liu's Theoreme 1. In \cite[Section 5.2]{MR1302311}, Liu computes the term $\frac{d-1}{2}$ in a large number of cases, though not all in residue characteristic $2$, in terms of the structure of $\mathcal{C}_{\bar{k}}$ (more specifically, in terms of the `type' of $\mathcal{C}_{\bar{k}}$ as classified in \cite{MR0369362} and \cite{MR0201437}). This includes in particular all cases where $C/K$ has semistable reduction. It is then easy to establish \Cref{unramified claim} for all semistable curves of genus $2$ by combining this with the description,  detailed in \cite[Section 8]{MR1285783}, of the component group of a genus $2$ curve in terms of its type. 
\end{proof}

\subsection{Proof of  \Cref{prop:unram_cases_of_conjecture}} 
The above results combine to prove \Cref{prop:unram_cases_of_conjecture}, conditional on the soon to be proven \Cref{unchangedness of norm}.
 \begin{proof}[Proof of \Cref{prop:unram_cases_of_conjecture} (conditional on \Cref{unchangedness of norm})]
 Combine \Cref{unramifed reduction} with: \Cref{unramified kramer tunnell} in Case (i), \Cref{eq:equation_holds_with_good_char_2} in Case (ii), and \Cref{equation_genus_2_prop} in Case (iii).
\end{proof}

\section{The proof of Proposition 9.8} \label{proof of compatibility}

Maintaining the setup of the previous section, we now turn to proving \Cref{unchangedness of norm}.  We will  deduce this from \Cref{homomorphism theorem}. This is a result applying to arbitrary curves, and  which may be of independent interest. We begin by recalling a well-known description of the component group $\Phi(\bar{k})$ of the Jacobian of a (not necessarily hyperelliptic) curve in terms of its minimal regular model.

\subsection{The component group via the minimal regular model}  \label{sec:component_group_abstract_description} 

See \cite[Section 1]{MR1717533} and \cite[Chapter 9]{MR1045822} for details of what follows. Let $X$ be a smooth, proper, geometrically connected curve of genus $g$ over $K$, let $\mathcal{X}/\mathcal{O}_K$ be its minimal proper regular model, and let $\mathcal{X}_{\bar{k}}$ denote the special fibre of $\mathcal{X}$, base-changed to $\bar{k}$. Let $\{\Gamma_i\}_{i\in I}$   be the set of irreducible components of $\mathcal{X}_{\bar{k}}$. For each $i\in I$, denote by $d_i$ the multiplicity of $\Gamma_i$. Let $\mathbb{Z}^I$ denote the free $\mathbb{Z}$-module on the $\Gamma_i$ and define $\alpha:\mathbb{Z}^I\rightarrow\mathbb{Z}^I$ by
\begin{equation*}
\alpha(D)=\sum_{i\in I}(D\cdot\Gamma_i)\Gamma_i    
\end{equation*}
where  $D\cdot\Gamma_i$ is the intersection number between $D$ and $\Gamma_i$. Further, define $\beta:\mathbb{Z}^I\rightarrow\mathbb{Z}$ by setting $\beta(\Gamma_i)=d_i$ and extending linearly. We have  $ \text{im}(\alpha)\subseteq \ker(\beta)$. The natural action of $G_k$ on $\mathcal{X}_{\bar{k}}$ makes $\mathbb{Z}^I$ into a  $G_k$-module, and $\alpha$ is equivariant for this action. Endowing $\mathbb{Z}$ with trivial $G_k$-action, the same is true of $\beta$. In this way, both $\text{im}(\alpha)$ and $\ker(\beta)$ become $G_k$-modules.

Let $J/K$ be the Jacobian of $X$, and denote by $\Phi$ its N\'{e}ron component group. As explained in  \cite[Theorem 1.1]{MR1717533}, by work of Raynaud we have an exact sequence of $G_k$-modules
\begin{equation} \label{the_component_group_explicit_sequence}
0\longrightarrow \text{im}(\alpha)\longrightarrow \ker(\beta) \longrightarrow \Phi(\bar{k}) \longrightarrow 0.
\end{equation}
Denoting by $F\in G_k$ the Frobenius element, we thus have
\[\left| \Phi(k) \right|=\left| \left(\ker(\beta)/ \text{im}(\alpha)\right)^{F}\right|.\]
Note that $F$ acts on $\mathbb{Z}^I$ as a permutation of $I$, commuting with $\alpha$ and $\beta$, and preserving the arithmetic genus of components.

We recall also from \Cref{min reg def remark} that $X$ is deficient over $K$ if and only if
\[\text{gcd}_{i\in I}\left\{d_i \cdot |\text{orb}_{F}(\Gamma_i)|\right\}\]
does not divide $g-1$, where $\text{orb}_{F}(\Gamma_i)$ denotes the $F$-orbit of $\Gamma_i$. 
 
 \subsection{The result for general curves: statement}
 
Maintaining the notation of the previous subsection, denote by $\mathfrak{G}$ the group of all permutations of $I$ commuting with the maps $\alpha$ and $\beta$ and preserving the arithmetic genus of components. By assumption, $\mathfrak{G}$ acts on $\textup{im}(\alpha)$ and $\ker(\beta)$. Via the sequence \eqref{the_component_group_explicit_sequence} we have an induced action of $\mathfrak{G}$  on $\Phi(\bar{k})$. For $\sigma \in \mathfrak{G}$ we define 
\[q(\sigma)=\begin{cases}1~~&~~\textup{gcd}_{i\in I}\{d_i \cdot |\textup{orb}_{\sigma}(\Gamma_i)|\} ~\textup{ divides } ~g-1,\\
2~~&~~\textup{otherwise.} \end{cases}\]
In particular, viewing the Frobenius $F\in G_k$ as an element of $\mathfrak{G}$, we have $q(F)=2^{\epsilon(X/K)}$.
 
We will obtain \Cref{unchangedness of norm} as a consequence of the following:

\begin{theorem} \label{homomorphism theorem}
 Let $\mathfrak{G}$ be as above.  Then the map
$D:\mathfrak{G}\rightarrow \mathbb{Q}^{\times}/\mathbb{Q}^{\times 2}$
defined by
\[D(\sigma)=q(\sigma) \cdot \frac{\left|\Phi(\bar{k})\right|}{\left|\Phi(\bar{k})^\sigma\right|}\]
is a homomorphism.
\end{theorem}

Before proving \Cref{homomorphism theorem}, we explain how to use it to deduce \Cref{unchangedness of norm}. 

\subsection{Deducing \Cref{unchangedness of norm} from \Cref{homomorphism theorem}.} 

\begin{proof}[Proof of \Cref{unchangedness of norm} (conditional on  \Cref{homomorphism theorem})]
Maintaining the notation above, suppose $X=C$ is a hyperelliptic curve over $K$. The hyperelliptic involution $\iota$ of $C$ extends to an automorphism of the minimal regular model of $C$, and may therefore be viewed as an element of $\mathfrak{G}$. Moreover, as the induced automorphism $\iota_{*}$ of the Jacobian of $C$ is multiplication by $-1$, the action on $\Phi$ induced by $\iota$ is multiplication by $-1$ also (cf. proof of \cite[Theorem 1.1]{MR1717533}). Thus
\[\Phi(\bar{k})[2]=\Phi(\bar{k})^\iota.\]
Let  $L$ denote the unique quadratic unramified extension of $K$ and, as usual, denote by $F$ the Frobenius element in $G_k$. Then we have 
\[\left| H^1\left(\text{Gal}(k_L/k),\Phi(k_L)\right) \right|=\left| \frac{\ker\left(1+F| \Phi(\bar{k})^{F^2}\right)}{\text{im}\left(1-F|\Phi(\bar{k})^{F^2}\right)}\right|=\frac{\left|\Phi(\bar{k})^{\iota \circ F}\right|\cdot \left|\Phi(\bar{k})^{F}\right|}{\left|\Phi(\bar{k})^{F^2}\right|}.\]
Here for the first equality we are using the description of the cohomology of cyclic groups given in \cite[Section 8]{MR0219512}.

From \Cref{deficiency in extensions} we have $\epsilon(C/L)=0$, hence $ q(F^2)  =1$. Moreover, we have $\eta(C)=\text{ord}_2~ q(\iota)$, $\epsilon(C/K)=\text{ord}_2 ~q(F)$, and $\epsilon(C^L/K)=\text{ord}_2~q(\iota \circ F)$. To obtain the last equality we note that, since the formation of minimal regular models commutes with unramified base-change, we may identify the geometric special fibre of the minimal regular model of $C^L/K$  with that of $C/K$, save with $G_k$-action  twisted by the hyperelliptic involution.

On the other hand, it follows from \Cref{homomorphism theorem} that
\[D(F)D(\iota \circ F)D(F^2)=D(\iota)\]
as elements of  $\mathbb{Q}^{\times}/\mathbb{Q}^{\times 2}$. Taking 2-adic valuations of this equation, and interpreting the resulting terms via the discussion above, we obtain the congruence of  \Cref{unchangedness of norm}.
\end{proof}

 \subsection{Proof of \Cref{homomorphism theorem}.}
 
In what follows we take the notation introduced in the statement of \Cref{homomorphism theorem}. 

\begin{lemma} \label{pass_to_rep_theory}
For each $\sigma\in \mathfrak{G}$ we have 
\[D(\sigma)=\left|\det\big(\alpha | (\sigma-1)\mathbb{Q}^I\big)\right| .\]
(That is, as elements of $\mathbb{Q}^{\times}/\mathbb{Q}^{\times 2}$, $D(\sigma)$ is the absolute value of the determinant of $\alpha$ viewed as an endomorphism of $(\sigma-1)\mathbb{Q}^I$.)
\end{lemma}
 
\begin{proof} 
Fix $\sigma\in \mathfrak{G}$, define $d=\text{gcd}_{i\in I} \{d_i\}$ and $d'(\sigma)=\text{gcd}_{i\in I} \{|\textup{orb}_{\sigma}(\Gamma_i)|\cdot d_i\}.$ By \cite[Proof of Theorem 1.17]{MR1717533} we have
\[ \left|\Phi(\bar{k})^\sigma\right|\cdot q(\sigma)=\left|\frac{\ker(\beta)^\sigma}{\text{im}(\alpha)^\sigma}\right|\cdot \frac{d'(\sigma)}{d}.\]
We remark that, in order to apply the cited result, we are using the assumption that $\mathfrak{G}$ preserves the arithmetic genus of components. 

In what follows, to ease notation we write $\Lambda$ for the $\mathbb{Z}[\mathfrak{G}]$-module $\mathbb{Z}^I$. 
Now \[\ker(\beta)^\sigma/\text{im}(\alpha)^\sigma\cong \ker\Big(\beta:\Lambda^\sigma/\textup{im}(\alpha)^\sigma\longrightarrow d'(\sigma)\mathbb{Z}\Big).\]
We now apply the snake lemma to the commutative diagram with exact rows
\[
\xymatrix{0\ar[r] & \frac{\Lambda^{\sigma}}{\textup{im}(\alpha)^{\sigma}}\ar[d]^{\beta_{1}}\ar[r] & \frac{\Lambda}{\textup{im}(\alpha)}\ar[d]^{\beta_{2}} \ar[r] & \frac{\Lambda}{\textup{im}(\alpha)+\Lambda^{\sigma}}\ar[r]\ar[d]^{\beta_{3}} & 0\\
0\ar[r] & d'(\sigma)\mathbb{Z}\ar[r] & d\mathbb{Z}\ar[r] & d\mathbb{Z}/d'(\sigma)\mathbb{Z}\ar[r] & 0,
}
\]
where each vertical arrow is induced by $\beta$. Noting that all vertical arrows are surjective, this gives
\[D(\sigma)=\frac{dq(\sigma)^2}{d'(\sigma)}|\ker(\beta_3)|=q(\sigma)^2\frac{d^2}{d'(\sigma)^2}\left| \frac{\Lambda}{\textup{im}(\alpha)+\Lambda^{\sigma}} \right|.\]
Thus as a function $\mathfrak{G} \rightarrow \mathbb{Q}^{\times}/\mathbb{Q}^{\times 2}$ we have
\[D(\sigma)=\left| \frac{\Lambda}{\textup{im}(\alpha)+\Lambda^{\sigma}} \right|.\]
Now $\sigma-1$ induces an isomorphism
\[\frac{\Lambda}{\textup{im}(\alpha)+\Lambda^{\sigma}} \stackrel{\sim}{\longrightarrow}\frac{(\sigma-1)\Lambda}{(\sigma-1)\alpha \left(\Lambda\right)}=\frac{(\sigma-1)\Lambda}{\alpha\left((\sigma-1)\Lambda\right)},\]
where for the last equality we use that $\sigma$ commutes with $\alpha$. 

To conclude, note that $(\sigma-1)\Lambda$ is a free $\mathbb{Z}$-module of finite rank, and $\alpha$ is a linear endomorphism of this group. By properties of Smith normal form, the order of the group  
\[\frac{(\sigma-1)\Lambda}{\alpha\left((\sigma-1)\Lambda\right)}\]
 is equal to the absolute value of the determinant of $\alpha$ as an endomorphism of the $\mathbb{Q}$-vector space $(\sigma-1)\mathbb{Q}^I$. This gives the result.  
\end{proof}

The passage from $\mathbb{Z}[\mathfrak{G}]$-modules to $\mathbb{Q}[\mathfrak{G}]$-modules provided by \Cref{pass_to_rep_theory} allows us to make use of representation theory in characteristic zero. Note that the matrix representing $\alpha$ on $\mathbb{Q}^I$  with respect to the natural permutation basis  is symmetric (it's just the intersection matrix associated to $\mathcal{X}_{\bar{k}}$). We thus see that the minimal polynomial of $\alpha$ as an endomorphism of $\mathbb{Q}^I$ splits over $\mathbb{R}$. Moreover, the kernel of $\alpha$ is  $(\sum_{i\in I}d_i \Gamma_i)\cdot \mathbb{Q}$, which is fixed by $\mathfrak{G}$.   These observations motivate (and allow us to apply) the following lemmas.

\begin{lemma} \label{rep lemma 1}
Let $G$ be a finite cyclic group with generator $\sigma$, and  let $V$ be a $\mathbb{Q}[G]$-representation. Let $\alpha\in \textup{End}_{\mathbb{Q}[G]}V$ be a $G$-endomorphism of $V$ whose minimal polynomial splits over $\mathbb{R}$ and is such that $\ker(\alpha) \subseteq V^G$. Then
\[\det\left(\alpha | (\sigma-1)V \right)\equiv\det\left(\alpha | V_{-1,\sigma}\right)~(\textup{mod }\mathbb{Q}^{\times 2}),\]
where $V_{-1,\sigma}$ is the $(-1)$-eigenspace for $\sigma$ on $V$.   
\end{lemma}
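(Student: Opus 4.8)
The plan is to diagonalise the action of $\sigma$ by passing to the isotypic decomposition of $V$ as a $\mathbb{Q}[G]$-module, and then to show that only the $(-1)$-eigenspace of $\sigma$ contributes non-trivially modulo rational squares. Write $n=|G|$; since $\sigma^n=1$, the algebra $\mathbb{Q}[G]\cong\mathbb{Q}[x]/(x^n-1)$ splits as $\prod_{d\mid n}K_d$ with $K_d=\mathbb{Q}(\zeta_d)$ the $d$-th cyclotomic field, and correspondingly $V=\bigoplus_{d\mid n}V_d$, where $V_d$ is the summand on which $\sigma$ acts with minimal polynomial the $d$-th cyclotomic polynomial; each $V_d$ is naturally a $K_d$-vector space. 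Because $\alpha$ commutes with $\sigma$ it preserves every $V_d$ and acts $K_d$-linearly there, so both $(\sigma-1)V$ and $\alpha$ respect this decomposition and the determinants in the statement factor as products over $d$. I would note at the outset that the asserted identity is to be read in $\mathbb{Q}^\times/\mathbb{Q}^{\times2}$, consistent with its use in the proof of \Cref{homomorphism theorem}.

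Next I would analyse $(\sigma-1)$ on each piece. On $V_1=V^G$ one has $(\sigma-1)V_1=0$, so this piece contributes nothing. On $V_d$ with $d\geq 2$ the operator $\sigma-1$ acts invertibly (its eigenvalues are $\zeta-1\neq 0$ for the various primitive $d$-th roots of unity $\zeta$), so $(\sigma-1)V_d=V_d$; thus $(\sigma-1)V=\bigoplus_{d\geq 2}V_d$. Here the hypothesis $\ker(\alpha)\subseteq V^G=V_1$ is exactly what guarantees $\alpha$ is injective, hence invertible, on each $V_d$ with $d\geq 2$, so that all the determinants in sight are non-zero and genuinely land in $\mathbb{Q}^\times$. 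Since $V_2$ is precisely the $(-1)$-eigenspace $V_{-1,\sigma}$ (where $\sigma=-1$), combining these observations gives
\[\det\!\left(\alpha\mid(\sigma-1)V\right)=\det\!\left(\alpha\mid V_{-1,\sigma}\right)\cdot\prod_{d\geq 3}\det\!\left(\alpha\mid V_d\right),\]
so it remains to prove that $\det(\alpha\mid V_d)$ is a square in $\mathbb{Q}^\times$ for every $d\geq 3$.

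This last step is the crux, and it is where the hypothesis that the minimal polynomial of $\alpha$ splits over $\mathbb{R}$ enters. For $d\geq 3$ the field $K_d$ is a CM field with maximal totally real subfield $K_d^+$, and $\det_\mathbb{Q}(\alpha\mid V_d)=N_{K_d/\mathbb{Q}}(\delta)$ where $\delta=\det_{K_d}(\alpha\mid V_d)\in K_d$ is the $K_d$-linear determinant. The plan is to show $\delta\in K_d^+$: under each complex embedding $\iota_\zeta\colon K_d\hookrightarrow\mathbb{C}$ the image $\iota_\zeta(\delta)$ is the determinant of $\alpha$ on the corresponding $\zeta$-eigenspace $V_d\otimes_{K_d,\iota_\zeta}\mathbb{C}$, which is a product of eigenvalues of $\alpha$; since these eigenvalues are real by hypothesis, every $\iota_\zeta(\delta)$ is real. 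As $K_d/\mathbb{Q}$ is abelian and complex conjugation is a single well-defined Galois element fixing exactly $K_d^+$, having all archimedean images real forces $\delta\in K_d^+$. Then $N_{K_d/\mathbb{Q}}(\delta)=N_{K_d^+/\mathbb{Q}}\!\left(N_{K_d/K_d^+}(\delta)\right)=N_{K_d^+/\mathbb{Q}}(\delta^2)=\big(N_{K_d^+/\mathbb{Q}}(\delta)\big)^2$ is a rational square, as required. I expect the verification that ``all embeddings real'' implies $\delta\in K_d^+$ to be the only genuinely delicate point; the eigenspace and norm bookkeeping is routine, and everything else is a direct consequence of the $\mathbb{Q}[G]$-module structure.
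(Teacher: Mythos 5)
Your proof is correct and follows essentially the same route as the paper's: decompose $V$ into its isotypic pieces (your cyclotomic decomposition $\bigoplus_{d\mid n}V_d$ is exactly the isotypic decomposition for cyclic $G$, where the endomorphism algebra of each irreducible is just $K_d=\mathbb{Q}(\zeta_d)$), identify $(\sigma-1)V$ with the non-trivial part using $\ker(\alpha)\subseteq V^G$, and for the non-real pieces show that $\det_{K_d}(\alpha)$ lies in the totally real index-$2$ subfield $K_d^+$ so that its norm to $\mathbb{Q}$ is a square. You are also right that the stated equality is to be read in $\mathbb{Q}^\times/\mathbb{Q}^{\times 2}$, which is how the paper uses it.
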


\begin{proof}
Let $V=\bigoplus_{i=1}^{n}V_i^{d_i}$ be an isotypic decomposition of $V$, so that each $V_i$ is an irreducible $\mathbb{Q}[G]$-representation and $V_i\ncong V_j$ for $i\neq j$. Suppose, without loss of generality, that $V_1$ is the trivial representation. Now $\alpha$ preserves this decomposition and $(\sigma-1)V=\bigoplus_{i=2}^{n}V_i^{d_i}$. Since $\ker(\alpha) \subseteq V^G$, the restriction of $\alpha$ to each $V_i^{d_i}$ is non-singular. Thus we reduce to the case where $\alpha$ is non-singular and $V=W^d$ for an irreducible non-trivial $\mathbb{Q}[G]$-representation $W$. Let $\chi$ be the character of a complex irreducible constituent of $W$. We can suppose that $\chi$ is non-real (so that $\chi(\sigma)\notin \{\pm 1\}$), in which case we wish to show that  $\textup{det}\left(\alpha \right) \in \mathbb{Q}^{\times 2}$. Now $\text{End}_{\Q[G]}V\cong \text{Mat}_d\left(\text{End}_{\Q[G]}W\right)$ is a finite dimensional simple algebra over $\Q$. Set $D=\text{End}_{\Q[G]}W$ so that $D$ is a division algebra, and let $K/\Q$ be the centre of $D$. We  have $K\cong \Q(\chi)$ as $\mathbb{Q}$-algebras, where $\Q(\chi)$ is the character field of $\chi$. Se, for example, \cite{MR0122892} for proofs of the representation theoretic facts used above. Note that $K/\Q$ is abelian.

Now via the diagonal embedding of $K$ into $\text{End}_{\Q[G]}V$, $V$ becomes a $K[G]$-module. Since $K$ is the centre of $\text{End}_{\Q[G]}V$, each $\Q[G]$-endomorphism of $V$ is in fact $K$-linear, so  we may view $\alpha$ as a $K[G]$-endomorphism of $V$. Denoting $\text{det}_K(\alpha)$ the determinant of $\alpha$ viewed as a $K$-endomorphism, we have 
\[\text{det}(\alpha)=N_{K/\Q} \left(\text{det}_{K}\left(\alpha \right)\right)\] 
 (see, for example, \cite[Theorem A.1]{MR0222054}). 

As $\chi$ is assumed non-real, the field $K$ in not totally real hence there is an index $2$ totally real subfield $K^{+}$ of $K$ (recall that $K/\mathbb{Q}$ is abelian). We claim that  $\det(\alpha)$ is in $K^{+}$. Indeed, since the minimal polynomial of $\alpha$ as a $\mathbb{Q}$-endomorphism of $V$ splits over $\mathbb{R}$, each root of the minimal polynomial of $\alpha$ as a $K$-endomorphism of $V$ is totally real. It follows that  $\det(\alpha)$ is a product of totally real numbers, hence in $K^{+}$. Thus 
\[\det(\alpha)=N_{K/\Q} \left(\text{det}_{K}(\alpha)\right)= N_{K^+/\Q}(\text{det}_{K}(\alpha))^2\in \Q^{\times 2}\]
as desired.
\end{proof}

\begin{lemma} \label{rep_theory_lemma_2}
Let $G$ be a finite group and $V$ a $\mathbb{Q}[G]$-representation. Let $\alpha\in \textup{End}_{\mathbb{Q}[G]}V$ be a $G$-endomorphism of $V$ whose minimal polynomial splits over $\mathbb{R}$ and is such that $\ker(\alpha) \subseteq V^G$. 
Then the function
$\phi:G\rightarrow \mathbb{Q}^{\times}/\mathbb{Q}^{\times 2}$
defined by
\[\phi(\sigma)=\textup{det}\left(\alpha | V_{-1,\sigma}\right)\]
is a homomorphism.  
\end{lemma}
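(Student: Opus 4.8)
The plan is to prove $\phi(\sigma\tau)\equiv\phi(\sigma)\phi(\tau)$ in $\mathbb{Q}^{\times}/\mathbb{Q}^{\times2}$ by decomposing $V$ according to the eigenvalues of $\alpha$ and reducing the homomorphism property to the multiplicativity of an ordinary determinant of $\sigma$. First I would record that $\phi$ is well defined with values in $\mathbb{Q}^{\times}$: since $\alpha$ commutes with $G$, each space $V_{-1,\sigma}$ is $\alpha$-stable, and since $\ker(\alpha)\subseteq V^{G}\subseteq V^{\sigma}$ while $V^{\sigma}\cap V_{-1,\sigma}=0$, the restriction $\alpha\mid_{V_{-1,\sigma}}$ is injective, hence an isomorphism. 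Thus $\det(\alpha\mid V_{-1,\sigma})$ is a nonzero rational number.

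Next I would exploit the hypothesis that the minimal polynomial of $\alpha$ splits over $\mathbb{R}$: every eigenvalue of $\alpha$ is real, and the primary decomposition of $V$ under $\alpha$ groups these eigenvalues into Galois orbits $\mathcal{O}$ of real algebraic numbers, yielding a $G$-stable decomposition $V=\bigoplus_{\mathcal{O}}W_{\mathcal{O}}$ over $\mathbb{Q}$ (the summands are $G$-stable because $G$ commutes with $\alpha$). Fixing $\lambda\in\mathcal{O}$ and writing $V_{\lambda}$ for the generalized $\lambda$-eigenspace — a representation of $G$ defined over the real field $\mathbb{Q}(\lambda)$ — I set $d_{\mathcal{O}}(\sigma)=\dim (V_{\lambda})_{-1,\sigma}$, which is independent of $\lambda\in\mathcal{O}$ by Galois equivariance. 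Because $\alpha$ is invertible on $V_{-1,\sigma}$ the orbit $\{0\}$ contributes nothing, and after base change to $\mathbb{C}$ one computes the honest equality of rationals
\[\det(\alpha\mid V_{-1,\sigma})=\prod_{\mathcal{O}\neq\{0\}}c_{\mathcal{O}}^{\,d_{\mathcal{O}}(\sigma)},\qquad c_{\mathcal{O}}:=\prod_{\lambda\in\mathcal{O}}\lambda\in\mathbb{Q}^{\times},\]
so that modulo squares only the parities $d_{\mathcal{O}}(\sigma)\bmod 2$ matter.

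The heart of the argument is then to show that for each orbit the map $\sigma\mapsto d_{\mathcal{O}}(\sigma)\bmod 2$ is a homomorphism $G\to\mathbb{Z}/2\mathbb{Z}$. Here I would use reality once more: since $\lambda$ is real, $V_{\lambda}$ is a genuine real representation of $G$, so the eigenvalues of any $\sigma$ on $V_{\lambda}$ are roots of unity occurring in complex-conjugate pairs together with real eigenvalues $\pm 1$; consequently $\det(\sigma\mid V_{\lambda})=(-1)^{d_{\mathcal{O}}(\sigma)}$. As $\sigma\mapsto\det(\sigma\mid V_{\lambda})$ is visibly multiplicative, this gives $d_{\mathcal{O}}(\sigma\tau)\equiv d_{\mathcal{O}}(\sigma)+d_{\mathcal{O}}(\tau)\pmod 2$ for every $\mathcal{O}$, and substituting into the displayed product yields $\phi(\sigma\tau)\equiv\phi(\sigma)\phi(\tau)$ modulo $\mathbb{Q}^{\times2}$.

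The main obstacle, and the point where the hypotheses must be deployed in exactly the right way, is this passage to the parity statement. For a general complex representation $\det(\sigma\mid V_{\lambda})$ is merely a root of unity, not a sign, and the argument breaks down; it is precisely the reality of the eigenvalues $\lambda$ (forced by the minimal polynomial splitting over $\mathbb{R}$) that makes each $V_{\lambda}$ a real representation and turns the determinant into the sign $(-1)^{d_{\mathcal{O}}(\sigma)}$. The condition $\ker(\alpha)\subseteq V^{G}$ plays the complementary role of isolating the singular part of $\alpha$ so that the $\lambda=0$ component never contributes. (One could alternatively first apply \Cref{rep lemma 1} to the cyclic group $\langle\sigma\rangle$ to replace $V_{-1,\sigma}$ by $(\sigma-1)V$, but the determinant-of-$\sigma$ computation above seems the most economical route to multiplicativity.)
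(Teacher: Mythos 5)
Your proof is correct, but it takes a genuinely different route from the paper's. The paper decomposes $V$ into its isotypic components as a $\mathbb{Q}[G]$-module, passes to the centre $K\cong\mathbb{Q}(\chi)$ of the endomorphism algebra of each component so as to write $\det_{\mathbb{Q}}=N_{K/\mathbb{Q}}\circ\det_K$, and then argues case by case: if $K$ is not totally real the contribution is a norm from an index-two totally real subfield, hence a square; if $\chi$ is symplectic, $\dim U_{-1,\sigma}$ is even because $U_{-1,\sigma}$ inherits a nondegenerate alternating form; and only in the orthogonal case does it invoke the identity $\det(\sigma\mid U)=(-1)^{\dim U_{-1,\sigma}}$ for a real representation $U$. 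You instead take the primary decomposition of $V$ with respect to $\alpha$, use the hypothesis on the minimal polynomial to see that each generalised eigenspace $V_\lambda$ is a $G$-representation over a real number field, and apply that same determinant identity uniformly to every orbit, having first reduced $\det(\alpha\mid V_{-1,\sigma})$ modulo squares to the parities $d_{\mathcal O}(\sigma)$ by an eigenvalue computation. This is shorter, avoids Schur indices, reduced norms and the orthogonal/symplectic dichotomy entirely, and makes transparent where each hypothesis enters ($\ker\alpha\subseteq V^G$ disposes of the $\lambda=0$ component, reality of the eigenvalues makes each $V_\lambda$ real); what the paper's route buys in exchange is the finer by-product that non-orthogonal isotypic components contribute trivially, which is not needed for the statement. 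In a final write-up you should spell out the Galois-equivariance argument showing that $\dim(V_\lambda)_{-1,\sigma}$ is constant on each orbit $\mathcal{O}$, so that the contribution of $\mathcal{O}$ really is $c_{\mathcal O}^{\,d_{\mathcal O}(\sigma)}$ with $c_{\mathcal O}\in\mathbb{Q}^{\times}$, but this is routine.
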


\begin{proof}
Similarly to the proof of \Cref{rep lemma 1}, by considering an isotypic decomposition of $V$ we reduce to the case where $\alpha$ is non-singular and  $V=W^d$ for some $d\geq 1$ and  irreducible $\mathbb{Q}[G]$-representation  $W$. As in that proof, let $D$ be the division algebra $D=\text{End}_{\Q[G]}W$ so that $\text{End}_{\Q[G]}V\cong \text{Mat}_d(D)$, let $K/\Q$ be the centre of $D$, and let $\chi$ be the character of a complex irreducible constituent of $W$. Once again we have $K\cong \Q(\chi)$, $K/\Q$ is abelian, and we may view $\alpha$ as a $K[G]$-endomorphism of $V$.  Moreover, we similarly have
\[\text{det}\left( \alpha | V_{-1,\sigma} \right)=N_{K/\Q} \left(\text{det}_{K}\left(\alpha | V_{-1,\sigma}\right)\right) \]  
for all $\sigma \in G$. Denoting by $\phi_K$   the function $G\rightarrow K^{\times}/K^{\times2}$ given by 
\[\phi_K(\sigma)= \text{det}_K\left(\alpha | V_{-1,\sigma}\right),\]
we thus have $\phi =N_{K/\mathbb{Q}}\circ\phi_K$.

First suppose that $K$ in not totally real. Then there is an index $2$ totally real subfield $K^{+}$ of $K$. Since the minimal polynomial of $\alpha$ as a $\mathbb{Q}$-endomorphism of $V$ splits over $\mathbb{R}$,  $\text{det}\left(\alpha | V_{-1,\sigma}\right)$ lies  in $K^{+}$. Thus
\[\phi (\sigma)=N_{K/\mathbb{Q}}\circ\phi_K(\sigma)=\left(N_{K^{+}/\mathbb{Q}}  \circ \phi_K(\sigma)\right)^2 \in \mathbb{Q}^{\times 2}, \]
hence $\phi $ is trivial in this case.

Now assume that $K$ is totally real, or equivalently that $\chi$ is real valued. Let $m$ be the Schur index of $\chi$ (over $\mathbb{Q}$ or equivalently $K$). Suppose first that $\chi$ is realisable over $\mathbb{R}$. Then, via a chosen embedding $K\hookrightarrow \mathbb{R}$, we have $V\otimes_K \mathbb{R} \cong U^{md}$ for some irreducible real representation $U$. Fix $\sigma \in G$. Then $V_{-1,\sigma} \otimes_{K} \mathbb{R} = (V \otimes_{K} \mathbb{R})_{-1,\sigma}=(U_{-1,\sigma})^{md}$. View $\alpha$ as a matrix $M\in \text{Mat}_{md}(\mathbb{R})$ via the identification $\text{End}_{\mathbb{R}[G]}\left(U^{md}\right)\cong \text{Mat}_{md}(\text{End}_{\mathbb{R}[G]}U)=\text{Mat}_{md}(\mathbb{R})$. The determinant of $\alpha$, viewed as a $K$-endomorphism of $(U_{-1,\sigma})^{md}$, is then equal to $\text{det}(M)^{\text{dim}U_{-1,\sigma}}$. In fact, one sees that $\text{det}(M)$ is equal to $\text{Nrd}(\alpha)\in K^{\times}$ where here $\text{Nrd}$ denotes the reduced norm on the central simple algebra $A=\text{End}_{\Q[G]}V$ over $K$.

 We claim that the congruence
\begin{equation} \label{matrix_congruence}
\text{dim}U_{-1,\sigma}+\text{dim}U_{-1,\tau}\equiv\text{dim}U_{-1,\sigma \tau}~~\text{(mod 2)}
\end{equation}
holds for all $\sigma$ and $\tau$ in $G$. Combined with the above discussion this shows that  $\phi_K$, hence $\phi$, is a homomorphism in this case.    To prove the claim we note that  $U$ is a real vector space and each $\sigma \in G$ acts on $U$ as a finite order matrix $N_\sigma$ which is hence diagonalisable over $\mathbb{C}$. Base-changing to $\mathbb{C}$, diagonalising $N_\sigma$, and noting that the eigenvalues of $N_\sigma$ are roots of unity appearing in complex-conjugate pairs, one sees that for each $\sigma \in G$ we have
\[(-1)^{\text{dim}U_{-1,\sigma}}=\text{det}(N_\sigma).\]
The congruence \eqref{matrix_congruence} now follows from multiplicativity of the determinant.

Finally, suppose that $\chi$ is not realisable over $\mathbb{R}$. Then we have $V\otimes_K \mathbb{\mathbb{C}} \cong U^{md}$ where $U$ is an irreducible representation over $\mathbb{C}$ such that $U$,   hence $U^{md}$, possesses a non-degenerate $G$-invariant alternating form. Denote by $\left\langle ~ ,~ \right\rangle$ such a form on $U^{md}$. The argument for the previous case again gives $\text{det}_K\left(\alpha | V_{-1,\sigma}\right)= \text{Nrd}(\alpha)^{\text{dim}U_{-1,\sigma}}$. We claim that  $\text{dim}U_{-1,\sigma}$ is even for each $\sigma \in G$, from which it follows that $\phi_K$,  hence $\phi$, is trivial. Indeed, the pairing $\left \langle ~,~\right \rangle$ gives a $G$-equivariant isomorphism from $U$ to its dual $U^*$. This isomorphism respects the $\sigma$-eigenspace decomposition on each side and hence restricts to an isomorphism $U_{-1,\sigma} \stackrel{\sim}{\longrightarrow} U_{-1,\sigma}^*$ whose associated bilinear pairing is non-degenerate and alternating. Thus $\dim U_{-1,\sigma}$ is even.
\end{proof}

\begin{proof}[Proof of \Cref{homomorphism theorem}]
In the notation of \Cref{sec:component_group_abstract_description}, let $V$ denote the $\mathbb{Q}[\mathfrak{G}]$-representation $\mathbb{Q}^I$. For $\sigma \in \mathfrak{G}$, combining \Cref{pass_to_rep_theory} with \Cref{rep lemma 1} we see that we have 
\[D(\sigma)=\det\left(\alpha | V_{-1,\sigma}\right)~\in \mathbb{Q}^{\times}/\mathbb{Q}^{\times 2}.\]
(That the assumptions of \Cref{rep lemma 1} are satisfied follows from the discussion preceding the statement of that lemma.)
The result now follows from \Cref{rep_theory_lemma_2}.
\end{proof}

\subsection{Computing Tamagawa numbers modulo squares} 

The proof of \Cref{homomorphism theorem} facilitates the computation of Tamagawa numbers of hyperelliptic curves, at least up to squares, and to end the section we record this in the following proposition. To make the statement more self-contained we summarise now the notation used.

\begin{notation}
We take the following notation:
\begin{itemize}
\item $K$ a non-archimedean local field,
\item $X/K$ a smooth, proper, geometrically connected curve of genus $g$,
\item $\Phi$ the N\'{e}ron component group (scheme) of the Jacobian of $X$,
\item $\mathcal{X}/\mathcal{O}_K$ the minimal proper regular model of $X$,
\item $\mathcal{X}_{\bar{k}}$ the special fibre of $\mathcal{X}$ base-changed to $\bar{k}$,
\item $ \Gamma_1,...,\Gamma_n $ the  irreducible components of $\mathcal{X}_{\bar{k}}$, and  $r_i$ the size of the $G_k$-orbit of $\Gamma_i$,
 
\item $\epsilon(X/K)\in \{0,1\}$, defined to be equal to $1$ if $X$ is deficient over $K$, and $0$ otherwise,
\item $F$ the Frobenius element in $G_k$.
 \end{itemize}
\end{notation}

\begin{proposition} \label{tam computations}
Take the notation above. Moreover, let $S_1,...,S_t$ be the even sized orbits of $G_k$ on the set $\{\Gamma_1,...,\Gamma_n\}$ of irreducible components. For each $1\leq i \leq t$  let $m_i=|S_i|$, let  $\Gamma_{i,1}$ be a representative of the orbit $S_i$, and define
\[\epsilon_i=\sum_{i=0}^{m_i-1}(-1)^i F^i (\Gamma_{i,1}).\]
 Then 
\[2^{\epsilon(X/K)}\cdot \frac{|\Phi(\bar{k})|}{|\Phi(k)|}  \equiv  \left| \textup{det}\left(\frac{1}{m_j} \left\langle \epsilon_{i} ,
 \epsilon_{j}\right\rangle \right)_{1\leq i,j \leq t}\right|~~(\textup{mod }~\mathbb{Q}^{\times 2}),\]
where $\left\langle  \cdot, \cdot \right \rangle$ denotes the intersection pairing on $\mathcal{X}_{\bar{k}}$.
\end{proposition}

\begin{proof}
 \Cref{pass_to_rep_theory} combined with \Cref{rep lemma 1} gives
\begin{equation} \label{alpha_formula_in_practice}
2^{\epsilon(X/K)}\cdot \frac{|\Phi(\bar{k})|}{|\Phi(k)|} \equiv  \left| \textup{det}\left(\alpha | V_{-1}\right)\right| ~~(\text{mod }~\mathbb{Q}^{\times 2}),
\end{equation}
where $V_{-1}$ denotes the $(-1)$-eigenspace of  $F$ on the permutation module $V=\Q^I$, and $\alpha$ is the linear map defined from the intersection pairing, as detailed in \Cref{sec:component_group_abstract_description}. Now $\{\epsilon_1,...,\epsilon_t\}$ forms a basis for  $V_{-1}$ and, using $G_k$-invariance of the intersection pairing, for each $1\leq i \leq t$ we have
\[\alpha(\epsilon_i)=\sum_{j=1}^{t}\left \langle\epsilon_i , \Gamma_{j,1}\right \rangle \epsilon_j=\sum_{j=1}^{t}\frac{1}{r_j}\left \langle \epsilon_i ,\epsilon_j\right \rangle \epsilon_j.\]
Combined with \Cref{alpha_formula_in_practice} this gives the result.
\end{proof}

\section{Ramified extensions in odd residue characteristic: generalities} \label{min reg model ramified}

Let $K$ be a nonarchimedean local field of odd residue characteristic. In this section we  consider \Cref{Kramer Tunnell} when the quadratic extension $L/K$ is ramified. Specifically, across Sections \ref{min reg model ramified} to \ref{completion_ram_quad_odd_res} we will prove the following:

\begin{proposition}[=\Cref{later_ramified_cases}] \label{prop:ram_cases_of_conjecture}
Let $C/K$ be a hyperelliptic curve and let $L/K$ be a ramified quadratic extension. If $C/K$ has semistable reduction, then \Cref{Kramer Tunnell} holds for $C$ and the extension $L/K$.
\end{proposition}

Thus for this section we fix a ramified quadratic extension $L/K$, and fix a hyperelliptic curve $C/K$ with semistable reduction. As usual we denote by $J$ the Jacobian of $C$.  Recall from \Cref{norm map as Tamagawa numbers} that, since $K$ has odd residue characteristic, we can express the cokernel of the local norm map in terms of Tamagawa numbers:
\begin{equation} \label{equation_norm_as_tamag_product}
\dim J(K)/\N J(L) = \text{ord}_2 \frac{c(J/K)c(J^L/K)}{c(J/L)}.
\end{equation}
We begin by describing a method for computing the ratio $\frac{c(J/L)}{c(J/K)}$ up to rational squares for general semistable curves. Separately, we will compute $c(J^L/K)$ up to squares by analysing the minimal regular model of the quadratic twist of $C$ by $L$. Since $C^L/K$ is not semistable, we use results from \Cref{proof of compatibility} to facilitate this computation. As we shall see, the terms in \Cref{Kramer Tunnell} involving deficiency and root numbers will naturally appear along the way. For a more   precise description of the strategy for proving \Cref{prop:ram_cases_of_conjecture}, see \Cref{ram:strategy} below.

\subsection{The minimal proper regular model of a semistable curve} \label{min reg model}
For proofs and more details of what follows we refer to \cite{SGA7IX}. For the specific formulation detailed below we refer to \cite[Section 2]{DDMM18} and the references therein.

Denote by $\mathcal{C}/\mathcal{O}_K$ the minimal proper regular model of $C$, and denote by
$\mathcal{C}_{\bar{k}}$ the special fibre of $\mathcal{C}$,  base-changed to $\bar{k}$. Since $C/K$ is assumed semistable, $\mathcal{C}_{\bar{k}}$ is a semistable curve over $\bar{k}$.  Let $\Upsilon_C$ denote the dual graph $\mathcal{C}_{\bar{k}}$; by definition this is the finite connected graph with a vertex for each irreducible component of $\mathcal{C}_{\bar{k}}$, and  such that vertices corresponding to components $\Gamma_1$ and $\Gamma_2$ are joined by one edge for each ordinary double point of $\mathcal{C}_{\bar{k}}$ lying on both $\Gamma_1$ and $\Gamma_2$ (in particular $\Upsilon_C$ may have loops and multiple edges). We view $\Upsilon_C$ as a metric space where we give each edge length $1$. Denote by $H_1(\Upsilon_C,\mathbb{Z})$ the first singular homology group of $\Upsilon_C$. Since $\mathcal{C}_{\bar{k}}$ is the base-change from $k$ to $\bar{k}$ of the special fibre of $\mathcal{C}$,  $H_1(\Upsilon_C,\mathbb{Z})$ carries a natural $G_k$-action.\footnote{Due to the possible presence of loops and multiple edges in $\Upsilon_C$, the $G_k$-action on $H_1(\Upsilon_C,\mathbb{Z})$ need not be fully determined by the $G_k$-action on the irreducible components of $\mathcal{C}_{\bar{k}}$. When there is ambiguity  one needs some additional information concerning the ordinary double points to pin down the action; see e.g. \cite[Section 2.1]{DDMM18} for more details.} Moreover, $H_1(\Upsilon_C,\mathbb{Z})$ carries a natural non-degenerate,  symmetric,  $G_k$-invariant bilinear pairing 
\[P:H_1(\Upsilon_C,\mathbb{Z})\times  H_1(\Upsilon_C,\mathbb{Z})\rightarrow \mathbb{Z}\] (informally,  $P(\gamma,\gamma')$ is the signed length of $\gamma\cap \gamma'$). The pairing $P$ induces an injection 
\begin{equation} \label{eq:pairing_injection_homology}
H_1(\Upsilon_C,\mathbb{Z})\hookrightarrow H_1(\Upsilon_C,\mathbb{Z})^\vee:=\textup{Hom}\left(H_1(\Upsilon_C,\mathbb{Z}),\mathbb{Z}\right),
\end{equation}
sending $\gamma$ to $P(-,\gamma)$.
The component group $\Phi(\bar{k})$ of $J/K$ is then $G_k$-equivariantly isomorphic to the  cokernel of this map:
\begin{equation} \label{component_gorup_homology}
\Phi(\bar{k})=H_1(\Upsilon_C,\mathbb{Z})^\vee/H_1(\Upsilon_C,\mathbb{Z}).
\end{equation}
In particular, we have 
\begin{equation}\label{eq:tam_1}
c(J/K)=\left|\left(\frac{H_1(\Upsilon_C,\mathbb{Z})^\vee}{H_1(\Upsilon_C,\mathbb{Z})}\right)^{G_k}\right|.\end{equation}
Moreover, the root number $w(J/K)$ of $J$ is encoded in the $G_k$-invariants of $H_1(\Upsilon_C,\mathbb{Z})$:
\begin{equation*}
w(J/K)=(-1)^{\textup{rk}H_1(\Upsilon_C,\mathbb{Z})^{G_k}}.
\end{equation*}
If we replace $K$ by $L$ and repeat the above constructions for the base-change $C_L$ of $C$ to $L$, then the dual graph $\Upsilon_{C_L}$ is obtained from $\Upsilon_C$ by replacing each edge by a path consisting of $2$ edges.\footnote{To see this one can argue as follows. First, the base change of $\mathcal{C}$ to the ring of integers $\mathcal{O}_{K^{\textup{nr}}}$ coincides with the minimal regular model $\mathcal{C}'/\mathcal{O}_{K^{\textup{nr}}}$ of $C$ over $\mathcal{O}_{K^{\textup{nr}}}$, hence $\mathcal{C}_{\bar{k}}$ coincides with the special fibre of $\mathcal{C}'$. Since $\mathcal{C}'$ is both semistable and regular, each singular point $x$ of $\mathcal{C}_{\bar{k}}$ is a split ordinary double point of thickness $1$ (in the sense of \cite[Definition 10.3.23]{MR1917232}). After base-changing $\mathcal{C}'$ to $\mathcal{O}_{L^{\textup{nr}}}$, the point $x$ becomes an ordinary double point of  thickness $2$ in  $\mathcal{C}'\times_{\mathcal{O}_{K^{\textup{nr}}}}\mathcal{O}_{L^{\textup{nr}}}$, as follows from the description of the completed local ring at $x$ given in \cite[Corollary 3.22]{MR1917232} (the factor $2$ arising as the ramification index of $L/K$). The minimal regular model of $C_L$ over $\mathcal{O}_{L^{\textup{nr}}}$ is then obtained by blowing up $\mathcal{C}'\times_{\mathcal{O}_{K^{\textup{nr}}}}\mathcal{O}_{L^{\textup{nr}}}$ once at each such $x$, which has the claimed effect on the dual graph (cf. \cite[Lemma 10.3.21, Corollary 10.3.25]{MR1917232}).
} In particular, the homology of the new dual graph with its $G_k$-action is unchanged, but the pairing gets multiplied by $2$. Thus we have 
\begin{equation}\label{eq:tam_2}
c(J/L)=\left|\left(\frac{H_1(\Upsilon_C,\mathbb{Z})^\vee}{2H_1(\Upsilon_C,\mathbb{Z})}\right)^{G_k}\right| \quad\textup{ and }\quad w(J/L)=(-1)^{\textup{rk}H_1(\Upsilon_C,\mathbb{Z})^{G_k}}. \end{equation}

\subsection{The group $\mathfrak{B}_{C/K}$.}
Following work of Betts--Dokchitser \cite{MR3933907}, the quantities appearing in \eqref{eq:tam_1} and  \eqref{eq:tam_2} can be neatly packaged together in the following way. Temporarily writing $\Lambda=H^1(\Upsilon_C,\mathbb{Z})$, define the finite abelian group $\mathfrak{B}_{C/K}$ by
\begin{equation} \label{eq:betts_group_def_C}
\mathfrak{B}_{C/K}=\textup{im}\left(H^1\big(G_k,\Lambda\big)\longrightarrow H^1\big(G_k,\Lambda^\vee\big)\right),
\end{equation}
where the map is induced by \eqref{eq:pairing_injection_homology}. Combining \eqref{eq:tam_1} and  \eqref{eq:tam_2} with \cite[Theorem 1.4.2]{MR3933907} then gives
\begin{equation} \label{betts_dokchitser_input}
w(J/L)\cdot (-1)^{\textup{ord}_2\frac{c(J/L)}{c(J/K)}}=(-1)^{\dim\mathfrak{B}_{C/K}[2]}.
\end{equation}
 
 \begin{remark}
 If $\mathcal{T}$ denotes the toric part of the Raynaud parametrisation of $J/K$, and $X(\mathcal{T})$ denotes its character group, then $X(\mathcal{T})$ carries a natural $G_k$-action and a non-degenerate symmetric pairing $X(\mathcal{T})\otimes X(\mathcal{T})\rightarrow  \mathbb{Z}$ (see \cite{MR0427326,SGA7IX}). As explained in \cite[Section 2]{DDMM18}, it follows from work of Raynaud  that $X(\mathcal{T})\cong H_1(G_k,\mathbb{Z})$ as $G_k$-modules  with a pairing. One can then alternatively obtain \eqref{betts_dokchitser_input} directly from \cite[Theorem 1.1.1 (i)]{MR3933907}. 
 \end{remark}
 
 \subsection{Strategy for the proof of \Cref{prop:ram_cases_of_conjecture}} \label{ram:strategy}
 
 In light of \eqref{equation_norm_as_tamag_product} and \eqref{betts_dokchitser_input}, \Cref{Kramer Tunnell} for $C$ and $L/K$ is the equivalent to the assertion
\begin{equation} \label{ramified_goal}
(-1)^{\dim \mathfrak{B}_{C/K}[2]}\stackrel{?}{=}(-1)^{\textup{ord}_2c(J^L/K)}(\Delta_C,L/K)(-1)^{\epsilon(C/K)+\epsilon(C^L/K)} .
\end{equation}
In   \Cref{betts_group_parity} below we give a general result, \Cref{prop:betts_group_mod_2}, which facilitates  the computation of the parity of the dimension of the $2$-torsion of the group  
  \[\mathfrak{B}_\Lambda:=\textup{im}\big(H^1\big(G_k,\Lambda\big)\longrightarrow H^1\big(G_k,\Lambda^\vee\big)\big)\] 
associated to an arbitrary $G_k$-lattice $\Lambda$ equipped with a non-degenerate symmetric bilinear pairing. 

In \Cref{sec:clusters}  we summarize results from \cite{DDMM18} which give  an explicit  description of the lattice $H_1(\Upsilon_C,\mathbb{Z})$ attached to a semistable hyperelliptic curve $C/K:y^2=f(x)$ in terms of combinatorial data associated to the $p$-adic distances between the roots of $f(x)$. Combined with the results of \Cref{betts_group_parity} mentioned above, this enables the explicit computation of  $\dim\mathfrak{B}_{C/K}[2]~(\textup{mod }2)$ for arbitrary semistable hyperelliptic curves; we present the result of this computation as \Cref{mod 2 betts group cor}. (Strictly speaking, we only carry out   these computations over a suitably large  odd degree unramified extension of $K$. This suffices for the application to \Cref{Kramer Tunnell}  thanks to \Cref{odd degree extension}, and has the advantage that several statements in \cite{DDMM18} simplify after such an extension.)

Separately, in \Cref{sec:ram_twist_hyp_curve} we  present an explicit combinatorial description of the minimal proper regular model of a ramified quadratic twist of a semistable hyperelliptic curve. This will be deduced from work of Faraggi--Nowell \cite{MR4201122} which more generally describes the minimal regular SNC model of a hyperelliptic curve $X$ over a local field of odd residue characteristic, under the assumption that $X$ attains semistable reduction after a tamely ramified extension of the base field. We combine this description with \Cref{tam computations}  to describe explicitly the quantity $(-1)^{\textup{ord}_2 c(J^L/K)+\epsilon(C^L/K)}$; we present this result as \Cref{main_ram_quad_twist_cor}.

 Finally, in \Cref{completion_ram_quad_odd_res}, we combine Corollaries \ref{mod 2 betts group cor} and \ref{main_ram_quad_twist_cor} to establish \eqref{ramified_goal}.

 \section{The parity of   $\dim\mathfrak{B}_{\Lambda}[2]$} \label{betts_group_parity}

The aim of this section is to prove \Cref{prop:betts_group_mod_2}, which gives an explicit criterion for determining the parity of $\dim \mathfrak{B}_\Lambda[2]$, where $\mathfrak{B}_{\Lambda}$ is the group defined in \Cref{betts_group_defi_section} below and considered by Betts--Dokchitser in \cite{MR3933907}. This can be viewed as a complement to the results of \cite[Section 2]{MR3933907}.

Let $k$ be a finite field. Take $\Lambda$ to be a (discrete) $\mathbb{Z}[G_k]$-module, free and of finite rank as a $\mathbb{Z}$-module, and equipped with a non-degenerate $G_k$-invariant symmetric bilinear pairing 
\begin{equation}\label{eq:pairing_on_lambda}
\left \langle~,~\right \rangle: \Lambda \times \Lambda \longrightarrow \mathbb{Z}.\end{equation}
We extend $\left \langle ~,~\right \rangle$  bilinearly to a pairing on the $\mathbb{Q}[G_k]$-module $V:=\Lambda \otimes_\mathbb{Z} \mathbb{Q}$ and write $\Lambda^\vee$ for the dual lattice 
\begin{equation}  \label{reinerpret_lambda}
\Lambda^\vee=\{v\in V~~\colon~~\left \langle v, \lambda \right \rangle \in \mathbb{Z}~~\textup{for all } \lambda \in \Lambda\}.
\end{equation}
The map $v \mapsto \left \langle v,-\right \rangle$ identifies $\Lambda^\vee$ with $\textup{Hom}(\Lambda,\mathbb{Z})$. We denote by $\Phi$ the finite abelian group $\Lambda^\vee/\Lambda$, the discriminant group of the lattice.  The pairing on $V$ restricts to a $G_k$-invariant pairing $\Lambda^\vee \times \Lambda \rightarrow \mathbb{Z}$, and further induces a non-degenerate symmetric bilinear pairing
\begin{equation} \label{pairing_on_discriminant_group}
\overline{\left \langle ~,~\right \rangle}:\Phi \times \Phi \longrightarrow \mathbb{Q}/\mathbb{Z}.
\end{equation}

\subsection{The group $\mathfrak{B}_\Lambda$} \label{betts_group_sub_sec} \label{betts_group_defi_section}

Define the finite abelian group  
\[\mathfrak{B}_{\Lambda}:= \textup{im}\big(H^1(G_k,\Lambda)\longrightarrow H^1(G_k,\Lambda^\vee)\big) =\ker\big(H^1(G_k,\Lambda^\vee)\longrightarrow H^1(G_k,\Phi)\big). \]
Consider the pairing
\begin{equation} \label{pairing_on_B}
H^1(G_k,\Lambda)\otimes H^1(G_k,\Lambda)\longrightarrow H^2(G_k,\mathbb{Z})\stackrel{\sim}{\longrightarrow}H^1(G_k,\mathbb{Q}/\mathbb{Z})\stackrel{\textup{eval. at }F}{\longrightarrow}\mathbb{Q}/\mathbb{Z}.
\end{equation}
Here the first map is composition of cup-product with the pairing \eqref{eq:pairing_on_lambda}, and the second is the inverse of the coboundary map $\delta:H^1(G_k,\mathbb{Q}/\mathbb{Z})\rightarrow H^2(G_k,\mathbb{Z})$ arising from the short exact sequence 
\begin{equation}\label{basic_exact_seq}
0\longrightarrow \mathbb{Z}\longrightarrow \mathbb{Q}\longrightarrow \mathbb{Q}/\mathbb{Z}\longrightarrow 0.
\end{equation}
The final map is given by evaluating cocycles at the Frobenius element $F\in G_k$. 

We have the following result of Betts--Dokchitser.

\begin{proposition}
 Lifting to $H^1(G_k,\Lambda)$ and applying the pairing \eqref{pairing_on_B} induces a non-degenerate antisymmetric bilinear pairing 
\[(~,~):\mathfrak{B}_\Lambda \times \mathfrak{B}_\Lambda \longrightarrow \mathbb{Q}/\mathbb{Z}.\]
\end{proposition}

\begin{proof}
This is \cite[Proposition 2.2.2]{MR3933907}. The key point is the antisymmetry of the top pairing, and non-degeneracy of the bottom pairing, in the commutative diagram 
 \begin{equation} \label{commutative_betts_cup_diag}
\begin{tikzcd}
H^1(G_k,\Lambda)\otimes H^1(G_k,\Lambda) \arrow{d}{\alpha \otimes 1} \arrow{r}{\cup}  &H^2(G_k,\mathbb{Z})\arrow[d,equals]    \\
H^1(G_k,\Lambda^\vee)\otimes H^1(G_k,\Lambda)   \arrow{r}{ \cup}  &H^2(G_k,\mathbb{Z}) ,
\end{tikzcd}
\end{equation} 
where  $\alpha$ is induced by the inclusion of $\Lambda$ into $\Lambda^\vee$. See \cite[Proposition 2.2.2]{MR3933907} for a proof of these facts.
\end{proof}

\begin{cor}
 The order of $\mathfrak{B}_\Lambda$ is either a square or twice a square. Moreover, $\mathfrak{B}_\Lambda$ has square order if and only if 
$\dim  \mathfrak{B}_\Lambda[2]$ is even. 
\end{cor}

\begin{proof}
This is a formal consequence of the existence of a non-degenerate antisymmetric $\mathbb{Q}/\mathbb{Z}$-valued bilinear pairing on $\mathfrak{B}_\Lambda$; see  \cite[Theorem 2.4.1]{MR3933907} for a proof.
\end{proof}
 
Now consider the map $\mathfrak{B}_\Lambda\rightarrow \mathbb{Q}/\mathbb{Z}$ given by $x\mapsto (x,x)$. This is a homomorphism by antisymmetry of $(~,~)$, so by non-degeneracy there is a unique $\mathfrak{c}\in  \mathfrak{B}_\Lambda$ such that 
\begin{equation}
(x,x)=(\mathfrak{c},x) \quad \textup{ for all }x\in \mathfrak{B}_\Lambda.
\end{equation}
It follows from the arguments of \cite[Section 6]{MR1740984} that one has
\begin{equation} \label{poonen_stoll_odd_even_criterion}
\dim  \mathfrak{B}_\Lambda[2]\equiv 0~~(\textup{mod }2)\quad \quad \textup{if and only if }\quad \quad (\mathfrak{c},\mathfrak{c})=0.
\end{equation}
In \Cref{poonen_stoll_for_betts_groups} below we give an explicit description of this class $\mathfrak{c}$. The construction involves quadratic refinements of the pairings \eqref{eq:pairing_on_lambda} and \eqref{pairing_on_discriminant_group}.

\subsection{Quadratic refinements of the pairings \eqref{eq:pairing_on_lambda} and \eqref{pairing_on_discriminant_group}} \label{subsec:quad_forms_on_lattices}
We begin with some notation.

\begin{notation}
For abelian groups $A$ and $M$,  call a function $q:A\rightarrow M$ a quadratic map if the function $B_q:A\times A \rightarrow M$ defined by $B_q(a_1,a_2)=q(a_1+a_2)-q(a_1)-q(a_2)$ is bilinear. We call $q$ a quadratic form if moreover, for all $a\in A$ and $n\in \mathbb{Z}$, we have $q(na)=n^2q(a)$.  
We say that $q$ is a quadratic refinement of $B_q$. 
Denote by $\mathcal{Q}_\Lambda$ the set of $\mathbb{Z}$-valued quadratic refinements of \eqref{eq:pairing_on_lambda}, and by $\mathcal{Q}_\Phi$ the set of $\mathbb{Q}/\mathbb{Z}$-valued quadratic refinements of \eqref{pairing_on_discriminant_group}.
\end{notation}
 
Now define the subset $S\subseteq V$ as 
\begin{equation} \label{eq:set_S_quadratic_refinements}
S=\big\{v\in \Lambda^\vee~~\colon~~\left \langle \lambda,\lambda \right \rangle \equiv \left \langle \lambda, v\right \rangle~(\textup{mod }2)~~\textup{ for all }\lambda \in \Lambda\big\}.
\end{equation}
One checks using the fact that  $\lambda \mapsto \left \langle \lambda,\lambda \right \rangle ~(\textup{mod }2)$
is a homomorphism that $S$ is nonempty.
For $v\in S$ denote by $q_v: \Lambda \rightarrow \mathbb{Z}$ the quadratic map
\begin{equation} \label{quad_refinement_formula_lattice}
q_v(\lambda)=\tfrac{1}{2}\left(\left\langle \lambda,\lambda \right \rangle + \left \langle \lambda,v\right \rangle\right).
\end{equation}
Sending $v$ to $q_v$ gives a bijection from $S$ to $\mathcal{Q}_\Lambda$. Moreover, taking $\lambda \in \Lambda^\vee$ in the formula \eqref{quad_refinement_formula_lattice},  and then reducing modulo $\mathbb{Z}$, gives a quadratic refinement 
$\overline{q}_v: \Phi \rightarrow \mathbb{Q}/\mathbb{Z} $
of  the pairing \eqref{pairing_on_discriminant_group}. This is a quadratic form if and only if $v\in \Lambda$. The map $v \mapsto \overline{q}_v$ is a bijection between  $S/2\Lambda$ (the quotient of $S$ by the action of $2\Lambda$)  and $\mathcal{Q}_{\Phi}$. 

\subsection{Cohomology classes associated to quadratic refinements}

Since the pairing \eqref{eq:pairing_on_lambda}  is $G_k$-invariant,  $G_k$ acts on $\mathcal{Q}_\Lambda$. Explicitly, for $\sigma\in G_k$ and $q\in \mathcal{Q}_\Lambda$ we define $^{\sigma}q:\Lambda\rightarrow \mathbb{Z}$ by setting $^{\sigma}q(\lambda)=q(\sigma^{-1}\lambda)$.  Given $q_1, q_2\in \Lambda$ we have $q_1-q_2\in \textup{Hom}(\Lambda,\mathbb{Z})=\Lambda^\vee$; thus $\Lambda^\vee$ acts simply transitively on $\mathcal{Q}_\Lambda$. In particular, associated to $\mathcal{Q}_\Lambda$ is a class $\mathfrak{q}_\Lambda \in H^1(G_k,\Lambda^\vee)$, explicitly represented by the $1$-cocycle 
$\sigma \mapsto {}^{\sigma}q-q$
for any $q \in \mathcal{Q}_\Lambda$. We similarly have  $\mathfrak{q}_\Phi\in H^1(G_k,\Phi)$ associated to $\mathcal{Q}_\Phi$.\footnote{To be more precise, mimicking the construction of $\mathfrak{q}_\Lambda$ yields a class in $H^1(G_k,\Phi^*)$ where $\Phi^*=\textup{Hom}(\Phi,\mathbb{Q}/\mathbb{Z})$; we transport this class to $H^1(G_k,\Phi)$ via the isomorphism $\Phi \stackrel{\sim}{\rightarrow}\Phi^*$ provided by the pairing  \eqref{pairing_on_discriminant_group}.}

\begin{remark} \label{rem:explicit_coccle_lattice_forms}
The discussion in \Cref{subsec:quad_forms_on_lattices} above provides a more explicit description of the classes $\mathfrak{q}_\Lambda$ and $\mathfrak{q}_\Phi$. Let $v\in S$ and let $q_v$ (resp. $\overline{q}_v$) be the associated element of $\mathcal{Q}_\Lambda$ (resp. $\mathcal{Q}_{\Phi}$). Computing the associated cocycles one sees that $\mathfrak{q}_\Lambda$ and $\mathfrak{q}_\Phi$ are represented by the $1$-cocycles 
\[\sigma\mapsto  \tfrac{1}{2}(\sigma v-v) \in \Lambda^\vee\quad \textup{ and }\quad \sigma\mapsto  ~\tfrac{1}{2}(\sigma v-v)~~(\textup{mod }\Lambda)\] 
respectively. Note in particular that $\mathfrak{q}_\Lambda$ maps to $\mathfrak{q}_\Phi$ under the natural map $H^1(G_k,\Lambda^\vee)\rightarrow H^1(G_k,\Phi)$. 
\end{remark}

\begin{lemma} \label{poonen_stoll_for_betts_groups}
With the notation above, we have the following. 
\begin{itemize}
\item[(i)]  The element $\mathfrak{q}_\Lambda\in H^1(G_k,\Lambda^\vee)$ lies in $\mathfrak{B}_\Lambda$.
\item[(ii)]   We have $(x,x)=(\mathfrak{q}_\Lambda,x)$ for all $x\in \mathfrak{B}_\Lambda$. Thus $\mathfrak{q}_\Lambda$ is the class $\mathfrak{c}$ of \Cref{betts_group_sub_sec}.
\end{itemize}
\end{lemma}

\begin{proof}
It follows from \cite[Corollary 2.8]{MR2915483} that for all $\rho \in H^1(G_k,\Lambda)$ we have
\begin{equation} \label{poonen_rains_input}
\mathfrak{q}_\Lambda\cup \rho= \rho \cup \rho  \quad\textup{ inside } H^2(G_k,\mathbb{Z}). 
\end{equation}
Here both cup-products are induced by the pairing $\left \langle ~,~\right \rangle$. From this identity and commutativity of \eqref{commutative_betts_cup_diag} it follows that $\mathfrak{q}_\Lambda\cup \rho=0$ for all $\rho \in \ker \left(H^1(G_k,\Lambda)\stackrel{}{\rightarrow} H^1(G_k,\Lambda^\vee)\right)$. It now follows formally from  the stated properties of the pairings in \eqref{commutative_betts_cup_diag} that $\mathfrak{q}_\Lambda\in \mathfrak{B}_\Lambda$. This proves part (i), and part (ii) now follows from \eqref{poonen_rains_input} and the definition of the pairing $(~,~)$ on $\mathfrak{B}_\Lambda$.
\end{proof}

\begin{remark} \label{invariant_refinement_remark}
Since $\mathfrak{q}_\Lambda$ is a lift of $\mathfrak{q}_\Phi$ to $H^1(G_k,\Lambda^\vee)$, it follows from  \Cref{poonen_stoll_for_betts_groups} (i) that the class $\mathfrak{q}_\Phi\in H^1(G_k,\Phi)$ is trivial. In particular, the pairing \eqref{pairing_on_discriminant_group} on $\Phi$ admits a $G_k$-invariant quadratic refinement. From the discussion in \Cref{subsec:quad_forms_on_lattices}, such a quadratic refinement is necessarily of the form $\overline{q}_v$ for some $v\in S$. The $G_k$-invariance of $\overline{q}_v$ means that such a $v$ satisfies $\sigma v-v\in 2\Lambda$ for all $\sigma \in G_k$. 
\end{remark}

\subsection{The order of $\mathfrak{B}_{\Lambda}$ modulo squares}
We now give the promised criterion for determining the parity of $\dim  \mathfrak{B}_\Lambda[2]$. As usual, $F\in G_k$ denotes the Frobenius element. The set $S$ is as defined in \eqref{eq:set_S_quadratic_refinements}.  

\begin{proposition} \label{prop:betts_group_mod_2}
There exists $x\in S$ such that $\tfrac{1}{2}(Fx-x)\in \Lambda$. For any such $x$ we have
\[\dim  \mathfrak{B}_\Lambda[2]\equiv\left \langle x,\tfrac{1}{2}(Fx-x) \right \rangle ~~(\textup{mod }2).\]
\end{proposition}

\begin{proof}
For existence, take any $x\in S$ for which the associated quadratic form $\overline{q}_x$ is $G_k$-invariant (cf. \Cref{invariant_refinement_remark}).  

Now fix such an $x$ and denote by $a:G_k\rightarrow \Lambda$ the $1$-cocycle  $a(\sigma)= \frac{1}{2}(\sigma x-x)$. Its class in  $H^1(G_k,\Lambda)$ is a lift of $\mathfrak{q}_{\Lambda} \in \mathfrak{B}_{\Lambda}$ to $H^1(G_k,\Lambda)$.  Consider the commutative diagram 
\begin{equation} \label{betts_diag_1_2}
\begin{tikzcd}
H^0(G_k,V/\Lambda^\vee)\otimes H^1(G_k,\Lambda) \arrow{d}{\delta\otimes 1} \arrow{r}{\cup}  &H^1(G_k,\mathbb{Q}/\mathbb{Z})\arrow{d}{\delta}    \\
H^1(G_k,\Lambda^\vee)\otimes H^1(G_k,\Lambda)   \arrow{r}{ \cup}  &H^2(G_k,\mathbb{Z}).
\end{tikzcd}
\end{equation}
Here the coboundary maps $\delta$ arise  from the short exact sequence \eqref{basic_exact_seq} and the corresponding sequence given by tensoring \eqref{basic_exact_seq} by $\Lambda^\vee$,
 and both cup-products are induced by the pairing $\left \langle ~,~\right \rangle$. The element $\frac{1}{2}x \in V/\Lambda^\vee$ defines an element of $H^0(G_k,V/\Lambda^\vee)$ which  maps under $\delta$ to $\mathfrak{q}_\Lambda$. From commutativity of \eqref{betts_diag_1_2} and the definition of the pairing $(~,~)$ on $\mathfrak{B}_\Lambda$, we find
  \[(\mathfrak{q}_\Lambda,\mathfrak{q}_{\Lambda})=\left \langle \tfrac{1}{2}x, a(F)\right \rangle =\tfrac{1}{2}\left \langle x,\tfrac{1}{2}(Fx-x) \right \rangle \in \tfrac{1}{2}\mathbb{Z}/\mathbb{Z}.\]
The result now follows from \eqref{poonen_stoll_odd_even_criterion} and \Cref{poonen_stoll_for_betts_groups} (ii).
\end{proof}

\begin{remark} \label{main_betts_dimension_rephrase}
Identifying $\Lambda^\vee$ with $\textup{Hom}(\Lambda,\mathbb{Z})$ via the map $v \mapsto \left \langle v,- \right \rangle$ leads to the following rephrasing of \Cref{prop:betts_group_mod_2}: given $\phi \in \textup{Hom}(\Lambda,\mathbb{Z})$ such that 
$\left \langle \lambda,\lambda \right \rangle \equiv \phi(\lambda)~~(\textup{mod }2)$ for all $\lambda\in \Lambda$,
and such that $\frac{1}{2}(F \phi -\phi )=\left \langle \lambda,- \right \rangle$ for a (necessarily unique) $\lambda \in \Lambda$, we have
\[\dim  \mathfrak{B}_\Lambda[2]\equiv\phi(\lambda) ~~(\textup{mod }2).\] 
Indeed, the unique $x\in \Lambda^\vee$ for which $\phi=\left \langle x,-\right \rangle$ satisfies the conditions of \Cref{prop:betts_group_mod_2}.
\end{remark}

\section{Clusters and the group $\mathfrak{B}_{C/K}$ for semistable hyperelliptic curves} \label{sec:clusters}  
We take the notation of \Cref{min reg model ramified}. In particular, $K$ denotes a nonarchimedean local field with residue field $k$ of odd characteristic, and $C/K$ denotes a hyperelliptic curve with semistable reduction. We henceforth fix a Weierstrass equation $y^2=f(x)$ for $C$, where  $f(x)\in K[x]$ is a squarefree polynomial of degree $2g+1$ or $2g+2$ for $g\geq 2$ the genus of $C$. We denote by $\mathcal{R}$ the set of roots of $f(x)$ in $\K^s$ and denote by $c_f$ the leading coefficient of $f(x)$. Thus
\[f(x)=c_f\prod_{r\in \mathcal{R}}(x-r).\]

\subsection{Clusters} \label{subsec_clusters_defs}  
We now recall several results from \cite{DDMM18}, which provides a framework for studying invariants of hyperelliptic curves over local fields of odd residue characteristic. We refer to that work for more details  (cf. also \cite{hyperuser}). The central object is that of a \textit{cluster}. In what follows we denote by $v:\bar{K}\rightarrow \mathbb{Q}\cup \{\infty\}$ the extension to $\bar{K}$ of the normalised valuation on $K$.

\begin{defi}
A \textit{cluster} is a non-empty subset $\c\subset\mathcal{R}$ of the form $\c = D \cap \mathcal{R}$
for some disc $D=\{x\!\in\! \bar{K} \mid v(x-z)\!\geq\! d\}$
where  $z\in \bar{K}$ and $d\in \mathbb{Q}$. 
If $|\s|>1$ then $\s$ is said to be \emph{proper} and its \emph{depth} $d_\s$ is defined as 
$$
  d_\s = \min_{r,r' \in \mathfrak{s}} v(r-r').
$$
We call any element $z_\s$ of the minimal disc cutting out a proper cluster $\s$ a \textit{centre} for $\s$. 
\end{defi}
 
We summarize some terminology for clusters. 

\begin{defi} \label{def:cluster_terminology}
Given clusters $\s_1\neq\s_2$ with $\s_1$ a maximal subcluster of $\s_2$, we say that $\s_1$  is a \emph{child} of $\s_2$, denote this $\s_1<\s_2$, and refer to $\s_2$ as the \emph{parent} of $\s_1$. Any cluster $\s\neq \mathcal{R}$ has a unique parent $P(\s)$. We define the \emph{relative depth} of a proper cluster $\s\neq \mathcal{R}$ as
\[\delta_\s:=d_{\s}-d_{P(\s)}\geq 0.\]
We call a cluster \emph{even} (resp. \emph{odd}) if it contains an even (resp. odd) number of roots, and call it \emph{\ub}~if it is even and all its children are even also. We call a cluster $\s$ {\em principal} if $|\s|\ge 3$, save when either $\s=\cR$ is even and has exactly two children, or when $\s$ has a child of size $2g$. A cluster of size $2$ is called a \emph{twin}, and  a non-\ub\ cluster that has a child of size~$2g$ is called a \emph{cotwin}. For a principal cluster $\s$, its \textit{genus} $g(\s)$ is defined as \[g(\s)=\big\lfloor \frac{1}{2}(\#\{\textup{odd children of } \s\}-1)\big\rfloor.\] Finally, for clusters  $\c_1$, $\c_2$, we write $\c_1\wedge \c_2$ for the smallest cluster containing both $\c_1$ and $\c_2$. 
\end{defi}

\begin{example} \label{example_of_a_cluster_picture}  
Take $C$ to be the hyperelliptic cuve 
\[C/\mathbb{Q}_3:y^2=(x^2+3)((x-i)^2-3^2)((x+i)^2-3^2)\]
 considered in \Cref{unram_genus_2_ub_example}, where $i$ is a square root of $-1$.  The proper clusters are 
\[\mathcal{R}=\{\pm i\sqrt{3}, i\pm 3, -i \pm 3\},~\mathfrak{t}_1=\{\pm i \sqrt{3}\},~\mathfrak{t}_2=\{i \pm 3\},~\mathfrak{t}_3=\{-i \pm 3\}.\]
The unique principal cluster is $\mathcal{R}$ and it is \ub, has depth $0$ and genus $0$. There are $3$ twins: $\mathfrak{t}_1$, $\mathfrak{t}_2$ and $\mathfrak{t}_3$, and we have $\delta_{\mathfrak{t}_1}=1/2$, $\delta_{\mathfrak{t}_2}=\delta_{\mathfrak{t}_3}=1$. We display this information pictorially as shown:
\begin{center}
		\clusterpicture            
  \Root[A] {} {first} {r1};
  \Root[A] {} {r1} {r2};
  \Root[A] {6} {r2} {r3};
  \Root[A] {} {r3} {r4};
  \Root[A] {6} {r4} {r5};
  \Root[A] {} {r5} {r6};
  \ClusterLDName c1[][\tfrac{1}{2}][ ] = (r1)(r2);
    \ClusterLDName c2[][1][ ] = (r3)(r4);
      \ClusterLDName c3[][1][ ] = (r5)(r6);
  \ClusterLDName c4[][0][] = (c1)(c3)(c3);
\endclusterpicture 
\end{center}
Here we draw roots as 
\smash{\raise4pt\hbox{\clusterpicture\Root[A]{}{first}{r1};\endclusterpicture}}
and draw ovals around roots to represent a proper cluster. The subscript on the outer cluster is its depth, and on all other clusters it is the relative depth. We refer to this diagram as the \textit{cluster picture} of $C$. 

We remark that the description of the minimal regular model of $C$ given previously in \Cref{unram_genus_2_ub_example} now follows immediately from \cite[Theorems 1.11 and 8.6]{DDMM18}.
\end{example}

For the rest of this section we make the following assumption, which will lead to several simplifications in the results of \cite{DDMM18}. 

\begin{assumption} \label{assump:no_cotwin}
We assume that $|\mathcal{R}|=2g+2$ and  that there are no clusters of size $2g$ or $2g+1$.
\end{assumption}

\begin{remark} \label{can_reduce_to_no_cotwins_remark}
 By \cite[Theorem 15.2]{DDMM18}, any semistable hyperelliptic curve over $K$ is isomorphic to a curve satisfying \Cref{assump:no_cotwin} over any suitably large odd degree unramified extension of $K$ (the key point being that if the residue field of $K$ is sufficiently large then one can make a change of variables to force \Cref{assump:no_cotwin} to be satisfied). 
 \end{remark}

We now summarize certain results from \cite{DDMM18}, using   \Cref{assump:no_cotwin} to simplify several statements. First, the fact that $C$ is semistable forces several constraints on the possible clusters and their depths. Specifically we have:

\begin{theorem}[\cite{DDMM18} Theorem 1.8] \label{semi criterion}
Semistability of $C$ is equivalent to the following three conditions:
\begin{enumerate}
\item
the extension $K(\cR)/K$ has ramification degree at most 2,
\item
every proper cluster is invariant under the action of the inertia group of $K$,
\item
every principal cluster $\s$ has $d_\c \in\Z$ and 
$\nu_\s \in 2\Z$,
\end{enumerate}
where $\nu_\s$ is the quantity
\begin{equation} \label{deifnition_of_quantity_nu}
\nu_\s= v(c_f)+|\s|d_\s+\sum_{r\notin\c} d_{\{r\}\wedge \s}.
\end{equation}
\end{theorem}

Note that by part (1) of \Cref{semi criterion}, each inertia orbit of roots has size at most $2$ (i.e. the irreducible factors of $f(x)$ over $K^\textup{nr}$ are linear or quadratic), and every cluster $\s$ has $d_\s\in \frac{1}{2}\mathbb{Z}$. The set of proper cluster $\mathfrak{s}$ with   $d_\s\notin \mathbb{Z}$ will be of particular importance.

\begin{notation} \label{notat_T_set}
Let $T$ denote the set of proper clusters $\s$ with $d_\s\notin \mathbb{Z}$. 
\end{notation}

\begin{lemma} \label{integer relative depths}
If $r\neq r'$ are inertia conjugate elements of $\mathcal{R}$, then $\s=\{r,r'\}$ is a cluster with $d_\s\notin \mathbb{Z}$. Moreover, every proper cluster $\s$ with $d_\s\notin \mathbb{Z}$ takes this form. 
\end{lemma} 

\begin{proof}
If $r$ and $r'$ are inertia conjugate roots then $v(r-r')\in 1/2+\mathbb{Z}$ (cf. \cite[Lemma C.2]{DDMM18}), so   the minimal cluster containing both $r$ and $r'$ has non-integer depth. In light of \Cref{assump:no_cotwin}, it follows from  \cite[Lemma 4.2]{DDMM18} that $\{r,r'\}$ is a cluster. Moreover, \cite[Lemma 4.2]{DDMM18} shows further  that any proper cluster $\s$ with $d_\s\notin \mathbb{Z}$ takes this form.
\end{proof}

A consequence of \Cref{integer relative depths} is that $T$ is naturally in bijection with the set of inertia-conjugate pairs of roots of $f(x)$.

\subsection{Signs associated to clusters}

By \Cref{semi criterion}(2), the assumption that $C$ is semistable means that $\textup{Gal}(K^{\textup{nr}}/K)=G_k$ acts on the set of proper clusters. We will augment this action by adding in certain signs associated to even clusters (cf. \cite[Definition 1.12]{DDMM18}).

\begin{notation} \label{notat:s_star_notation}
 For a cluster $\s$,  we write $\s^*$ for the smallest cluster $\s^*\supseteq\s$ whose parent is not \ub, and set $\s^*=\cR$ if no such cluster exists. 
\end{notation}

\begin{defi} \label{definition:epsilon_signs}
For even clusters $\s$ fix a choice of $\theta_\c = \sqrt{c_f\prod\nolimits_{r \notin \c} (z_\c-r)}$, where $z_s$ is any centre for $\s$.
Still assuming $\c$ is  even,  define 
$\epsilon_\s:G_K\to \{\pm 1\}$ by
$$
  \epsilon_{\c}(\sigma) \equiv \frac{\sigma(\theta_{\c^*})}{\theta_{(\sigma\s)^*}} \mod \mathfrak{m}.
$$
Here $\mathfrak{m}$ denotes the maximal ideal of the ring of integers of $\bar{K}$, so that `$\textup{mod}~ \mathfrak{m}$' denotes reduction to the residue field $\bar{k}$.
\end{defi}

\begin{remark} \label{signs remark 1}
If $\s\neq \mathcal{R}$ is an even cluster, or $\s=\mathcal{R}$ is \ub, then 
\[v\big(c_f\prod_{r\notin \s}(z_\s-r)\big)=\nu_\s-|\s|d_\s\]
is an even integer
(here $\nu_\s$ is as defined  in the statement of \Cref{semi criterion}).  Indeed, by \cite[Lemma C.5]{DDMM18} we have $\nu_\s-|\s|d_\s=\nu_{P(\s)}-|\s|d_{P(\s)}$, and by  Lemmas 4.2 and 4.9 of op. cit.  we see that one of $\s$ or $P(\s)$ must have both integral depth and even $\nu$. In particular, it follows that $\theta_\s\in K^\textup{nr}$. Thus  $\epsilon_\s$ descends to a function $G_k\rightarrow \{\pm 1\}$.
\end{remark}
 
\begin{remark} \label{signs remark 2}
As explained in \cite[Remark 1.14]{DDMM18}, although the function  $\epsilon_\s$ depends on the choice of $\theta_\s$, the restriction of $\epsilon_\s$ to the stabiliser of $\s$ does not. In fact, if $K_\s$ denotes the fixed field of $\bar{K}$ by the stabiliser of $\s$, then $K_\s$ is a finite unramified extension of $K$, $\theta_{\s^*}^2\in K_\s$, and $\epsilon_\s$ restricted to the stabiliser of $\s$ is the quadratic character associated to the extension $K_\s(\theta_{\s^*})/K_\s$. 
\end{remark}

\begin{example} \label{epsilon_as_in_example}
Let $C$ be as in \Cref{example_of_a_cluster_picture} and let $\mathfrak{s}$ be any of the $4$ even clusters. Then we have $\mathfrak{s}^*=\mathcal{R}$. We can take $z_\mathcal{R}=0$ and $\theta_{\mathcal{R}}=1$. Then $\epsilon_\mathfrak{s}(\sigma)=1$ for all $\sigma$.  

We remark that the description of the Frobenius action on the special fibre of the minimal regular model of $C$ detailed  previously in \Cref{unram_genus_2_ub_example} can be read off from this data, coupled with the Frobenius action on the set of proper clusters; see \cite[Theorems 8.6]{DDMM18}.  
\end{example}

\subsection{Description of the lattice}
We retain the notation from \Cref{min reg model}. In particular, $\Upsilon_C$ denotes the dual graph of the (geometric special fibre of the) minimal proper regular model of $C$. Here we recall from \cite{DDMM18} a description of the $\mathbb{Z}[G_k]$-module $H_1(\Upsilon_{C},\Z)$ along with its associated pairing.  It will be convenient to first define an auxiliary lattice $\Pi$ which is closely related to $H_1(\Upsilon_{C},\Z)$, but which is simpler to describe.

\begin{defi}
Let $\mathrm{A}$ be the set of even non-\ub~clusters excluding $\mathcal{R}$. Define 
\[ \Pi=\bigoplus_{\mathfrak{s}\in \mathrm{A}}\mathbb{Z}\ell_\mathfrak{s}.\] 
Further, let $\mathrm{B}$ be the subset of $\mathrm{A}$ consisting of clusters $\s$ with $\s^*=\mathcal{R}$.   We  endow   $\Pi$ with the symmetric  pairing \[\left \langle ~,~\right \rangle:\Pi\times \Pi\longrightarrow \mathbb{Z}\]
given by
\begin{equation} \label{lattice_pairing}
\langle \ell_{\c_1},\ell_{\c_2} \rangle=\begin{cases}
        0&   \c^*_1 \neq \c^*_2, \\
        2(d_{\c_1\wedge\c_2}-d_{P(\c^*_1)})& \c_1,\c_2\notin \mathrm{B},~~\c^*_1 =\c^*_2, \\
        2(d_{\c_1\wedge\c_2}-d_{\mathcal{R}})&  \c_1,\c_2\in \mathrm{B}. 
        \end{cases}
\end{equation}
 We further endow  $\Pi$ with the $G_k$-action given by $\sigma \cdot \ell_\s=\epsilon_\s(\sigma)\ell_{\sigma \s}$. Note that the pairing  $\left \langle ~,~\right \rangle$ is invariant for this action. 
\end{defi}

It will be useful to note that the pairing on $\Pi/2\Pi$ induced from that on $\Pi$ has a very simple form.

\begin{lemma} \label{mod 2 pairing}
For all clusters $\s_1,\s_2\in \mathrm{A}$ we have
\[\left \langle \ell_{\s_1},\ell_{\s_2}\right \rangle\equiv \begin{cases}1 ~~(\textup{mod }2)~~&~~\s_1=\s_2\in T,\\ 0~~(\textup{mod }2)~~&~~\textup{else}. \end{cases}\]
\end{lemma}

\begin{proof}
Combine \Cref{integer relative depths} with the formula \eqref{lattice_pairing}.
\end{proof}

 \begin{defi} \label{label_lambda_def_concrete}
Define the lattice
\[ \Lambda= \Bigl\{ \sum_{\c\in \mathrm{A}} a_{\c}\ell_{\c} \in \Pi \Bigm|~~\sum_{\c\in \mathrm{B}} a_{\c}=0\Bigr\}\]
along with the pairing and $G_k$-action induced from that on $\Pi$.
 \end{defi}

\begin{theorem}[\cite{DDMM18} Theorem 1.14] \label{lattice comparison}
We have  $\Lambda\cong H_1(\Upsilon_{C},\Z)$ as $\mathbb{Z}[G_k]$-modules equipped with a pairing.
\end{theorem}

\subsection{The group $\mathfrak{B}_{C/K}$ for semistable hyperelliptic curves} \label{sec:hyp_betts_group_calculation}

Let $\mathfrak{B}_{C/K}$ be the group defined in \eqref{eq:betts_group_def_C}. Let $\Lambda$ be as in \Cref{label_lambda_def_concrete} above, and let $\mathfrak{B}_\Lambda$ be the associated group defined in \Cref{betts_group_defi_section}. By \Cref{lattice comparison} we have $\mathfrak{B}_{C/K} \cong \mathfrak{B}_\Lambda$. Recall from \Cref{notat_T_set} the definition of the set $T$. We denote by $F\in G_k$ the Frobenius element, and for a cluster $\mathfrak{s}$ we denote by $\textup{Orb}_\s$ its $G_k$-orbit.

 \begin{proposition} \label{h1 pi computation}
Let $N$ be the number of $G_k$-orbits $O\in T/G_k$ with $\prod_{\mathfrak{s}\in O}\epsilon_\mathfrak{s}(F)=-1$.   

Then we have 
\[\dim \mathfrak{B}_{C/K}[2] \equiv\begin{cases}N+1~~(\textup{mod }2)~~&~~\epsilon_{\mathcal{R}}(F)=-1,~~g \textup{ even, } \textup{all }\s\in \mathrm{B}\setminus T\textup{ have }\mathrm{|\textup{Orb}_\s|} \textup{ even,}\\ N~~(\textup{mod }2)~~&~~\textup{otherwise.} \end{cases}\]
\end{proposition}

We begin with the following lemma which will be needed during the proof. 

\begin{lemma} \label{genus congruence}
Suppose that $\mathrm{B}\neq \emptyset$ and that all $\s\in \mathrm{B}\setminus T$ have $|\textup{Orb}_\s|$ even. Then
\[ |\mathrm{B}\cap T|\equiv g-1~~(\textup{mod }2).\]
\end{lemma}

\begin{proof}
 Note that the assumption that $B$ is non-empty means that $\mathcal{R}$ is \ub. Let $\s\neq \mathcal{R}$ be a proper cluster with $\s\notin \mathrm{B}\cap T$. We  claim that $|\textup{Orb}_\s|$ is even. Indeed, the assumptions on $\s$ mean in particular that $\s$ is contained in a child $\mathfrak{s}'$ of $\mathcal{R}$. Clearly  $\s'$ cannot be in $T$. Thus $\mathfrak{s}'\in B\setminus T$, hence $|\textup{Orb}_{\s'}|$ is even by assumption. Since $\s\subseteq \s'$ it follows that $|\textup{Orb}_\s|$ is even also, proving the claim. Next, combining \Cref{lattice comparison} and \cite[Theorem 1.10]{DDMM18} with \cite[Lemma 10.3.18]{MR1917232} gives
\begin{equation}\label{genus formula}
g=\textup{rk}\Lambda+\sum_{\s~\textup{principal}}g(\s).
\end{equation}
The assumption that $\mathrm{B}\neq \emptyset$ means that either $\mathcal{R}$ is non-principal or $g(\mathcal{R})=0$. Thus each principal cluster of positive genus has an even sized $G_k$-orbit, so the second term on the right hand side of \Cref{genus formula} is an even integer. We therefore have
\[ |\mathrm{A}|-1= \textup{rk}\Lambda\equiv g~~(\textup{mod }2).\]
By the initial claim, every cluster $\s \in\mathrm{A}$ has $|\textup{Orb}_\s|$ even, save possibly for those clusters in  $\mathrm{B}\cap T$. Thus $|\mathrm{A}|\equiv ~~|\mathrm{B}\cap T|~~(\textup{mod }2)$
and the result follows.
\end{proof}

\begin{proof}[Proof of \Cref{h1 pi computation}. ]
We will deduce the result from \Cref{prop:betts_group_mod_2}. 

 \textbf{Case 1: either $\mathrm{B}=\emptyset$, or $\mathrm{B}\neq \emptyset$ and $\epsilon_\mathcal{R}(F)=1$.}
Consider the element $t=\sum_{\mathfrak{s}\in T}\ell_\mathfrak{s}$ of $\Pi$.
 By \Cref{mod 2 pairing}, for all $\lambda \in \Lambda$ we have 
\[\left \langle \lambda,\lambda\right \rangle \equiv \left \langle \lambda,t \right \rangle \textup{ (mod }2).\] 
 Further, we have 
 \[ Ft-t=\sum_{\mathfrak{s}\in T}(\epsilon_{F^{-1}\mathfrak{s}}(F)-1)\ell_\mathfrak{s}.\]
 It follows that $Ft-t \in 2\Lambda$ (note that if $\mathrm{B}\neq \emptyset$ then for any $\mathfrak{s}\in T\cap \mathrm{B}$ we have $\epsilon_{F^{-1}\mathfrak{s}}(F)=\epsilon_\mathcal{R}(F)=1$). Taking $x=t$ in  \Cref{prop:betts_group_mod_2} then gives 
 \begin{eqnarray*}
 \dim \mathfrak{B}_{C/K}[2] & \equiv &\Big\langle  \sum_{\mathfrak{s}\in T}\ell_\mathfrak{s}~,~\sum_{\mathfrak{s}\in T}\tfrac{1}{2}(\epsilon_{F^{-1}\mathfrak{s}}(F)-1)\ell_\mathfrak{s} \Big \rangle~~\quad(\textup{mod }2)\\ 
 &\equiv & \#\{\mathfrak{s}\in T~~\colon~~\epsilon_\mathfrak{s}(F)=-1\}~~\quad(\textup{mod }2),
 \end{eqnarray*}
 the last congruence following from \Cref{mod 2 pairing}.\footnote{When $|T\cap B|$ is odd the element $t$ of $\Pi$ is not in $\Lambda$, so    \Cref{prop:betts_group_mod_2} does not naively apply. However, in this case we can take $\phi=\left \langle t,-\right \rangle \in \textup{Hom}(\Lambda,\mathbb{Z})$ in  \Cref{main_betts_dimension_rephrase} to see that the conclusion concerning $\dim \mathfrak{B}_{C/K}[2]$ remains valid.} Thus we have the result in this case.
 
\textbf{Case 2: $\mathrm{B}\neq\emptyset$, $\epsilon_\mathcal{R}(F)=-1$, $|\mathrm{B}\cap T|$ even.} Write $\mathrm{B}\cap T=\{\mathfrak{s}_1,...,\mathfrak{s}_{2k}\}$. This time we set $t=\sum_{\mathfrak{s}\in T\setminus \mathrm{B}}\ell_\mathfrak{s}+\sum_{i=1}^{2k}(-1)^i\ell_{\mathfrak{s}_i}$, noting that $t\in \Lambda$. As with the previous case, taking $x=t$ in  \Cref{prop:betts_group_mod_2} gives the result (note that by \Cref{genus congruence} we are trying to show that $\dim \mathfrak{B}_{C/K}[2] \equiv N$ (mod $2$) in this case).

\textbf{Case 3: $\mathrm{B}\neq \emptyset$, $\epsilon_\mathcal{R}(F)=-1$, $|\mathrm{B}\cap T|$ odd, $|\mathrm{Orb}_\mathfrak{s}|$ odd for some $\mathfrak{s}\in \mathrm{B}\setminus T$.} Choose some $\mathfrak{s}_1\in \mathrm{B}\setminus T$ with $m_1=|\textup{Orb}_{\mathfrak{s}_1}|$ odd, and write $m_2=|\mathrm{B}\cap T|$.  This time, take 
\[t=\sum_{\mathfrak{s}\in T\setminus \mathrm{B}}\ell_\mathfrak{s}+m_1\cdot\sum_{\mathfrak{s}\in T\cap \mathrm{B}}\ell_\mathfrak{s}-m_2\cdot\sum_{\mathfrak{s}\in \textup{Orb}_{\mathfrak{s}_1}}\ell_{\mathfrak{s}_1},\]
which lies in $\Lambda$ by construction. Again, we conclude by taking $x=t$ in \Cref{prop:betts_group_mod_2}.

\textbf{Case 4: $\mathrm{B}\neq \emptyset$, $\epsilon_\mathcal{R}(F)=-1$, $|\mathrm{B}\cap T|$ odd, all $\s\in \mathrm{B}\setminus T$ have $|\mathrm{Orb}_\s|$ even.} Note that in this case we have $g$ even by \Cref{genus congruence}, so we want to show that $\dim \mathfrak{B}_{C/K}[2] \equiv N+1$ (mod $2$). Since each $\s\in \mathrm{B}\setminus T$ has $|\mathrm{Orb}_\s|$ even, we can partition $\mathrm{B}\setminus T$ into two disjoint sets $\mathrm{B}_0$ and $\mathrm{B}_1$ with $F(\mathrm{B}_0)=\mathrm{B}_1$. For $\mathfrak{s}\in \mathrm{A}$, write $\ell_\mathfrak{s}^\vee$ for the element of $\textup{Hom}(\Lambda,\mathbb{Z})$ sending $\ell_\mathfrak{s}$ to $1$, and sending $\ell_{\mathfrak{s}'}$ to $0$ for each $\mathfrak{s}'\neq \mathfrak{s}$. Consider the element 
\[\phi= \Big\langle \sum_{\mathfrak{s}\in T\setminus \mathrm{B}}\ell_\mathfrak{s},-\Big\rangle +\sum_{\mathfrak{s}\in \mathrm{B}_0}\ell_\mathfrak{s}^\vee - \sum_{\mathfrak{s}\in \mathrm{B}_1}\ell_\mathfrak{s}^\vee ~~\in \textup{Hom}(\Lambda,\mathbb{Z}).\]
Then  we have $\left \langle \lambda,\lambda\right \rangle \equiv  \phi(\lambda) \textup{ (mod }2)$, as follows from \Cref{mod 2 pairing} upon noting that $\Lambda$ is by definition the collection of elements $\sum_{\mathfrak{s}}n_\mathfrak{s}\ell_\mathfrak{s} \in \Pi$ for which $\sum_{\mathfrak{s}\in \mathrm{B}}n_\mathfrak{s}=0$. Moreover, we have
\[F\phi -\phi=\Big \langle \sum_{\mathfrak{s}\in T\setminus \mathrm{B}}(\epsilon_{F^{-1}\mathfrak{s}}(F)-1)\ell_\mathfrak{s},-\Big \rangle.\]
Since $\sum_{\mathfrak{s}\in T\setminus B}(\epsilon_{F^{-1}\mathfrak{s}}(F)-1)\ell_\mathfrak{s}\in 2\Lambda$ we can apply \Cref{main_betts_dimension_rephrase} to $\phi$, giving
\begin{eqnarray*}
 \dim \mathfrak{B}_{C/K}[2] & \equiv &\phi\Bigg(\frac{1}{2}\sum_{\mathfrak{s}\in T\setminus \mathrm{B}}(\epsilon_{F^{-1}\mathfrak{s}}(F)-1)\ell_\mathfrak{s}\Bigg)\quad(\textup{mod }2)\\ 
 &\equiv & \#\{\mathfrak{s}\in T\setminus \mathrm{B}~~\colon~~\epsilon_\mathfrak{s}(F)=-1\}~~\quad(\textup{mod }2).
 \end{eqnarray*}
 This latter quantity is congruent to $N+1$ modulo $2$. Indeed, every element of  $\mathrm{B}\cap T$ has $\epsilon_\mathfrak{s}(F)=-1$ by assumption. Since moreover $|\mathrm{B}\cap T|$  is assumed odd, the claimed congruence follows. 
\end{proof}

\begin{remark}
Instead of appealing to \Cref{prop:betts_group_mod_2}, an alternative approach to proving \Cref{h1 pi computation} might be to draw on work of Betts \cite[Section 3]{MR4330930} (see also \cite[Section 10]{hyperuser}), which gives a description in terms of clusters for the individual Tamagawa numbers $c(J/L)$ and $c(J/K)$. From this   one might then hope to prove the result by computing explicitly the quotient $c(J/L)/c(J/K)$ and appealing to \eqref{betts_dokchitser_input}. However, the description of   Tamagawa numbers given in that work becomes sufficiently complicated in the presence of \ub~ clusters that we have elected to avoid this approach.
\end{remark}

 Before stating the final result of the section we require one further piece of notation.
 
 \begin{notation} \label{notat:etaC}
Define $\kappa(C)\in \{0,1\}$ as follows. We set $\kappa(C)=1$ if  $\mathcal{R}=\s_1\sqcup \s_2$ is a disjoint union of $2$ odd $G_k$-conjugate clusters $\mathfrak{s}_1$ and $\mathfrak{s}_2$ with both $\delta_{s_1}$ and $\delta_{s_2}$ odd (note in particular that this forces $C$ to have even genus). We set $\kappa(C)=0$ otherwise. 
 \end{notation}
 
\begin{cor} \label{mod 2 betts group cor}
We have
\[\dim \mathfrak{B}_{C/K}[2]+\epsilon(C/K)\equiv \kappa(C)+\#\Big\{G_k\textup{-orbits }O\subseteq T\textup{ with }\prod_{\mathfrak{t}\in O}\epsilon_\mathfrak{t}(F)=-1\Big\}~~\textup{(mod }2).\]
\end{cor}

\begin{proof}
Combine \Cref{h1 pi computation} with  \cite[Theorem 1.23]{DDMM18} (the cited result gives an explicit description of   deficiency in terms of clusters; to apply it recall that we have a running assumption that $\mathcal{R}$ has no cotwins).
\end{proof}

\section{Ramified quadratic twists of semistable hyperelliptic curves} \label{sec:ram_twist_hyp_curve}

We retain the notation and setup of the previous section. Thus $K$ is a nonarchimedean local field of odd residue characteristic, and $C/K:y^2=f(x)$ is a semistable hyperelliptic curve over $K$. We continue to impose \Cref{assump:no_cotwin}, so that $f(x)$ has even degree and the set $\mathcal{R}$ of roots of $f(x)$ in $\bar{K}$ has no cotwins in the sense of \Cref{def:cluster_terminology}.  Let $L/K$ be a ramified quadratic extension of $K$, and write $L=K(\sqrt{\pi})$ for some uniformiser $\pi\in K$. 

\subsection{The minimal regular model of $C^L$} \label{min_reg_model_twist_sec}
We now give an explicit `cluster picture' description of the special fibre of the minimal regular model of the quadratic twist $C^L:y^2=\pi f(x)$ of $C$ by $L$.  As we shall see, even though $C^L$ is no longer semistable over $K$, one can still give a simple description of its minimal regular model  in terms of clusters. To avoid confusion when comparing invariants of $C$ with invariants of $C^L$ later we will make the following convention.

\begin{convention} \label{where_are_clusters_convention}
Unless stated otherwise, in this section we will view all clusters as being associated to the polynomial $f(x)$ defining $C$, as opposed to the polynomial $\pi f(x)$ defining $C^L$. Since both the clusters themselves and the associated functions $d_\mathfrak{s}$ and $\delta_{s}$ (depth and relative depth) are functions purely of the set of roots $\mathcal{R}$, they are unchanged under replacing $f(x)$ by $\pi f(x)$. Thus the distinction here is irrelevant. However, for a cluster $\mathfrak{s}$, the functions $\nu_\mathfrak{s}$ and $\epsilon_\mathfrak{s}$ (see \eqref{deifnition_of_quantity_nu}, \Cref{definition:epsilon_signs})  are defined with reference to the leading coefficient of the polynomial in question, hence may change upon replacing $f(x)$ by $\pi f(x)$. For example, for a proper cluster $\mathfrak{s}$, $\nu_\mathfrak{s}$ is one larger when $\mathfrak{s}$ is viewed as a cluster for $C^L$ than when it is viewed as a cluster for $C$.  
\end{convention}

In several statements below we will need to distinguish the following special case. 

\begin{notation}  
We say that $\mathcal{R}$ is \textit{atypical} if $\mathcal{R}=\mathfrak{s}_1\sqcup \mathfrak{s}_2$ is a disjoint union of $2$ odd proper clusters $\mathfrak{s}_1$ and $\mathfrak{s}_2$, with both $\delta_{\mathfrak{s}_1}$ and $\delta_{\mathfrak{s}_2}$ odd. 
\end{notation}

The description of the special fibre of the minimal regular model of $C^L$ that we present below follows from work of Faraggi--Nowell \cite{MR4201122}, which more generally gives an explicit description of the special fibre of the minimal regular SNC model for hyperelliptic curves with tame reduction (that is, attaining semistable reduction after a tamely ramified extension of the base field). As is apparent from the statement of \Cref{dual_graph_semistable_twist_prop}  below, their description simplifies significantly for quadratic twists of semistable hyperelliptic curves.

\begin{remark}
For an alternative, but related, approach to constructing regular models of hyperelliptic curves over nonarchimedean local fields of odd residue characteristic, see the works of Srinivasan \cite{Srin2019} and Obus--Srinivasan \cite{ObSrin2019}. 
\end{remark}

In what follows we denote by $\mathcal{X}$ the minimal regular model of $C^L$ over $\mathcal{O}_K$, and denote by $\mathcal{X}_{\bar{k}}$ its special fibre, base-changed to $\bar{k}$.

\begin{notation}
In describing $\mathcal{X}_{\bar{k}}$ we will use the following terminology; see \cite[Definition 3.1]{MR4201122} for more details. By a chain of $n$ rational curves of multiplicity $d$, $n\geq 0$, $d\geq 1$, we mean a collection of irreducible components $\Gamma_1$, ..., $\Gamma_n$ of $\mathcal{X}_{\bar{k}}$, each isomorphic to $\mathbb{P}^1_{\bar{k}}$, such that $\Gamma_i$ intersects $\Gamma_{i+1}$   transversally for each $i$, and such that each $\Gamma_i$ has multiplicity $d$ in  $\mathcal{X}_{\bar{k}}$. We depict this situation below. By a crossed tail, we mean a chain of rational curves $\Gamma_1$, ..., $\Gamma_n$, along with $2$ additional irreducible components, the `crosses', both isomorphic to $\mathbb{P}^1_{\bar{k}}$ and intersecting $\Gamma_n$ transversally. Again, this situation is depicted below. In all the crossed tails we consider,  each $\Gamma_i$ has multiplicity $2$, whilst the crosses have multiplicity $1$.
\vspace{-10pt}
\begin{center}
\begin{figure}[!htbp]
\begin{minipage}{.65\textwidth}

\begin{center}
\phantom{spacespacesp}\begin{tikzpicture} [scale=0.94]
\draw [line width=1.2pt] (-8.9,-0.8)-- node[above, font=\small]{$\Gamma_1\qquad $} node[below, font=\small]{$d\quad$ } ++(1.5,0.9); 
\draw [line width=1.2pt] (-8,0.1)-- node[above]{} node[below, font=\small]{$d\quad $ } ++(1.5,- 0.9); 
\draw [line width=1.2pt] (-7.1,-0.8)-- node[above]{$d$} node[below, font=\small]{ } ++(1.5,0.9); 
\draw (-4.9,-0.1) node{$\cdots$};
\draw [line width=1.2pt] (-4,0.1)-- node[above]{$d$} node[below, font=\small]{ } ++(1.5,- 0.9); 
\draw [line width=1.2pt] (-3.1,-0.8)-- node[above, font=\small]{$\Gamma_n$} node[below, font=\small]{$\qquad d$ } ++(1.5,0.9); 
\draw (-5,-2) node{$\textup{Chain of }n\textup{ rational curves of multiplicity }d$};
\end{tikzpicture}
\end{center}
\end{minipage}%
\begin{minipage}{.35\textwidth}
\begin{center}
\phantom{spa}\begin{tikzpicture}[scale=0.5]
\draw  [line width=1.2pt] (2.7,2.3)-- node[above]{$  \Gamma_1~$} node[below, font=\small]{ } ++(-1.5,-2); 
\draw  [line width=1.2pt] (1.2,1)-- ++(1.5,-2); 
\draw  [line width=1.2pt] (1.2,-1.7)-- node[above, font=\small]{$\qquad \Gamma_n \quad $ } node[below, font=\small]{ } ++(1.5,-2); 
\draw (2.4,1.2) node{$2$};
\draw (1.8,-0.5) node{$2$};
\draw (1.3,-2.5) node{$2$};
\draw (2.05,-1.15) node{$\iddots$};
\draw (1.9,-4.6) node{$\textup{Crossed tail}$};
\draw [line width=1.2pt] (1.1,-3.7)-- node[above]{} node[below, font=\small]{ } ++(1.5,0.9); 
\draw [line width=1.2pt] (1.3,-4.2)-- node[above]{} node[below, font=\small]{ } ++(1.5,0.9); 
\end{tikzpicture}
\end{center}
\end{minipage}
\end{figure}
\end{center}
\end{notation}
\begin{proposition} \label{dual_graph_semistable_twist_prop} 
All irreducible components of $\mathcal{X}_{\bar{k}}$ intersect transversally, and no three components intersect at a point. Moreover:
\begin{itemize}
\item every principal cluster $\mathfrak{s}$ for $C$ contributes to $\mathcal{X}_{\bar{k}}$ a single component $\Gamma_\mathfrak{s}$ of genus $0$ and multiplicity $2$,
\item components corresponding to principal clusters $\mathfrak{s}'<\mathfrak{s}$ are linked by:  
\begin{itemize}
\item  a chain of $\frac{1}{2}\delta_{\mathfrak{s}'}$ rational curves of multiplicity $1$ if $\mathfrak{s}'$ is odd,
 \item  a chain of $(2\delta_{\mathfrak{s}'}-1)$ rational curves of multiplicity $2$ if $\mathfrak{s}'$ is even,
\end{itemize}
\item for $\mathfrak{s}$ principal, each twin  $\mathfrak{t}<\mathfrak{s}$ contributes   a crossed tail $T_\mathfrak{t}$ whose first component intersects $\Gamma_\mathfrak{s}$, and which consists of $2\delta_\mathfrak{t}$ rational curves of multiplicity $2$, with the crosses having multiplicity $1$,
\item if $\mathcal{R}=\mathfrak{s}_1\sqcup \mathfrak{s}_2$ and both $\mathfrak{s}_1$ and $\mathfrak{s}_2$ are odd, then  $\Gamma_{\mathfrak{s}_1}$ and $\Gamma_{\mathfrak{s}_2}$ are linked by a chain of $\frac{1}{2}(\delta_{\mathfrak{s}_1}+\delta_{\mathfrak{s}_2})$ rational curves of multiplicity $1$,
\item if $\mathcal{R}=\mathfrak{s}_1\sqcup \mathfrak{s}_2$ and both $\mathfrak{s}_1$ and $\mathfrak{s}_2$ are even, then  $\Gamma_{\mathfrak{s}_1}$ and $\Gamma_{\mathfrak{s}_2}$ are linked by a chain of $2\delta_{\mathfrak{s}_1}+2\delta_{\mathfrak{s}_2}-1$ rational curves of multiplicity $2$,
\item  for a principal cluster $\mathfrak{s}$, each child of size $1$, $\{r\}<\mathfrak{s}$ say,   contributes a single rational curve  $T_{r}$ of multiplicity $1$, intersecting $\Gamma_\mathfrak{s}$.
\end{itemize} 
\end{proposition}

\begin{proof}
This essentially follows from specialising \cite[Theorems 7.12 and 7.18]{MR4201122} to the case in hand, noting that the minimal regular SNC model coincides with the minimal regular model in this case (more precisely, the description in the statement of this proposition is the description of the minimal regular SNC model of $C^L$ obtained from  \cite[Theorems 7.12 and 7.18]{MR4201122}, and visibly has no exceptional curves in its special fibre). The only catch is that the statements of \cite[Theorems 7.12 and 7.18]{MR4201122} contain some minor errors and as such one does not quite recover the description of $\mathcal{X}_{\bar{k}}$ given above. A corrected version of these results appears in the PhD thesis of Nowell \cite[Theorems 9.23, 9.31 and 9.32]{NowellThesis}, from which one obtains the claimed statement. 

When applying the results cited above, recall that we are imposing \Cref{assump:no_cotwin}. The necessary invariants which form the required input for  \cite[Theorems 7.12 and 7.18]{MR4201122} and \cite[Theorems 9.23, 9.31 and 9.32]{NowellThesis} are described in \Cref{min_ref_model_invariants} below. 
\end{proof}

\begin{caution}
In the following lemma only, we view clusters as being associated to $\pi f(x)$ rather than $f(x)$, since it is invariants of the former which constitute the required input for the results of \cite{MR4201122}.
\end{caution}

See \cite[Table 3]{MR4201122} and the references therein for the definitions of the invariants appearing in the following lemma. Briefly, for a proper cluster $\mathfrak{s}$ for $C^L$, the quantities  $d_\mathfrak{s}$, $\nu_\mathfrak{s}$ and $\delta_{\mathfrak{s}}$ are as defined in \Cref{subsec_clusters_defs}, but with $\pi f(x)$ in place of $f(x)$. By definition we have $\lambda_{\mathfrak{s}}=\tfrac{1}{2}\nu_{\mathfrak{s}}-d_{\mathfrak{s}}\sum_{\mathfrak{s}'<\mathfrak{s}}\lfloor \frac{|\mathfrak{s}'|}{2} \rfloor$ (we caution that this is the function denoted $\tilde{\lambda}_{\mathfrak{s}}$ in \cite[Notation 1.19]{DDMM18}).  The quantity $e_\mathfrak{s}$ is the minimal integer such that both $e_\mathfrak{s}d_\mathfrak{s}\in \mathbb{Z}$ and $e_\mathfrak{s}\nu_{\mathfrak{s}}\in 2\mathbb{Z}$. When $\mathfrak{s}$ is even, the \textit{invariant} $\epsilon_{\mathfrak{s}}\in \{\pm 1\}$ in the statement is given by evaluating the \textit{function} $\epsilon_{\mathfrak{s}}$ of \Cref{definition:epsilon_signs} (which no longer factors through $G_k$ in general since $C^L/K$ is not semistable) at any topological generator of the tame inertia group of $K$. For our purposes we may take as a definition that $\epsilon_{\mathfrak{s}}=(-1)^{\nu_{\mathfrak{s}^*}-|\mathfrak{s}^*|d_{\mathfrak{s}^*}}$ for $\mathfrak{s}^*$ as in \Cref{notat:s_star_notation}. We will not use this variant of $\epsilon_\mathfrak{s}$ anywhere else in the paper. Recall also from \Cref{notat_T_set} the definition of the set $T$.  

\begin{lemma} \label{min_ref_model_invariants} 
Let $\mathfrak{s}$ be a proper cluster for $C^L$ (i.e. for the polynomial $\pi f(x)$). Then $\mathfrak{s}$ is fixed by the inertia group $I_K$ of $K$, and all of the following hold:
\begin{itemize}
\item[(i)] we have $d_\mathfrak{s}\in \mathbb{Z}$ unless $\mathfrak{s}\in T$, in which case $d_\mathfrak{s}\in 1/2+\mathbb{Z}$,
\item[(ii)]   $\nu_\mathfrak{s}$ is odd unless either $\mathfrak{s}=\mathcal{R}$ and $\mathcal{R}$ is atypical, or $\mathfrak{s}\in T$. In these cases, $\nu_\mathfrak{s}$ is even.
\item[(iii)] if $\mathfrak{s}$ is even then $\epsilon_\mathfrak{s}=-1$ unless $\mathfrak{s}=\mathcal{R}$ is atypical, in which case $\epsilon_\mathcal{\mathfrak{s}}=1$,
\item[(iv)] if $|\mathfrak{s}|\geq 3$ then $e_\mathfrak{s}=2$ unless $\mathfrak{s}=\mathcal{R}$ is atypical, in which case $e_\mathfrak{s}=1$,
\item[(v)] if $\mathfrak{s}$ is principal then $\lambda_\mathfrak{s}\in \frac{1}{2}+\mathbb{Z}$,
\item[(vi)] if $\mathfrak{s}'<\mathfrak{s}$ are principal clusters with $\mathfrak{s}'$ odd, then $\delta_{\mathfrak{s}'}$ is even.
\end{itemize}
\end{lemma}

\begin{proof}
As noted above, the proper clusters for $C^L$ and their associated depths are the same as those for $C$ (and whether or not a cluster $\mathfrak{s}$ is proper/principal/odd/even/\ub~ is similarly independent of whether we view $\mathfrak{s}$ as a cluster for $C$ or $C^L$). However, given a cluster $\mathfrak{s}$ for $C$, when we view it as a cluster for $C^L$ the quantity $\nu_\mathfrak{s}$ increases by $1$ since the leading coefficient of $\pi f(x)$ has valuation one greater than that of $f(x)$. All claims are now a formal consequence of \Cref{semi criterion}, which applies since $C:y^2=f(x)$ is semistable. Explicitly, the claim that each $I_K$-orbit of proper clusters has size $1$ is part (2) of \Cref{semi criterion}. Part (i) is \Cref{integer relative depths}. Part (ii) for $\mathfrak{s}\notin T$ is  \cite[Lemma 4.7]{DDMM18}, whilst for $\mathfrak{s}\in T$ this follows from \cite[Lemma C.5]{DDMM18}, combined with \cite[Lemma 4.7]{DDMM18} applied to the parent of $\mathfrak{s}$. Parts (iii) and (iv) follow  from parts (i) and (ii). Part (v) follows from (ii). Finally, for part (vi) see \cite[Lemma C.7]{DDMM18}.
\end{proof}

It is convenient to package the description of $\mathcal{X}_{\bar{k}}$ given in \Cref{dual_graph_semistable_twist_prop} in terms of the following graph.

\begin{notation}
Define $\mathcal{T}$ to be the graph consisting of one vertex for each irreducible component of $\mathcal{X}_{\bar{k}}$, with vertices $v$ and $v'$ joined by an edge if and only if the corresponding components intersect.  We give each vertex a weight $d_v\in \{1,2\}$ according to the multiplicity of the corresponding component in $\mathcal{X}_{\bar{k}}$.
\end{notation}

\begin{remark} \label{rem:leaves_of_dual_graph}
We see from \Cref{dual_graph_semistable_twist_prop} that $\mathcal{T}$ is a connected tree. Note that any vertex of $\mathcal{T}$ of degree at least $3$ has weight $2$. The leaves of $\mathcal{T}$ correspond to the components $\Gamma_{r}$ for $r\in \mathcal{R}$ not in a twin, along with the `crosses' on the crossed tails $T_\mathfrak{t}$ for twins $\mathfrak{t}$. In particular, $\mathcal{T}$ has $|\mathcal{R}|=2g+2$ leaves. Moreover, each leaf has weight $1$.
\end{remark}

\begin{example}[Ramified quadratic twist of good reduction]
Suppose $f(x)\in \mathcal{O}_{K}[x]$ is monic, has degree $2g+2$ for some $g\geq 2$, and is such that the reduction $f(x)$ $(\textup{mod }\pi)$ is separable. Then $C:y^2=f(x)$ has good reduction, and $C^L/K$ is the hyperelliptic curve $C^L:y^2=\pi f(x)$. We now use \Cref{dual_graph_semistable_twist_prop} to describe $\mathcal{X}_{\bar{k}}$ in this case. The assumptions mean that $f(x)$ has a single proper cluster, given by the full set of roots $\mathcal{R}$. This cluster has depth $0$ and has $2g+2$ children, with each individual root $r\in \mathcal{R}$ contributing a child $\{r\}<\mathcal{R}$ of size $1$. The cluster picture is thus the following:
\begin{center}
		\clusterpicture   
  \Root[A] {} {first} {r1};
  \Root[A] {} {r1} {r2};
  \Root[A] {} {r2} {r3};
  \Root[A] {} {r3} {r4};
  \Root[Dot] {} {r4}{r5};
  \Root[Dot] {} {r5}{r6};
  \Root[Dot] {} {r6}{r7};
   \Root[A] {} {r7}{r8};
    \Root[A] {} {r8}{r9};
  \ClusterLDName c4[][0][] = (r1)(r2)(r3)(r4)(r5)(r6)(r7)(r8)(r9);
\endclusterpicture 
\end{center}   
By  \Cref{dual_graph_semistable_twist_prop}, $\mathcal{X}_{\bar{k}}$  consists of one component $\Gamma_{\mathcal{R}}$ of genus $0$ and multiplicity $2$, intersected transversely by $2g+2$ rational curves of multiplicity $1$, one for each root $r\in \mathcal{R}$, as depicted below. The graph $\mathcal{T}$ consists of $2g+2$ vertices of weight $1$, each joined to a common vertex $v_{\mathcal{R}}$ of weight $2$, as shown below also. In the picture we do not label multiplicities/weights unless they are greater than $1$.
\begin{center}
\begin{figure}[!htbp]
\begin{minipage}{.5\textwidth}
\begin{center}
\begin{tikzpicture}[scale=0.5]
\draw  [line width=1pt] (0.3,2.3)-- ++(0,-1.3); 
\draw  [line width=1pt] (1,2.3)-- ++(0,-1.3); 
\draw  [line width=1pt] (1.7,2.3)-- ++(0,-1.3); 
\draw  [line width=1pt] (2.4,2.3)-- ++(0,-1.3); 
\draw  [line width=1pt] (4.4,2.3)-- ++(0,-1.3); 
\draw  [line width=1pt] (5.1,2.3)-- ++(0,-1.3); 
 \draw [line width=1pt] (0,2)-- node[above]{$\quad\qquad  $} node[below, font=\small]{} ++(5.4,0);
\draw (3.4,2.5) node{$2$};
\draw (3.4,1.4) node{$\cdots$};
\draw (2.7,-0.6) node{\scalebox{1.3}{$\mathcal{X}_{\bar{k}}$}};
\end{tikzpicture}
\end{center}
\end{minipage}%
\begin{minipage}{.5\textwidth}
\begin{center}
\begin{tikzpicture}[scale=0.4]
\draw (4,2.6) node {$2$};
\draw (4,-2) node{\scalebox{1.3}{$\mathcal{T}$}};
	\BlackVertices
\VertexLN[x=2.5,y=-0.2,L=\relax]{0}{};
\VertexLN[x=3.5,y=-0.8,L=\relax]{1}{};
	\VertexLN[x=1.5,y=0.4,L=\relax]{3}{};
\VertexLN[x=0.8,y=1,L=\relax]{2}{};
	\VertexLN[x=4,y=2,L=\relax]{4}{};
	 
	\VertexLN[x=6.5,y=0.5,L=\relax]{9}{ };  
\VertexLN[x=7.5,y=1,L=\relax]{10}{ };  
\draw (4.9,0) node {$\iddots$};
	\BlackEdges
\Edge(3)(4)
\Edge(0)(4)
\Edge(1)(4)
\Edge(2)(4)
\Edge(4)(10)
\Edge(4)(9)
\end{tikzpicture}
\end{center}
\end{minipage}
\end{figure}
\end{center}
\vspace{-10pt}
This description of $\mathcal{X}_{\bar{k}}$ is consistent with work of Sadek \cite[Theorem 3.7]{MR3264495}.
\end{example}

\begin{example} \label{ex:recurring_continued_example}
Take $C$ to be the semistable genus $2$  hyperelliptic curve over $\mathbb{Q}_3$ considered previously in    \Cref{unram_genus_2_ub_example,example_of_a_cluster_picture}, and take $L=\mathbb{Q}_3(\sqrt{3})$, so that $C^L$ is the curve
\[C^L/\mathbb{Q}_3:y^2=3(x^2+3)((x-i)^2-3^2)((x+i)^2-3^2)\]
for $i$ a square root of $-1$. As in \Cref{example_of_a_cluster_picture} the cluster picture is as shown:
\begin{center}
		\clusterpicture            
  \Root[A] {} {first} {r1};
  \Root[A] {} {r1} {r2};
  \Root[A] {6} {r2} {r3};
  \Root[A] {} {r3} {r4};
  \Root[A] {6} {r4} {r5};
  \Root[A] {} {r5} {r6};
  \ClusterLDName c1[][\tfrac{1}{2}][ ] = (r1)(r2);
    \ClusterLDName c2[][1][ ] = (r3)(r4);
      \ClusterLDName c3[][1][ ] = (r5)(r6);
  \ClusterLDName c4[][0][] = (c1)(c3)(c3);
\endclusterpicture 
\end{center}
As explained previously in  \Cref{example_of_a_cluster_picture}, the full set of roots $\mathcal{R}$ is the unique principal cluster, and (as shown in the picture) there are $3$ twins $\mathfrak{t}_1$, $\mathfrak{t}_2$ and $\mathfrak{t}_3$, with $\delta_{\mathfrak{t}_1}=1/2$ and $\delta_{\mathfrak{t}_2}=\delta_{\mathfrak{t}_3}=1$. 
 By  \Cref{dual_graph_semistable_twist_prop}, $\mathcal{X}_{\bar{k}}$ consists of one component $\Gamma_{\mathcal{R}}$ of multiplicity $2$,  along with $3$ crossed tails, as depicted below. The corresponding graph $\mathcal{T}$ is pictured also. 
\begin{center}
\begin{figure} [!htbp]
\begin{minipage}{.5\textwidth}
\begin{center}
\begin{tikzpicture}[scale=0.55]
\draw  [line width=1pt] (0.3,2.3)-- ++(0,-2.3); 
\draw  [line width=1pt] (2.7,2.3)-- ++(-1,-2); 
\draw  [line width=1pt] (5.1,2.3)-- ++(-1,-2); 
\draw  [line width=1pt] (1.7,0.8)-- ++(1,-2); 
\draw  [line width=1pt] (4.1,0.8)-- ++(1,-2); 
\draw [line width=1pt] (0,2)-- node[above]{$2\quad \quad  \quad\Gamma_{\mathcal{R}}$} node[below, font=\small]{} ++(5.4,0);
\draw (0,1.5) node{$2$};
\draw (2.6,1.4) node{$2$};
\draw (5,1.4) node{$2$};
\draw (2.4,0.2) node{$2$};
\draw (4.8,0.2) node{$2$};
\draw (2.7,-2.4) node{\scalebox{1.3}{$\mathcal{X}_{\bar{k}}$}};
\draw [line width=1pt] (-0.8,0.8)-- node[above]{} node[below, font=\small]{ } ++(1.5,0); 
\draw [line width=1pt] (-0.8,0.3)-- node[above]{} node[below, font=\small]{ } ++(1.5,0); 
\draw [line width=1pt] (1.4,-0.5)-- node[above]{} node[below, font=\small]{ } ++(1.5,0); 
\draw [line width=1pt] (1.4,-1)-- node[above]{} node[below, font=\small]{ } ++(1.5,0); 
\draw [line width=1pt] (3.8,-0.5)-- node[above]{} node[below, font=\small]{ } ++(1.5,0); 
\draw [line width=1pt] (3.8,-1)-- node[above]{} node[below, font=\small]{ } ++(1.5,0); 
\end{tikzpicture}
\end{center}
\end{minipage}%
\begin{minipage}{.5\textwidth}
\begin{center}
\begin{tikzpicture}[scale=0.45]
	\draw (3.5,0.2) node {$2$};
\draw (3.5,-1.1) node {$2$};
\draw (6,-1.1) node {$2$};
\draw (3.5,2.2) node {$2$};
\draw (1.5,1) node {$2$};
\draw (6.5,1) node {$2$};
\draw (4.1,2.5) node {$~ ~ v_\mathcal{R}$};
\draw (4,-3.3) node{\scalebox{1.3}{$\mathcal{T}$}};
	\BlackVertices
	\VertexLN[x=1,y=-0.5,L=\relax]{1}{};
	\VertexLN[x=2,y=-0.5,L=\relax]{2}{};
	\VertexLN[x=1.5,y=0.5,L=\relax]{3}{};
	\VertexLN[x=4,y=2,L=\relax]{4}{};
	\VertexLN[x=4,y=0.5,L=\relax]{5}{}; 
	\VertexLN[x=4,y=-1,L=\relax]{6}{}; 
	\VertexLN[x=3.5,y=-2,L=\relax]{7}{};
	\VertexLN[x=4.5,y=-2,L=\relax]{8}{};
	\VertexLN[x=6.5,y=0.5,L=\relax]{9}{ }; 
\VertexLN[x=6.5,y=-1,L=\relax]{10}{}; 
\VertexLN[x=6,y=-2,L=\relax]{11}{}; 
\VertexLN[x=7,y=-2,L=\relax]{12}{}; 
	\BlackEdges
	\Edge(1)(3)
\Edge(2)(3)
\Edge(3)(4)
\Edge(4)(5)
\Edge(5)(6)
\Edge(6)(7)
\Edge(6)(8)
\Edge(4)(9)
\Edge(9)(10)
\Edge(10)(11)
\Edge(10)(12)
\end{tikzpicture}
\end{center}
\end{minipage}
\end{figure}
\end{center}
\vspace{-10pt}
In particular, in the terminology of the Namikawa--Ueno classification \cite{MR0369362}, $C^L/K$ has type $I_{1-2-2}^*$.
\end{example}

Returning to the general case, we now describe the $G_k$-action on the set of irreducible components of $\mathcal{X}_{\bar{k}}$ (equivalently the induced $G_k$-action on $\mathcal{T}$). To do this we introduce the following notation.

\begin{notation} \label{gamma_t_notation}
For each twin  $\mathfrak{t}=\{r_1,r_2\}$ let $\eta_{\mathfrak{t}}\in K^{s \times}$ be a choice of square root of 
\[\frac{(r_1-r_2)^2}{(-\pi)^{2d_\mathfrak{t}}},\]
noting that $d_\mathfrak{t}=v(r_1-r_2)$ so that the displayed quantity is a unit (we can have $d_\mathfrak{t}\in \frac{1}{2}+\mathbb{Z}$, so we need not have a canonical choice of square root). In particular, $\eta_\mathfrak{t}\in \mathcal{O}_{K^{\textup{nr}}}^{\times}$.
Define the function 
$\gamma_{\mathfrak{t},L}:G_K\rightarrow \{\pm 1\}$
by the formula 
\[\gamma_{\mathfrak{t},L}(\sigma)=\frac{\sigma(\eta_{\mathfrak{t}})}{\eta_{\sigma\mathfrak{t}}}.\]
The function $\gamma_{\mathfrak{t},L}$ factors through $\textup{Gal}(K^{\textup{nr}}/K)$. Thus we view $\gamma_{\mathfrak{t},L}$ as a function on $G_k$ also. In particular, we can speak about $\gamma_{\mathfrak{t},L}(F)$ where $F\in G_k$ is the Frobenius element. The function $\gamma_{\mathfrak{t},L}$ may depend on the choice of square root $\eta_\mathfrak{t}$, but its restriction to the stabiliser of $\mathfrak{t}$ does not.
We remark that we include $L$ in the notation for  $\gamma_{\mathfrak{t},L}$ since, when $d_\mathfrak{t}\in 1/2+\mathbb{Z}$, it depends on  the class of the  uniformiser $\pi$ in $K^\times/K^{\times 2}$. 
\end{notation}

We stress that \Cref{where_are_clusters_convention} is in place, which is  relevant for the   function $\epsilon_{\mathfrak{s}}$.

\begin{proposition} \label{prop:frob_action_min_reg_twist}
Let $\sigma \in G_k$. The action of $\sigma$ on the set of irreducible components of $\mathcal{X}_{\bar{k}}$ is   determined by:
\begin{itemize}
\item for $\mathfrak{s}$ principal, the component $\Gamma_\mathfrak{s}$ is sent to $\Gamma_{\sigma\mathfrak{s}}$,
\item for each $r\in \mathcal{R}$ not in a twin, the component $\Gamma_r$ is sent to $\Gamma_{\sigma r}$,
\item for a twin $\mathfrak{t}$ with $\mathfrak{t}\notin T$, the crossed tail $T_\mathfrak{t}$ is sent to $\gamma_\mathfrak{t}(\sigma)T_{\sigma\mathfrak{t}}$,\footnote{Here $-T_\mathfrak{t}$ denotes the crossed tail $T_\mathfrak{t}$ with crosses swapped; strictly speaking we should fix a labelling $\pm$ of the crosses to pin down the action, and this choice is closely related to the choices of square root in \Cref{gamma_t_notation} above. However, it will only be relevant in what follows to know whether the stabiliser of a twin $\mathfrak{t}$ fixes or swaps the crosses on $T_{\mathfrak{t}}$, and for this we can safely ignore this subtlety.}
\item for a twin $\mathfrak{t}\in T$, the crossed tail $T_\mathfrak{t}$ is sent to $\epsilon_\mathfrak{t}(\sigma)\gamma_\mathfrak{t}(\sigma)T_{\sigma\mathfrak{t}}$.
\end{itemize}
\end{proposition}

\begin{proof}
This essentially follows from \cite[Theorem 7.21]{MR4201122}, however in some cases the Frobenius action is not correctly computed in that work. The argument given in loc. cit. applies to show that the action is as claimed in the first two bullet points. However, for a crossed tail  $T_\mathfrak{t}$ corresponding to a twin $\mathfrak{t}$,  the computation given there is incorrect. We explain now how to correctly compute the action in this case. 

Since $C$ is semistable over $K$, the curve $C^L$ becomes semistable over $L$. The results of \cite{DDMM18} then apply to give an explicit description of the minimal proper regular model $\widetilde{\mathcal{X}}/\mathcal{O}_{L^\textup{ur}}$ of $C^L/L^{\textup{ur}}$  in terms of clusters for $C^L$ and their associated invariants. In particular, equations for the components of the special fibre $\widetilde{\mathcal{X}}_{\bar{k}}$ can be read off from \cite[Proposition 5.20]{DDMM18}. By uniqueness of the minimal regular model, the full Galois group $G_K$ acts semilinearly on $\widetilde{\mathcal{X}}_{\bar{k}}$, and this action factors through $\textup{Gal}(L^{\textup{ur}}/K)$. This action is described explicitly in terms of clusters in \cite[Theorem 6.2]{DDMM18}. Writing $G=\textup{Gal}(L^{\textup{ur}}/K^{\textup{nr}})$, the quotient $\widetilde{\mathcal{X}}/G$ is an $\mathcal{O}_{K^{\textup{nr}}}$-model for $C^L$, closely related to its minimal regular model. In particular, as explained in the proof of \cite[Theorem 7.21]{MR4201122}, the $G_k$-action on the crossed tails of $\mathcal{X}_{\bar{k}}$ can be read off from the $G_k$-action on the singular points of the special fibre of $\widetilde{\mathcal{X}}/G$, which can in turn be calculated using the results of \cite{DDMM18} mentioned above. To carry out this calculation, fix a twin $\mathfrak{t}=\{r_1,r_2\}$. We take as a centre for $\mathfrak{t}$ the quantity $z_\mathfrak{t}:=\frac{r_1+r_2}{2}\in K^{\textup{nr}}$.  As described in \cite[Proposition 5.20]{DDMM18}, associated to $\mathfrak{t}$ is  the component $\Gamma_{\mathfrak{t}}$ of $\widetilde{\mathcal{X}}_{\bar{k}}$, given by the equation
\[\Gamma_{\mathfrak{t}}: y^2=c_\mathfrak{t}\Big(x^2-\frac{(r_1-r_2)^2}{4\pi^{2d_\mathfrak{t}}} \textup{mod~}\mathfrak{m} \Big) ~~\quad \textup{for }\quad c_\mathfrak{t}=\frac{c_f}{\pi^{v(c_f)}}\prod_{r\in \mathcal{R}\setminus \mathfrak{t}}\left(\frac{z_\mathfrak{t}-r}{\pi^{v(z_{\mathfrak{t}}-r)}}\right)~~\textup{mod~}\mathfrak{m}. \]
Here $\mathfrak{m}$ denotes the maximal ideal in $\mathcal{O}_{\bar{K}}$ and `$\textup{mod }\mathfrak{m}$' denotes reduction to the residue field $\bar{k}$. Recall that $2d_\mathfrak{t}\in \mathbb{Z}$ is odd if $\mathfrak{t}\in T$, and is even otherwise. Using the description of the invariant $\nu_\mathfrak{t}$ afforded by \Cref{min_ref_model_invariants} (ii), we see from \cite[Theorem 6.2]{DDMM18} that the generator $\tau$ of $G$ acts on $\Gamma_\mathfrak{t}$ as the automorphism
\[(x,y)\longmapsto \begin{cases}(x,-y)~~&~~\mathfrak{t}\notin T,\\ (-x,y) ~~&~~\mathfrak{t}\in T.\end{cases}\]
The relevant singular points of the special fibre of $\widetilde{\mathcal{X}}/G$ arise as the image under the quotient map of the fixed points of the action of $\tau$ on $\Gamma_{\mathfrak{t}}$. 
These are the points $P_\mathfrak{t}^{\pm}\in \Gamma_\mathfrak{t}$ given by 
\[P_\mathfrak{t}^{\pm}=\big(\pm \tfrac{1}{2} \overline{\eta_{\mathfrak{t}}} ,0\big)~\textup{ if }\mathfrak{t}\notin T\quad \textup{and} \quad P_\mathfrak{t}^{\pm}=\big(0, \pm \tfrac{1}{2} \overline{\eta_{\mathfrak{t}}}  \sqrt{c_\mathfrak{t}} \big)~\textup{ if }\mathfrak{t}\in T,\]
where here $\overline{\eta_\mathfrak{t}}:=\eta_\mathfrak{t}~\textup{mod }\mathfrak{m}$. 
Since the points $P_{\mathfrak{t}}^{\pm }$ are fixed by $G$, the action of $\textup{Gal}(L^{\textup{ur}}/K)$ on these points descends to an action of $\textup{Gal}(K^{\textup{nr}}/K)=G_k$, and appealing to \cite[Theorem 6.2]{DDMM18} once more to determine this action we see that $\sigma \in G_k$ sends $P_{\mathfrak{t}}^{\pm}\in \Gamma_{\mathfrak{t}}$ to the point on $\Gamma_{\sigma \mathfrak{t}}$ given by acting coordinatewise on the expression for $P^{\pm}_{\mathfrak{t}}$ given above.  Now the points $P_{\mathfrak{t}}^{\pm}$ can be identified with the `crosses' on the crossed tail $T_\mathfrak{t}$ (cf. proof of \cite[Theorem 7.21]{MR4201122}). We thus see that the action is as claimed upon noting that, after making compatible choices of square roots, we have $\epsilon_\mathfrak{t}(\sigma)=\sigma(\sqrt{c_{\mathfrak{t}}})/\sqrt{c_{\sigma \mathfrak{t}}}$ (to justify this final equality, see \cite[Lemma 6.7]{DDMM18} and the surrounding discussion). 
\end{proof}

\begin{remark} \label{rem:galois_action_on_leaves}
In \Cref{rem:leaves_of_dual_graph} we described the leaves of $\mathcal{T}$. From  \Cref{prop:frob_action_min_reg_twist} we see that leaves corresponding to roots $r\in \mathcal{R}$ not lying in a twin  are permuted by $G_k$ as the corresponding roots are. Further, let us temporarily denote by $\mathcal{S}$ the subset of $\mathcal{R}$ consisting of roots lying in twins $\mathfrak{t}$ with $\mathfrak{t}\notin T$, noting that $\mathcal{S}\subseteq \mathcal{R}\cap K^{\textup{nr}}$.  If we further denote by $\mathcal{L}$ the set of leaves corresponding to the `crosses' on the crossed tails $T_\mathfrak{t}$ for $\mathfrak{t}\notin T$, then we see from \Cref{prop:frob_action_min_reg_twist} that $\mathcal{L}$ and $\mathcal{S}$ are isomorphic as $G_k$-sets.  
\end{remark}

\begin{example}
Returning to \Cref{ex:recurring_continued_example}, for all $\sigma \in G_k$ we have $\epsilon_{\mathfrak{t}_1}(\sigma)=\epsilon_{\mathfrak{t}_2}(\sigma)=\epsilon_{\mathfrak{t}_3}(\sigma)=1$ (cf. \Cref{epsilon_as_in_example}). One checks that we may take the functions $\gamma_{\mathfrak{t}_1}$, $\gamma_{\mathfrak{t}_2}$ and $\gamma_{\mathfrak{t}_3}$ to be identically $1$ also. Finally, the Frobenius element in $G_k$ fixes $\mathfrak{t}_1$ but swaps $\mathfrak{t}_2$ and $\mathfrak{t}_3$. We thus see from \Cref{prop:frob_action_min_reg_twist} that $F$ fixes the crossed tail corresponding to $\mathfrak{t}_1$ (the leftmost one in the picture) and swaps the crossed tails corresponding to $\mathfrak{t}_2$ and $\mathfrak{t}_3$. Moreover,  $F^2$ acts trivially on the full set of components, hence  the stabiliser in $G_k$ of $\mathfrak{t}_i$, $i=2,3$, acts trivially on the crosses of the corresponding crossed tail.
\end{example}

\subsection{The Tamagawa number up to squares} 

We now use the description of $\mathcal{T}$  along with its $G_k$-action, afforded by \Cref{dual_graph_semistable_twist_prop,prop:frob_action_min_reg_twist}, to compute the Tamagawa number of $J^L/K$ up to rational squares. We begin by describing the order of the component group over $\bar{k}$.  

\begin{lemma} \label{eq:geometric_component_group_size}
We have $|\Phi(\bar{k})|=2^{2g}$.
\end{lemma}

\begin{proof}
Since $\mathcal{T}$ is a tree, \cite[Proposition 9.6.6]{MR1045822} gives
\[|\Phi(\bar{k})|=\prod_{v\in \mathcal{T}}d_v^{\textup{deg}(v)-2} =\frac{\prod_{v\in \mathcal{T}}2^{\textup{deg}(v)-2} }{\prod_{\substack{v\in \mathcal{T}\\d_v=1}}2^{\textup{deg}(v)-2} },
 \]
where $\textup{deg}(v)$ denotes the degree of the vertex $v$.
Since $\mathcal{T}$ is a connected tree we have 
\[\prod_{v\in \mathcal{T}}2^{\textup{deg}(v)-2}=2^{\sum_{v\in \mathcal{T}}(\textup{deg}(v)-2)}=1/4.\]
Since any vertex of $\mathcal{T}$ of degree at least $3$ has multiplicity $2$, we find 
\[\prod_{\substack{v\in \mathcal{T}\\d_v=1}}2^{\textup{deg}(v)-2} =2^{-\#\{\textup{leaves of }\mathcal{T}\}}=2^{-2g-2},\]
the second equality following from \Cref{rem:leaves_of_dual_graph}. 
\end{proof}

We now turn to computing the size of the $G_k$-invariants of $\Phi(\bar{k})$ up to rational squares, which we will do with the aid of \Cref{tam computations}. We remark that an alternative appraoch might be to use the recipe \cite[Section 4.2]{Srin2016} of Srinivasan  for computing the Tamagawa number of a curve in terms of its minimal proper regular model.

 In what follows it will be convenient to work exclusively with the graph $\mathcal{T}$. To facilitate this, we transfer the intersection pairing between the components of $\mathcal{X}_{\bar{k}}$ to a pairing on the vertices of $\mathcal{T}$. Since by \Cref{dual_graph_semistable_twist_prop} all components intersect transversally, this pairing has a simple combinatorial description.

\begin{defi}
 For vertices $v$ and $v'$ of $\mathcal{T}$ define 
\[v\bullet v'=\begin{cases}0~~&~~v\textup{ not adjacent to }v',\\
1~~&~~v\neq v' ~ \textup{ and }v,v'\textup{ adjacent,}\\
-\frac{1}{d_v}\sum_{w\neq v}d_w ~v\bullet  w~~&~~v=v'. \end{cases}\]
Note that if vertices $v,v'$ of $\mathcal{T}$ correspond to components $\Gamma_v$ and $\Gamma_{v'}$ respectively, then $v\bullet v'$ is the intersection number between $\Gamma_v$ and $\Gamma_{v'}$. We extend this product bilinearly to the $\mathbb{Q}$-vector space $V$ with basis the vertices of $\mathcal{T}$. 
\end{defi}

\begin{notation}
For $v\in \mathcal{T}$ denote by $r_v$ the size of the $G_k$-orbit of $v$. If $r_v$ is even   write 
\[\epsilon_v=v-Fv+...-F^{r_v-1}v~\in V.\]
We now define a matrix $M$ with rows and columns indexed by the even length $G_k$-orbits of vertices of $T$ as follows. For each even length $G_k$-orbit $O$, pick a representative $v_O\in O$. Then the $(O,O')$-entry of $M$ is defined as 
\[M_{O,O'}=\frac{1}{r_{v_O}}\epsilon_{v_O}\bullet \epsilon_{v_{O'}}.\] 
\end{notation}

The relevance of the above definitions is that, by \Cref{tam computations}, we have 
\begin{equation} \label{eq:tam_combination_ram_twist}
|\det M|\equiv 2^{\epsilon(C^L/K)}\cdot \frac{|\Phi(\bar{k})|}{|\Phi(k)|} ~~(\textup{mod}~\mathbb{Q}^{\times 2}).
\end{equation}

\begin{proposition} \label{tam_twist_good_case}
Suppose that either $\mathcal{R}$ is a principal cluster, or $\mathcal{R}=\mathfrak{s}_1\sqcup \mathfrak{s}_2$ is a disjoint union of two proper clusters $\mathfrak{s}_1$ and $\mathfrak{s}_2$ which are not swapped by $G_k$. Then 
\[|\det M |=2^{\# \{\textup{even sized }G_k\textup{-orbits of leaves of }\mathcal{T}\}}.\]
\end{proposition}

We begin with a lemma, which  is a variant of \cite[Lemma 9.6.7]{MR1045822}.

\begin{lemma} \label{matrix_tree_lemma}
Let $\mathbb{T}$ be a rooted tree with root $R$. Let $N$ be a matrix with rational coefficients whose rows and columns are indexed by the vertices of $\mathbb{T}$. Suppose that $N_{v,v'}= 0$ unless either $v=v'$ or $v$ and $v'$ are adjacent in $\mathbb{T}$. Further, suppose that all rows of $N$ sum to 0, save possibly the row corresponding to the root $R$. Then
\[\textup{det}N=\left(\prod_{v\in \mathbb{T},v\neq R}-N_{v,v_{\textup{parent}}}\right)\left(\sum_{v\in \mathbb{T}}N_{R,v}\right)\]
where here for a vertex $v\neq R$ of $\mathbb{T}$, $v_{\text{parent}}$ denotes the parent of $v$ in $\mathbb{T}$ (that is, the vertex adjacent to $v$ on the unique path in $\mathbb{T}$ from $v$ to the root $R$). 
\end{lemma}

\begin{proof}
The strategy of proof is the same as that of \cite[Lemma 9.6.7]{MR1045822}, and is by induction on $n=|\mathbb{T}|$. If $n=1$ the result is clear, so assume $n>1$. Let $v\neq R$ be a leaf of $\mathbb{T}$ and order the vertices of $\mathbb{T}$ so that $v$ is the first vertex, and its parent $v'$ is the second (the determinant is independent of the ordering of vertices, this is just to enable us to write down the matrix explicitly). The   assumptions on $N$ mean that it has the form
\[
N=\left(\begin{array}{ccccc}
-N_{v,v'} & N_{v,v'} & 0 & 0 & 0\\N_{v',v} & N_{v',v'} & * & * & *\\
0 & * & * & * & *\\
0 & * & * & * & *\\
0 & * & * & * & *
\end{array}\right).
\]
If $N_{v,v'}=0$ then $\text{det}N$ is as claimed, so suppose $N_{v,v'}\neq 0$.
Adding column 1 to column 2 and then adding $\frac{N_{v',v}}{N_{v,v'}}\cdot (\textup{row }1)$  to row 2 does not change the determinant, and transforms the matrix above into  the matrix
\[
\left(\begin{array}{ccccc}
-N_{v,v'} & 0 & 0 & 0 & 0\\
0 & N_{v',v'}+N_{v',v} & * & * & *\\
0 & * & * & * & *\\
0 & * & * & * & *\\
0 & * & * & * & *
\end{array}\right).
\]
Here all entries indicated by a $*$ remain unchanged from the corresponding entries of $N$. Let $\widetilde{N}$ be the matrix obtained by removing the first row and column from this matrix, so that $\text{det}N=-N_{v,v'}\text{det}\widetilde{N}$. Letting $\widetilde{\mathbb{T}}$ be the rooted tree obtained  from $\mathbb{T}$ by removing the leaf $v$ (with root equal to the root $R$ of $\mathbb{T}$) we see that $\widetilde{N}$ satisfies the hypothesis of the statement with respect to $\widetilde{\mathbb{T}}$. By induction 
\[\text{det}N=-N_{v,v'}\text{det}\widetilde{N}=-N_{v,v'}\Bigg(\prod_{v\in \widetilde{\mathbb{T}},v\neq R}-N_{v,v_{\text{parent}}}\Bigg)\Bigg(\sum_{v\in \mathbb{T}}N_{R,v}\Bigg), \]
as desired.
\end{proof}

\begin{proof}[Proof of \Cref{tam_twist_good_case}]
If $\mathcal{R}$ is principal, denote by $R$ the vertex of $\mathcal{T}$ corresponding to the component $\Gamma_\mathcal{R}$. If $\mathcal{R}=\mathfrak{s}_1\sqcup \mathfrak{s}_2$ denote by $R$ the vertex of $\mathcal{T}$ corresponding to $\Gamma_{\mathfrak{s}_1}$. In either case, $R$ is fixed by $G_k$ and $d_R=2$.  We view $\mathcal{T}$ as a rooted tree with root $R$. For a vertex $v\neq R$ we denote by $P(v)$ the parent of $v$ in $\mathcal{T}$. We say that $v$ is a child of a vertex $w$ if $w=P(v)$.   

Now take a vertex $v$ of $\mathcal{T}$ with $r_v$ even, noting that this forces $v\neq R$.  If $v'$ is a child of $v$ then $r_{v}$ divides $r_{v'}$. In particular, $r_{v'}$ is even also. In this case we write $r_{v'}=m_{v'}r_v$, noting that $m_{v'}$ is the number of vertices in the $G_k$-orbit of $v'$ having parent $v$. One then computes $\epsilon_{v}\bullet \epsilon_{v'}=m_{v'}r_v=r_{v'}$, so we have
\begin{equation} \label{intersection_prod_a}
\frac{1}{r_v}\epsilon_v\bullet \epsilon_{v'}=m_{v'}\quad\textup{ and }\quad\frac{1}{r_{v'}}\epsilon_{v'}\bullet\epsilon_v=1.
\end{equation}
Moreover, we have $\epsilon_v\bullet \epsilon_v=r_v v\bullet v$, giving
\begin{equation} \label{intersection_prod_b}
\frac{1}{r_v}\epsilon_v\bullet\epsilon_v=-\frac{d_{P(v)}}{d_v}-\frac{1}{d_v}\sum_{v'\textup{ child of }v}d_{v'}.
\end{equation}

To make use of these computations, pick compatibly a representative for each even sized $G_k$-orbit of vertices in $\mathcal{T}$ in such a way that if $v$ is picked, then for each $G_k$-orbit containing a child of $v$, the chosen representative of that orbit is itself a child of $v$. The subgraph of $\mathcal{T}$ generated by all chosen representatives is a finite disjoint union of connected trees, $\mathcal{T}_1,...,\mathcal{T}_s$ say. Each $\mathcal{T}_i$ is naturally a rooted tree, with root the unique vertex of $\mathcal{T}_i$ closest to $R$, and we extend the notion of child/parent to $\mathcal{T}_i$. We caution however that we reserve the notation $P(v)$ for the parent of a vertex $v$  in the tree  $\mathcal{T}$. Now for $1\leq i \leq s$, define $N_i$ to be the matrix whose rows and columns are indexed by the vertices of $\mathcal{T}_i$, and such that the $(v,v')$-entry of $N_i$ is given by 
\begin{equation} \label{N_i_formula}
(N_i)_{v,v'}=\frac{d_vd_{v'}}{r_v}\epsilon_v\bullet\epsilon_{v'}=\begin{cases} d_vd_{v'}~~&~~v\textup{ a child of }v'\textup{ in }\mathcal{T}_i,\\ m_{v'}d_vd_{v'}~~&~~v\textup{ parent of }v'\textup{ in }\mathcal{T}_i, \\ -d_vd_{P(v)}-\sum_{w\textup{ child of }v\textup{ in }\mathcal{T}_i}m_wd_vd_w~~&~~v=v',\\ 0~~&~~\textup{otherwise},\end{cases}
\end{equation}
the second equality following from  \eqref{intersection_prod_a} and \eqref{intersection_prod_b}.
By construction we have 
\begin{equation*}
|\det M |=\prod_{i=1}^s\Big(|\det N_i |\prod_{v\in \mathcal{T}_i}d_v^{-2}\Big).
\end{equation*}
Applying \Cref{matrix_tree_lemma}  to each of the matrices $N_i$ we find
\begin{equation} \label{det_m_preliminary}
|\det M |=\prod_{i=1}^s\prod_{v\in \mathcal{T}_i}\frac{d_{P(v)}}{d_v}.
\end{equation}

\textbf{Claim:} For each $1\leq i\leq s$ we have 
\[\prod_{v\in \mathcal{T}_i}\frac{d_{P(v)}}{d_v}=\frac{d_{P(R_i)}}{2}2^{\#\{\textup{leaves of }\mathcal{T}\textup{ appearing in }\mathcal{T}_i\}}.\] 

\begin{proof}[Proof of claim]
First suppose that $\mathcal{T}_i$ consists of the single vertex $R_i$. Then $R_i$ is necessarily a leaf in $\mathcal{T}$, hence has weight $1$ and parent (in $\mathcal{T}$) of weight $2$. Thus the formula holds in this case.

Now assume that $\mathcal{T}_i$ consists of at least $2$ vertices, and for a vertex $v$ in $\mathcal{T}_i$  let $\textup{deg}_{\mathcal{T}_i}(v)$ denote the degree of $v$ when viewed as a vertex of $\mathcal{T}_i$ (as opposed  to a vertex of $\mathcal{T}$). Note that a vertex $v\neq R_i$ of $\mathcal{T}_i$ is the parent of $(\textup{deg}_{\mathcal{T}_i}(v)-1)$-many vertices of $\mathcal{T}_i$, whilst $R_i$ is the parent of $\textup{deg}_{\mathcal{T}_i}(v)$-many vertices of $\mathcal{T}_i$. Consequently, we have 
\[\prod_{v\in \mathcal{T}_i}\frac{d_{P(v)}}{d_v}=d_{R_i}d_{P(R_i)}\prod_{v\in \mathcal{T}_i}d_v^{\textup{deg}_{\mathcal{T}_i}(v)-2}=d_{R_i}d_{P(R_i)} \frac{ \prod_{v\in \mathcal{T}_i}2^{\textup{deg}_{\mathcal{T}_i}(v)-2}}{ \prod_{\substack{v\in \mathcal{T}_i \\ d_v=1}}2^{\textup{deg}_{\mathcal{T}_i}(v)-2} }.\]
Since $\mathcal{T}_i$ is a connected tree we have 
\[\prod_{v\in \mathcal{T}_i}2^{\textup{deg}_{\mathcal{T}_i}(v)-2}=2^{\sum_{v\in \mathcal{T}_i}(\textup{deg}_{\mathcal{T}_i}(v)-2)}=1/4.\]
On the other hand, if $v\in \mathcal{T}_i$ has $\textup{deg}_{\mathcal{T}_i}(v)\geq 3$ then $v$ necessarily has degree at least $3$ when viewed as a vertex of $\mathcal{T}$. It then follows from the description of $\mathcal{T}$ afforded by \Cref{dual_graph_semistable_twist_prop} that $d_v=2$.
All together, this gives 
\begin{equation} \label{almost_proved_tree_claim}
\prod_{v\in \mathcal{T}_i}\frac{d_{P(v)}}{d_v}=\frac{d_{R_i}d_{P(R_i)}}{4}2^{\#\{v\in \mathcal{T}_i~~\colon~~\textup{deg}_{\mathcal{T}_i}(v)=1,~d_{v}=1 \}}.
\end{equation}
Under the assumption that $\mathcal{T}_i$ has at least $2$ vertices, we see that a vertex $v\in \mathcal{T}_i$ is a leaf in $\mathcal{T}$ if and only if $\textup{deg}_{\mathcal{T}_i}(v)=1$  and $v\neq R_i$. When this is the case, $v$ necessarily has weight $1$. This observation combined with \eqref{almost_proved_tree_claim} proves the claim (note that if $\deg_{\mathcal{T}_i}(R_i)>1$ then, since $R_i\neq R,$ we see that $R_i$ must have degree at least $3$ in $\mathcal{T}$, hence weight $2$). 
\end{proof}
Returning to the proof of the proposition, note that the number of leaves of $\mathcal{T}$ which appear in some $\mathcal{T}_i$ is precisely the number of even sized $G_k$-orbits of leaves of $\mathcal{T}$. Combining the claim with \eqref{det_m_preliminary} thus gives 
\begin{equation} \label{almost_there_tam_twist}
|\det M|=2^{\# \{\textup{even sized }G_k\textup{-orbits of leaves of }\mathcal{T}\}}\cdot \prod_{i=1}^s\frac{d_{P(R_i)}}{2}.
\end{equation}
For each $1\leq i\leq s$ the parent of $R_i$ lies in an odd sized $G_k$-orbit and has a child lying in an even sized $G_k$-orbit. In particular, either $R_i=R$ or $R_i$ has degree at least $3$ in $\mathcal{T}$. Either way, $d_{R_i}=2$ and the result follows. 
 \end{proof}

In the remaining case, when $\mathcal{R}=\mathfrak{s}_1\sqcup \mathfrak{s}_2$ is a disjoint union of $2$ principal clusters swapped by $G_k$, the result is the following. 

\begin{proposition} \label{prop:bad_cases_tree_tamagawa}
Suppose that $\mathcal{R}=\mathfrak{s}_1\sqcup \mathfrak{s}_2$ is a disjoint union of $2$ principal clusters $\mathfrak{s}_1$ and $\mathfrak{s}_2$ that are swapped by $G_k$. Then
\[|\det M |=\begin{cases} 2^{\# \{\textup{even sized }G_k\textup{-orbits of leaves of }\mathcal{T}\}}~~&~~g\textup{ odd, or }g\textup{ even and }\mathcal{R}\textup{ not atypical,}\\ \frac{1}{2}\cdot 2^{ \# \{\textup{even sized }G_k\textup{-orbits of leaves of }\mathcal{T}\}}~~&~~g\textup{ even and } \mathcal{R}\textup{ atypical.}\end{cases}\]
\end{proposition}
 
 \begin{proof}
 We indicate how to adapt the proof of  \Cref{tam_twist_good_case} to these cases.
 
  First suppose that $g$ is odd. Then both $\mathfrak{s}_1$ and $\mathfrak{s}_2$ are even. The vertices of $\mathcal{T}$ corresponding to $\Gamma_{\mathfrak{s}_1}$ and $\Gamma_{\mathfrak{s}_2}$ are joined by a path consisting of an odd number of vertices of multiplicity $2$. Let $R$ be the middle vertex in this path, which is fixed by $G_k$ and has multiplicity $2$. With this definition of $R$, the proof of \Cref{tam_twist_good_case} applies verbatim to give the desired result.
  
  Now suppose $g$ is even, so that $\mathfrak{s}_1$ and $\mathfrak{s}_2$ are both odd. Since $\mathfrak{s}_1$ and $\mathfrak{s}_2$ are swapped by $G_k$ we have $\delta_{\mathfrak{s}_1}=\delta_{\mathfrak{s}_2}$, and the vertices of $\mathcal{T}$ corresponding to $\Gamma_{\mathfrak{s}_1}$ and $\Gamma_{\mathfrak{s}_2}$ are joined by a path consisting of $\delta_{\mathfrak{s}_1}$ vertices of weight $1$. If $\mathcal{R}$ is atypical then this path consists of an odd number of vertices, and we take as a root $R$ the middle vertex in this path. This is fixed by $G_k$ and has weight $1$. Following the proof of \Cref{tam_twist_good_case}, \eqref{almost_there_tam_twist} becomes 
  \begin{equation} \label{almost_there_tam_twist}
|\det M|=2^{\# \{\textup{even sized }G_k\textup{-orbits of leaves of }\mathcal{T}\}}\cdot  \frac{d_{R}}{2}.
\end{equation}
To see this, note that every vertex of $\mathcal{T}$ other than $R$ has an even sized $G_k$-orbit, so that $s=1$ in the proof of \Cref{tam_twist_good_case}. The result now follows immediately.

Finally, suppose that $g$ is even but that $R$ is not atypical, so that the vertices of $\mathcal{T}$ corresponding to $\Gamma_{\mathfrak{s}_1}$ and $\Gamma_{\mathfrak{s}_2}$ are joined by a path consisting of a (positive since $\delta_{\mathfrak{s}_1}\geq 1$) even number of vertices. Let $R_1$ and $R_2$ be the middle vertices on this path, noting that they have degree $2$ in $\mathcal{T}$ and weight $1$. Note that every vertex of $\mathcal{T}$ has an even sized $G_k$-orbit. As in the proof of  \Cref{tam_twist_good_case}, we compatibly pick a representative for each (even sized) $G_k$-orbit of vertices in $\mathcal{T}$, starting with $R_1$, and in such a way that if $v$ is picked  then, for each $G_k$-orbit containing a child of $v$, the chosen representative of the orbit is itself a child of $v$. Let $\mathcal{T}_1$ be the subtree of $\mathcal{T}$ generated by the chosen vertices. This is a connected tree and we take $R_1$ as a root for $\mathcal{T}_1$. As in the proof of \Cref{tam_twist_good_case}, define $N_1$ to be the matrix whose rows and columns are indexed by the vertices of $\mathcal{T}_1$ and such that the $(v,v')$-entry of $N_1$ is given by $(N_1)_{v,v'}=\frac{d_vd_{v'}}{r_v}\epsilon_v\bullet\epsilon_{v'}$. One then has $|\det M|=|\det N_1|\cdot \prod_{v\in \mathcal{T}_1}d_v^{-2}$. This time, the formula \eqref{N_i_formula} is valid provided $(v,v')\neq (R_1,R_1)$. Noting that $\epsilon_{R_1}=R_1-R_2$, one computes
\[(N_1)_{R_1,R_1}=\frac{d_{R_1}^2}{2}(R_1-R_2)\cdot (R_1-R_2)=-2d_{R_1}^2-\sum_{v\textup{ child of }R_1 \textup{ in }\mathcal{T}_1}m_vd_vd_{R_1}.\]
Again, the matrix $N_1$ satisfies the conditions of \Cref{matrix_tree_lemma}, and the row corresponding to $R_1$ sums to $2d_{R_1}^2$. Thus 
\[|\det M|=2\cdot \prod_{v\in \mathcal{T}_1,~v\neq R_1}\frac{d_{P(v)}}{d_v}.\]
Now $\mathcal{T}_1$ consists of at least $2$ vertices, $R_1$ has degree $1$ in $\mathcal{T}_1$ and weight $1$, and leaves of $\mathcal{T}_1$ other than $R_1$ correspond bijectively to (necessarily even sized) $G_k$-orbits of leaves in $\mathcal{T}$, all of which have weight $1$. Arguing as in the claim in the proof of \Cref{matrix_tree_lemma} now gives the result.
 \end{proof}
 
 Recall from \Cref{notat:etaC} that we set $\kappa(C)=1$ if   $\mathcal{R}=\s_1\sqcup \s_2$ is a disjoint union of two odd $G_k$-conjugate clusters with both $\delta_{s_1}$ and $\delta_{s_2}$ odd, and set $\kappa(C)=0$ otherwise. Putting everything together we obtain the following.

\begin{cor} \label{main_ram_quad_twist_cor}
We have 
\begin{eqnarray*}
\epsilon(C^L/K)+\textup{ord}_2c(J^L/K)&\equiv &\kappa(C)+\#\Big\{\textup{even-sized }G_k\textup{-orbits on }  \mathcal{R}\cap K^{\textup{nr}} \Big\}+\\&& \#\Big\{G_k\textup{-orbits }O\subseteq T\textup{ with }\prod_{\mathfrak{t}\in O}\epsilon_{\mathfrak{t}}(F)\gamma_{\mathfrak{t},L}(F)=-1\Big\}~~(\textup{mod }2).
\end{eqnarray*}
\end{cor}

\begin{proof}
With the matrix $M$ defined as above, combining \Cref{eq:geometric_component_group_size} with  \eqref{eq:tam_combination_ram_twist} gives 
\[\epsilon(C^L/K)+\textup{ord}_2c(J^L/K)\equiv \textup{ord}_2\det M~~(\textup{mod }2).\]
 \Cref{tam_twist_good_case,prop:bad_cases_tree_tamagawa} then give 
 \begin{equation}\label{eq:intermediate_equation_ram_twists}
 \textup{ord}_2\det M \equiv \kappa(C)+\# \{\textup{even sized }G_k\textup{-orbits of leaves of }\mathcal{T}\}~~(\textup{mod }2).
 \end{equation}
 As in \Cref{rem:leaves_of_dual_graph} there are $2g+2$ leaves of $\mathcal{T}$. By    \Cref{rem:galois_action_on_leaves}, the leaves corresponding to elements $r\in \mathcal{R}$, together with leaves arising as the crosses on the crossed tails $T_\mathfrak{t}$ for twins $\mathfrak{t}\notin T$, form a $G_k$-set isomorphic to $\mathcal{R}\cap K^{\textup{nr}}$. These leaves give rise to the second term on the right hand side of the statement. The remaining leaves arise as the crosses on the crossed tails $T_\mathfrak{t}$ corresponding to twins $\mathfrak{t}\in T$.  Using \Cref{prop:frob_action_min_reg_twist} once again we see that each $G_k$-orbit $O\subseteq T$ gives rise to a single even sized $G_k$-orbit of leaves if $\prod_{\mathfrak{t}\in O}\epsilon_{\mathfrak{t}}(F)\gamma_{\mathfrak{t},L}(F)=-1$, and either $0$ or $2$  such orbits  if $\prod_{\mathfrak{t}\in O}\epsilon_{\mathfrak{t}}(F)\gamma_{\mathfrak{t},L}(F)=1$ (according to whether $|O|$ is odd or even). This gives the result.
\end{proof}


\section{Proof of Proposition 11.1} \label{completion_ram_quad_odd_res}

For this section we take $K$ to be a non-archimedean local field of odd residue characteristic, take $L=K(\sqrt{\pi})$ to be a ramified quadratic extension of $K$, and let $C/K$ be a semistable hyperelliptic curve. We now combine the results of  Sections \ref{min reg model ramified}-\ref{sec:ram_twist_hyp_curve} to prove \Cref{prop:ram_cases_of_conjecture}. For convenience, we recall the statement.

\begin{proposition}[=\Cref{prop:ram_cases_of_conjecture}] \label{later_ramified_cases}
 \Cref{Kramer Tunnell} holds for $C$ and $L/K$. 
\end{proposition}

\begin{proof}
By \Cref{odd degree extension}, we are at liberty to replace $K$ with an arbitrarily large odd-degree unramified extension. In particular, we can without loss of generality assume that $C/K$ is given by an equation of the form $y^2=f(x)$ where $f(x)$ satisfies  \Cref{assump:no_cotwin} (see \Cref{can_reduce_to_no_cotwins_remark} for a justification of this). This allows us to use the results of \Cref{sec:hyp_betts_group_calculation} and \Cref{sec:ram_twist_hyp_curve} which were proven under this simplifying assumption.

Combining \Cref{mod 2 betts group cor} and  \Cref{main_ram_quad_twist_cor} with \eqref{equation_norm_as_tamag_product}  and  \eqref{betts_dokchitser_input} gives 
 \begin{eqnarray*}
w(J/L)\cdot(-1)^{\epsilon(C/K)+\epsilon(C^L/K)+\dim J(K)/\N J(L)}&=&(-1)^{\#\big\{\textup{even-sized }G_k\textup{-orbits on }  \mathcal{R}\cap K^{\textup{nr}} \big\}}\\&&\cdot(-1)^{\#\big\{G_k\textup{-orbits }O\subseteq T\textup{ with }\prod_{\mathfrak{t}\in O}\gamma_{\mathfrak{t},L}(F)=-1\big\}}.
\end{eqnarray*}
Here we are using the notation of \Cref{sec:clusters,sec:ram_twist_hyp_curve}, so that $\mathcal{R}$ denotes the set of roots of $f(x)$ in $K^s$, the set $T$ is as defined in \Cref{notat_T_set}, and the signs $\gamma_{\mathfrak{t},L}$ are as defined in \Cref{gamma_t_notation}.
To prove  \Cref{Kramer Tunnell} we see that it suffices to establish the equality
\begin{equation} \label{conj_number_of_orbs_disc}
(\Delta_C,L/K)\stackrel{?}{=}(-1)^{\#\big\{\textup{even-sized }G_k\textup{-orbits on }  \mathcal{R}\cap K^{\textup{nr}} \big\} +\#\big\{G_k\textup{-orbits }O\subseteq T\textup{ with }\prod_{\mathfrak{t}\in O}\gamma_{\mathfrak{t},L}(F)=-1\big\}}.
\end{equation}
Recall from \Cref{integer relative depths} that we have $\cup_{\mathfrak{t}\in T}\mathfrak{t}=\mathcal{R}\setminus \mathcal{R}\cap K^{\textup{nr}}$, and that each $\mathfrak{t}=\{r_{\mathfrak{t},1},r_{\mathfrak{t},2}\}$ is an intertia-orbit of roots of $f(x)$. In particular, we can factor $f(x)$ over $K$ as a product 
\[f(x)=f_{\textup{nr}}(x) \cdot \prod_{O\in T/G_k}f_O(x)\] where $f_{\textup{nr}}(x)\in K[x]$ splits over $K^{\textup{nr}}$ and where, for a $G_k$-orbit $O\subseteq T$, we have
\[f_O(x)=\prod_{\mathfrak{t}\in O}(x-r_{\mathfrak{t},1})(x-r_{\mathfrak{t},2})~~\in K[x].\]
In what follows, for a polynomial $g(x)$ we write $\Delta_{g}$ for its discriminant. 
From the above factorisation we find
\[(\Delta_C,L/K)=(\Delta_{f_{\textup{nr}}},L/K)\cdot \prod_{O\in T/G_k}(\Delta_{f_O},L/K).\]
This  follows from the fact that, for coprime polynomials $h_1(x)$, $h_2(x)\in K[x]$, we have $\Delta_{h_1h_2}=\Delta_{h_1}\Delta_{h_2}\textup{Res}(h_1,h_2)^2$ where $\textup{Res}(h_1,h_2)\in K^{\times}$ denotes the resultant of $h_1(x)$ and $h_2(x)$. 

Since $L/K$ is ramified whilst $f_\textup{nr}(x)$ splits over an unramified extension, we see that $(\Delta_{f_0},L/K)=1$ if and only if $\Delta_{f_{\textup{nr}}}$ is a square in $K$, which in turn happens if and only if the Frobenius element $F\in G_k$ acts as an even permutation of the roots of $f_\textup{nr}(x)$. Thus we have 
\begin{equation*}(\Delta_{f_\textup{nr}},L/K)=(-1)^{ \#\big\{\textup{even-sized }G_k\textup{-orbits on }  \mathcal{R}\cap K^{\textup{nr}} \big\}}.
\end{equation*}
To conclude, we claim that for each $G_k$-orbit $O\subseteq T$ we have $(\Delta_{f_O},L/K)=\prod_{\mathfrak{t}\in O}\gamma_{\mathfrak{t},L}(F)$. 
Indeed, from the definition of $\gamma_{\mathfrak{t},L}$ given in \Cref{gamma_t_notation}, we see that $\prod_{\mathfrak{t}\in O}\gamma_{\mathfrak{t},L}(F)$ is equal to $1$ if and only if the quantity 
 \[\prod_{\mathfrak{t}\in O} (r_{\mathfrak{t},1}-r_{\mathfrak{t},2})^2 (-\pi)^{-2d_\mathfrak{t}} ~~\in \mathcal{O}_{K}^{\times}\] 
 is a square in $K$. We thus have 
\[\prod_{\mathfrak{t}\in O}\gamma_{\mathfrak{t},L}(F)=\Big(\prod_{\mathfrak{t}\in O} (r_{\mathfrak{t},1}-r_{\mathfrak{t},2})^2 (-\pi)^{-2d_\mathfrak{t}} ,L/K\Big)=\Big(\prod_{\mathfrak{t}\in O} (r_{\mathfrak{t},1}-r_{\mathfrak{t},2})^2,L/K\Big),\]
where for the second equality we note that $-\pi$ is a norm from $L=K(\sqrt{\pi})$ (recall that, since $\mathfrak{t}\in T$, the quantity $2d_\mathfrak{t}$ is an odd integer). For $\mathfrak{t}\neq \mathfrak{t}'\in O$ write $R(\mathfrak{t},\mathfrak{t}')=\prod_{r\in \mathfrak{t}, r'\in \mathfrak{t}'}(r-r')$, noting that this quantity lies in $K^{\textup{nr}}$ and that $R(\mathfrak{t},\mathfrak{t}')=R(\mathfrak{t}',\mathfrak{t})$. Then we have 
\[\Delta_{f_O}=\prod_{\mathfrak{t}\in O}(r_{\mathfrak{t},1}-r_{\mathfrak{t},2})^2\cdot \prod_{\{\mathfrak{t},\mathfrak{t}'\}\subseteq O}R(\mathfrak{t},\mathfrak{t}')^2,\]
where the second product runs over all unordered pairs of distinct elements of $O$. The   product $\prod_{\{\mathfrak{t},\mathfrak{t}'\}\subseteq O}R(\mathfrak{t},\mathfrak{t}')$ is visibly fixed by $G_K$, hence lies  in $K$. We conclude that $\Delta_{f_O}$ and $\prod_{\mathfrak{t}\in O}(r_{\mathfrak{t},1}-r_{\mathfrak{t},2})^2$ are congruent modulo squares in $K^\times$, proving the claim.
 \end{proof}
 
\section{Residue characteristic 2}   \label{residue characteristic 2}

In this section we consider \Cref{Kramer Tunnell} when $K$ is a finite extension of $\mathbb{Q}_2$ and when the quadratic extension $L/K$ is ramified.  Let $C/K$ be a hyperelliptic curve with Jacobian $J$.
We suppose henceforth that $J/K$ has good ordinary reduction. Let $J(K)_1$ denote the kernel of reduction on $J(K)$, and define $J(L)_1$ similarly. We begin by considering the norm map from $J(L)_1$ to $J(K)_1$. 

\begin{lemma} \label{Lubin lemma}
We have
\[\left|J(K)_1/\N  J(L)_1 \right|=\left|J(K)_1[2]\right|.\]
\end{lemma}

\begin{proof}
Let $G=\text{Gal}(L/K)\cong \mathbb{Z}/2\mathbb{Z}$. Let $g$ be the genus of $C$ so that by \cite[Theorem 1]{MR0491735}, there is a   matrix $u\in \textup{Mat}_g(\mathbb{Z}_2)$  (the $\textit{twist matrix}$ associated to the formal group of $J$) such that 
\[J(K)_1/\N J(L)_1\cong G^g/(1-u)G^g.\]
Moreover, denoting by $T$ the completion  of $K^\text{nr}$ we  have (see \cite[Lemma]{MR0491735})  \[J(K)_1\cong \big\{\alpha \in \left(\mathcal{O}_T^\times\right)^g~:~F\alpha =u \alpha \big\},\]
where $F$ denotes the Frobenius automorphism of $T$. In particular, we have
\[J(K)_1[2]\cong \big\{\alpha \in \left\{\pm1\right\}^g~:~(1-u)\alpha =1\big\}.\]
Identifying the groups $G$ and $\{\pm1\}$ in the obvious way, $J(K)_1[2]$ is identified with the kernel of multiplication by $1-u$ on $G^g$. We now conclude by noting that the cokernel and kernel of an endomorphism of a finite abelian group have the same order.  
\end{proof}

\begin{lemma} \label{ordinary norm size}
Suppose that $K(J[2])/K$ has odd degree. Then we have
\[\textup{dim}  J(K)/\N J(L) \equiv 0 ~~\textup{(mod 2)}.\]
\end{lemma}  

\begin{proof}

  \Cref{odd degree extension} reduces to the case  $K(J[2])=K$. In this case, we claim that 
 \[\textup{dim} J(K)/\N J(L) =2g.\]
 To see this, consider the commutative diagram with exact rows
\[
\xymatrix{0\ar[r] & J(L)_1\ar[r]\ar[d]^{\N} & J(L)\ar[r]\ar[d]^{\N} & \tilde{J}(k)\ar[r]\ar[d]^{2} & 0\\
0\ar[r] & J(K)_1\ar[r] & J(K)\ar[r] & \tilde{J}(k)\ar[r] & 0,
}
\]
\[
\]
where $\tilde{J}/k$ denotes the special fibre of the N\'{e}ron model of $J$.
The assumption that all 2-torsion is defined over $K$ means that reduction to the special fibre is a surjection from $J(K)[2]$ to $\tilde{J}(k)[2]$. In particular, in the exact sequence arising from applying the snake lemma to the diagram above, the connecting homomorphism is trivial. Thus the sequence
\[0\longrightarrow J_1(K)/\N J_1(L)\longrightarrow J(K)/\N J(L)\longrightarrow \tilde{J}(k)/2\tilde{J}(k)\longrightarrow 0\]
is short exact. As $J$ is ordinary (and  all its  2-torsion is defined over $K$), we have
\[\big| \tilde{J}(k)/2\tilde{J}(k)\big|=\big|\tilde{J}(k)[2]\big|=2^g.\]
On the other hand, by \Cref{Lubin lemma} we have 
\[\big|J(K)/\N J(L)\big|=\big|J(K)_1[2]\big|=2^g\]
also, from which the result follows.
\end{proof}

\begin{cor} \label{good ordinary case}
Suppose that $K(J[2])/K$ has odd degree. Then \Cref{Kramer Tunnell} holds for $C/K$ and the extension $L/K$.  
\end{cor} 

\begin{proof}
Again by  \Cref{odd degree extension} we can assume that all the $2$-torsion of $J$ is defined over $K$. Under this assumption $f(x)$ splits over $K$, so $(\Delta_C,L/K)=1$. Similarly, both $C$ and $C^L$ have a $K$-rational Weierstrass point,  so $\epsilon(C/K)+\epsilon(C^L/K)=0$. By \Cref{ordinary norm size} we have $(-1)^{\text{ord}_2J(K)/\N J(L)}=1$, and e.g. by \cite[Proposition 3.23]{MR2534092} we have $w(J/L)=1$.
\end{proof}

For the purpose of giving examples we now describe how to construct hyperelliptic curves over $\mathbb{Q}$ whose Jacobians are good ordinary over $\mathbb{Q}_2$ and have all their $2$-torsion defined over an odd degree extension on $\mathbb{Q}_2$. Let $g\geq 2$ be an integer.

\begin{lemma} \label{constructing ordinary curves in char 2}
Let $f(x)\in\bar{\mathbb{F}}_2[x]$ be a monic separable polynomial of degree $g+1$ and let $h(x)\in \bar{\mathbb{F}}_2[x]$ be a polynomial of degree $\leq g$, coprime to $f(x)$. Then the hyperelliptic curve
\[C/\bar{\mathbb{F}}_2:y^2-f(x)y=h(x)f(x)\]
is ordinary.
\end{lemma}

\begin{proof}
One readily checks that the equation defining $C$ is smooth, hence defines a hyperelliptic curve over $\bar{\mathbb{F}}_2$ of genus $g$. Let $J$ be the Jacobian of $C$. As in the proof of \cite[Theorem 23]{MR3290950}  one sees that $\dim J(\bar{\mathbb{F}}_2)[2]=g$, hence $J$ is ordinary.
\end{proof}
 
\begin{lemma} \label{explicit ordinary cor}
Suppose $f(x)\in \mathbb{Z}[x]$ has odd leading coefficient and degree $g+1$, and suppose that  $f(x)$ $\textup{(mod}~ \textup{2)}$ is separable with each irreducible factor having odd degree. Further, let $h(x)\in \mathbb{Z}[x]$ have degree $\leq g$ be such that  $h(x)$ $\textup{(mod}~ \textup{2)}$ is coprime to $f(x)$ $\textup{(mod}~ \textup{2)}$. Then the Jacobian $J$ of the hyperelliptic curve
\[C: y^2=f(x)(f(x)+4h(x))\]
has good ordinary reduction over $\mathbb{Q}_2$. Moreover,  $\mathbb{Q}_2(J[2])/\mathbb{Q}_2$ has odd degree.
\end{lemma}

\begin{proof}
A change of variables over $\mathbb{Q}_2$ brings $C$ into the form $y^2-f(x)y=h(x)f(x)$, so $J$ has good ordinary reduction over $\mathbb{Q}_2$ by \Cref{constructing ordinary curves in char 2}. Moreover, both $f(x)$ and $f(x)+4h(x)$ reduce to separable polynomials over $\mathbb{F}_2$ whose irreducible factors have odd degree. It follows from Hensel's lemma that $f(x)(f(x)+4h(x))$ splits over an odd degree unramified extension of $\mathbb{Q}_2$, hence $\mathbb{Q}_2(J[2])/\mathbb{Q}_2$ has odd degree (and is unramified).
\end{proof}


\section{Proof of Theorems 1.1 and 1.8} \label{sec:main_thm_proofs}

We have now established enough cases of \Cref{Kramer Tunnell} to deduce \Cref{cases of the parity conjecture,thm:cases_of_kramer_tunnell}. For completeness, we explain these deductions below.

\begin{proof}[Proof of \Cref{thm:cases_of_kramer_tunnell}]
The case where $K$ is nonarchimedean is \Cref{kramer-tunnell for reals places}. The case where the quadratic extension is unramified is \Cref{prop:unram_cases_of_conjecture}. For the remaining cases combine Proposition \ref{prop:ram_cases_of_conjecture} (=\Cref{later_ramified_cases}) and Proposition \ref{good ordinary case}. These deal, respectively, with ramified extensions in odd residue characteristic, and ramified extensions in residue characteristic $2$.
\end{proof}

\begin{proof}[Proof of \Cref{cases of the parity conjecture}]
As explained in \Cref{subsec:red_to_local}, this follows by combining \Cref{selmer decomposition_intro} (=\Cref{selmer decomposition}) with \Cref{thm:cases_of_kramer_tunnell}.
\end{proof}


\begin{thebibliography}{CFKSABCD}
 

\bibitem[AW67]{MR0219512}
 M. F. Atiyah and C. T. C. Wall, \textit{Cohomology of groups}, Algebraic {N}umber {T}heory ({P}roc. {I}nstructional {C}onf., {B}righton, 1965), 94--115.
 
%
    
\bibitem[BBB$^\textup{+}$]{hyperuser}
A. J. Best,  L. A.  Betts, M. Bisatt,  R. van Bommel,  V. Dokchitser,  O. Faraggi,  S. Kunzweiler,  C. Maistret,  A. Morgan,  S. Muselli,  and  S. Nowell, \textit{A user's guide to the local arithmetic of hyperelliptic curves}, to appear in Bulletin of the London Mathematical Society.
 
  \bibitem[BD19]{MR3933907}
 L. A.  Betts and  V. Dokchitser, \textit{Variation of {T}amagawa numbers of semistable abelian
              varieties in field extensions}, Math. Proc. Cambridge Philos. Soc.  \textbf{166} (2019), no. 3, 487--521. With an appendix by V. Dokchitser and A. Morgan.
 
  \bibitem[Bet22]{MR4330930}
 L. A.  Betts, \textit{Variation of {T}amagawa numbers of {J}acobians of
              hyperelliptic curves with semistable reduction}, J. Number Theory \textbf{231} (2022), 158--213.  
 

\bibitem[BL99]{MR1717533}
 S.\,Bosch and Q. Liu,  \textit{Rational points of the group of components of a {N}\'eron
              model}, Manuscripta Math. \textbf{98} (1999), no. 3, 275--293. 
 
  
\bibitem[BLR90]{MR1045822}
 S.\,Bosch, W.\,L\"utkebohmert, M.\,Raynaud, 
\textit{N\'eron Models},
Erg. Math., vol. 21, Springer, Berlin, 1990.
 
 
 \bibitem[Cas67]{MR0222054}
 J. W. S. Cassels, \textit{Global fields}, Algebraic {N}umber {T}heory ({P}roc. {I}nstructional {C}onf.,
              {B}righton, 1965), 1967, 42--84. 
 
  \bibitem[\v{C}es16]{MR3552491}
 K. \v{C}esnavi\v{c}ius, \textit{The {$p$}-parity conjecture for elliptic curves with a
              {$p$}-isogeny}, J. Reine Angew. Math. \textbf{719} (2016), 45--73.
              
  \bibitem[\v{C}es18]{MR3860395}
 K. \v{C}esnavi\v{c}ius, \textit{The {$\ell$}-parity conjecture over the constant quadratic
              extension}, Math. Proc. Cambridge Philos. Soc. \textbf{165} (2018), no. 3, 385--409.  
              
  \bibitem[\v{C}I16]{MR3577895}
 K. \v{C}esnavi\v{c}ius and N. Imai, \textit{The remaining cases of the {K}ramer-{T}unnell conjecture},
 Compos. Math. \textbf{152} (2016),  no. 11, 2255--2268.                
 
\bibitem[CFKS10]{MR2551757}
J. Coates, T. Fukaya, K. Kato, R. Sujatha,
\textit{Root numbers, Selmer groups and non-commutative Iwasawa theory},
J. Alg. Geom. \textbf{19} (2010), 19--97.

  \bibitem[Cor01]{MR1865865}
  G. Cornelissen, \textit{Two-torsion in the {J}acobian of hyperelliptic curves over
              finite fields}, Arch. Math. (Basel) \textbf{77} (2001), no. 3, 241--246.
              
              
  \bibitem[Cor05]{MR2169307}
  G. Cornelissen, Erratum to: `\textit{Two-torsion in the {J}acobian of hyperelliptic curves over
              finite fields}', Arch. Math. (Basel) \textbf{85} (2005), no. 6, loose erratum.
              
 \bibitem[CST14]{MR3290950}
  W. Castryck, M.  Streng, and D. Testa,  \textit{Curves in characteristic 2 with non-trivial 2-torsion}, Adv. Math. Commun. \textbf{8} (2014), no. 4, 479--495. 
 
  \bibitem[DD09]{MR2534092}
  T. Dokchitser  and V. Dokchitser, \textit{Regulator constants and the parity conjecture}, Invent. Math. \textbf{178} (2009), no. 1, 23--71.
  
  
  \bibitem[DD10]{MR2680426}
  T. Dokchitser  and V. Dokchitser, \textit{On the {B}irch-{S}winnerton-{D}yer quotients modulo squares}, Ann. of Math. (2) \textbf{172} (2010), no. 1, 567--596.  
  
    \bibitem[DD11]{MR2831512}
  T. Dokchitser  and V. Dokchitser, \textit{Root numbers and parity of ranks of elliptic curves}, J. Reine Angew. Math. \textbf{658} (2011), 39--64. 
  
      \bibitem[DDMM18]{DDMM18}
  T. Dokchitser, V. Dokchitser, C. Maistret and A. Morgan, \textit{Arithmetic of hyperelliptic curves over local fields},  To appear in Mathematische Annalen. 
  
  \bibitem[DM19]{DM19}
 V. Dokchitser and C. Maistret, \textit{Parity conjecture for abelian surfaces}, Preprint arxiv: 1911.04626 (2019).
 
   \bibitem[FN20]{MR4201122}
O. Faraggi and S. Nowell, \textit{Models of hyperelliptic curves with tame potentially
              semistable reduction}, Trans. London Math. Soc. \textbf{7} (2020), no. 1, 49--95.
              
   \bibitem[Fon85]{MR807070}
   J.-M. Fontaine, \textit{Il n'y a pas de vari\'et\'e ab\'elienne sur {${\bf Z}$}}, Invent. Math. \textbf{81} (1985), no. 3, 515--538.
  
    \bibitem[GM21]{GM21}
H. Green and C. Maistret, \textit{$2$-parity conjecture for elliptic curves with isomorphic $2$-torsion}, Preprint arxiv: 2110.06718 (2021).

\bibitem[Gru67]{MR0225922}
 K. Gruenberg, \textit{Profinite groups}, Algebraic {N}umber {T}heory ({P}roc. {I}nstructional {C}onf.,
              {B}righton, 1965), 1967, 116--127. 
              
 \bibitem[KMR13]{MR3043582}
 Z. Klagsbrun, B. Mazur, and K. Rubin, \textit{Disparity in {S}elmer ranks of quadratic twists of elliptic
              curves}, Ann. of Math. (2) \textbf{178} (2013), no. 1, 287--320.
              
\bibitem[Kra81]{MR597871}
K. Kramer, \textit{Arithmetic of elliptic curves upon quadratic extension}, Trans. Amer. Math. Soc. 264 (1981), no. 1, 121--135.

\bibitem[KT82]{MR664648}
K. Kramer and J. Tunnell, \textit{Elliptic curves and local {$\varepsilon $}-factors}, Compositio Math. \textbf{46} (1982), no. 3, 307--352.   

\bibitem[Lic69]{MR242831}
S. Lichtenbaum, \textit{Duality theorems for curves over {$p$}-adic fields}, Invent. Math. \textbf{7} (1969), 120--136.

\bibitem[Liu02]{MR1917232}
Q. Liu,  \textit{Algebraic geometry and arithmetic curves}, Oxford Graduate Texts in Mathematics, vol. 6, Oxford University Press, Oxford, 2002. Translated from the French by Reinie Ern{\'e},
              Oxford Science Publications.
              
 \bibitem[Liu94a]{MR1302311}
Q. Liu,  \textit{Conducteur et discriminant minimal de courbes de genre {$2$}},  Compositio Math. \textbf{94} (1994), no. 1, 51--79.

 \bibitem[Liu94b]{MR1285783}
Q. Liu, \textit{Mod\`{e}les minimaux des courbes de genre deux}, J. Reine Angew. Math.  \textbf{453} (1994), 137--164. 

 \bibitem[Liu96]{MR1363944}
Q. Liu, \textit{Mod\`eles entiers des courbes hyperelliptiques sur un corps de
              valuation discr\`{e}te}, Trans. Amer. Math. Soc. \textbf{348} (1996), no. 11, 4577--4610.
              
   \bibitem[Lor11]{MR2961846}
D. Lorenzini, \textit{Torsion and {T}amagawa numbers}, Ann. Inst. Fourier (Grenoble) \textbf{61} (2011), no. 5, 1995--2037.

   \bibitem[LR78]{MR0491735}
 J.  Lubin and M. I. Rosen, \textit{The norm map for ordinary abelian varieties}, J. Algebra \textbf{52} (1978), no. 1, 236--240.
 
    \bibitem[Maz72]{MR0444670}
B. Mazur, \textit{Rational points of abelian varieties with values in towers of
              number fields}, Invent. Math. \textbf{18} (1972), 183--266.
              
 \bibitem[MR07]{MR2373150}
  B. Mazur and K. Rubin, \textit{Finding large {S}elmer rank via an arithmetic theory of local
              constants}, Ann. of Math. (2) \textbf{166} (2007), no. 2, 579--612.
              
 \bibitem[MRS07]{MR2331769}
  B. Mazur, K. Rubin, and A. Silverberg,  \textit{Twisting commutative algebraic groups}, J. Algebra \textbf{314} (2007), no.1, 419--438. 
   
              
 \bibitem[Mil72]{MR0330174}
J. S. Milne, \textit{On the arithmetic of abelian varieties}, Invent. Math. \textbf{17} (1972), 177--190. 

 \bibitem[Mil86]{MR861974}
J. S. Milne, \textit{Abelian varieties}, Arithmetic geometry ({S}torrs, {C}onn., 1984), 1986, 103--150.

 \bibitem[Mor15]{AMo15}
A. Morgan, \textit{$2$-Selmer parity for Jacobians of hyperelliptic curves in quadratic extensions}, PhD Thesis, University of Bristol (2015).

 \bibitem[Mor19]{MR3951582}
A. Morgan, \textit{Quadratic twists of abelian varieties and disparity in
              {S}elmer ranks},  Algebra \& Number Theory \textbf{13} (2019), no. 4, 839--899.
              
 \bibitem[Mum71]{Mum71}
 D. Mumford, \textit{Theta characteristics of an algebraic curve}, Ann. Sci. de l'École Norm. Sup. \textbf{4} (1971),  181-192.
              
 \bibitem[NU73]{MR0369362}
Y. Namikawa and K. Ueno, \textit{The complete classification of fibres in pencils of curves of
              genus two}, Manuscripta Math. \textbf{9} (1973), 143--186.
              
\bibitem[Nek13]{MR3101073}
J. Nekov{\'a}{\v{r}}, \textit{Some consequences of a formula of {M}azur and {R}ubin for
              arithmetic local constants}, Algebra \& Number Theory \textbf{7} (2013), no. 5, 1101--1120. 
 
  \bibitem[Now22]{NowellThesis}
S. Nowell, \textit{Models of hyperelliptic curves over $p$-adic fields}, PhD Thesis, University College London (2022).

 \bibitem[OS19]{ObSrin2019}
P. Srinivasan, \textit{Conductor-discriminant inequality for hyperelliptic curves in odd residue characteristic}, Preprint arxiv: 	1910.02589 (2019) 

\bibitem[Ogg66]{MR0201437}
 A. P. Ogg, \textit{On pencils of curves of genus two}, Topology \textbf{5} (1966), 355--362.
 
 \bibitem[PR11]{MR2915483}
  B. Poonen and E. Rains, \textit{Self cup products and the theta characteristic torsor}, Math. Res. Lett. \textbf{18} (2011), no. 6, 1305--1318.
 
  \bibitem[PR12]{MR2833483}
  B. Poonen and E. Rains, \textit{Random maximal isotropic subspaces and {S}elmer groups},   J. Amer. Math. Soc. \textbf{25} (2012), no. 1, 245--269.
  
    \bibitem[PS97]{MR1465369}
  B. Poonen and E. F. Schaefer, \textit{Explicit descent for {J}acobians of cyclic covers of the
              projective line}, J. Reine Angew. Math. \textbf{488} (1997), 141--188. 
              
 \bibitem[PS99]{MR1740984}
  B. Poonen and M. Stoll,  \textit{The {C}assels-{T}ate pairing on polarized abelian varieties}, Ann. of Math. (2) \textbf{150} (1999), no. 3, 1109--1149. 
  
 \bibitem[Ray71]{MR0427326}
M. Raynaud,   \textit{Vari\'{e}t\'{e}s ab\'{e}liennes et g\'{e}om\'{e}trie rigide}, Actes du {C}ongr\`{e}s {I}nternational des {M}ath\'{e}maticiens
              ({N}ice, 1970), {T}ome 1, 1971, 473--477.
              
 \bibitem[Rei61]{MR0122892}
I. Reiner, \textit{The {S}chur index in the theory of group representations}, Michigan Math. J. \textbf{8} (1961), 39--47. 

 \bibitem[Sab07]{MR2309184}
M. Sabitova, \textit{Root numbers of abelian varieties}, Trans. Amer. Math. Soc. \textbf{359} (2007), no. 9, 4259--4284.

 \bibitem[Sad14]{MR3264495}
M. Sadek, \textit{On quadratic twists of hyperelliptic curves}, Rocky Mountain J. Math. \textbf{44} (2014), no. 3, 1015--1026.

 \bibitem[Sch96]{MR1370197}
E. F. Schaefer, \textit{Class groups and {S}elmer groups}, J. Number Theory \textbf{56} (1996), no. 1, 79--114.

 \bibitem[Ser79]{MR554237}
J.-P. Serre, \textit{Local fields}, Graduate Texts in Mathematics, vol. 67, Springer-Verlag, New York-Berlin, 1979. Translated from the French by M. J. Greenberg.
   
\bibitem[SGA7$_{\rm I}$]{SGA7IX}
A.\,Grothendieck, 
\textit{Mod\`eles de N\'eron et monodromie},
SGA7-I, Expose IX, LNM 288, Springer, 1972.

 \bibitem[Sri16]{Srin2016}
P. Srinivasan, \textit{Invariants linked to models of curves over discrete valuation rings}, PhD Thesis, Massachusetts Institute of Technology (2016).

 \bibitem[Sri19]{Srin2019}
P. Srinivasan, \textit{Conductors and minimal discriminants of hyperelliptic curves: a comparison in the tame case}, Preprint arxiv: 1910.08228 (2019) 
 

\end{thebibliography}
\end{document}